\newcommand{\E}{\mathbb{E}}
\newcommand{\prob}{\mathbb{P}}
\newtheorem{proposition}{Proposition}
\newtheorem{lemma}{Lemma}
\newtheorem{assumption}{Assumption}
\newtheorem{corollary}{Corollary}
   \newtheoremstyle{TheoremNum}
        {\topsep}{\topsep}              
        {\itshape}                      
        {}                              
        {\bfseries}                     
        {.}                             
        { }                             
        {\thmname{#1}\thmnote{ \bfseries #3}}
    \theoremstyle{TheoremNum}
    \newtheorem{rprop}{Proposition}
   \newtheoremstyle{TheoremNum}
        {\topsep}{\topsep}              
        {\itshape}                      
        {}                              
        {\bfseries}                     
        {.}                             
        { }                             
        {\thmname{#1}\thmnote{ \bfseries #3}}
    \theoremstyle{TheoremNum}
    \newtheorem{rcoro}{Corollary}
\title{Predict-then-Calibrate: A New Perspective of Robust Contextual LP}
\author{Chunlin Sun$^*$, Linyu Liu$^*$, Xiaocheng Li}
\date{\small 
Institute for Computational and Mathematical Engineering, Stanford University, chunlin@stanford.edu\\
Department of Automation, Tsinghua University, ly-liu19@mails.tsinghua.edu.cn
\\
Imperial College Business School, Imperial College London,  xiaocheng.li@imperial.ac.uk}
\begin{document}
\maketitle

\onehalfspacing
\let\thefootnote\relax\footnotetext{$^*$ Equal contribution.}

\begin{abstract}
  Contextual optimization, also known as predict-then-optimize or prescriptive analytics, considers an optimization problem with the presence of covariates (context or side information). The goal is to learn a prediction model (from the training data) that predicts the objective function from the covariates, and then in the test phase, solve the optimization problem with the covariates but without the observation of the objective function. In this paper, we consider a risk-sensitive version of the problem and propose a generic algorithm design paradigm called predict-then-calibrate. The idea is to first develop a prediction model without concern for the downstream risk profile or robustness guarantee, and then utilize calibration (or recalibration) methods to quantify the uncertainty of the prediction. While the existing methods suffer from either a restricted choice of the prediction model or strong assumptions on the underlying data, we show the disentangling of the prediction model and the calibration/uncertainty quantification has several advantages. First, it imposes no restriction on the prediction model and thus fully unleashes the potential of off-the-shelf machine learning methods. Second, the derivation of the risk and robustness guarantee can be made independent of the choice of the prediction model through a data-splitting idea. Third, our paradigm of predict-then-calibrate applies to both (risk-sensitive) robust and (risk-neutral) distributionally robust optimization (DRO) formulations. Theoretically, it gives new generalization bounds for the contextual LP problem and sheds light on the existing results of DRO for contextual LP. Numerical experiments further reinforce the advantage of the predict-then-calibrate paradigm in that an improvement on either the prediction model or the calibration model will lead to a better final performance.
\end{abstract}

\section{Introduction}

Contextual linear programming (LP) considers the linear programming problem with the presence of covariates, and it can be viewed as a prediction problem under a constrained optimization context where the output of the prediction model serves as the objective of a downstream LP. The goal is to develop a model (trained from past data) that prescribes a decision/solution for the downstream LP problem using the covariates directly but without observation of the objective function. A similar problem formulation was also studied as prescriptive analytics \citep{bertsimas2020predictive} and predict-then-optimize \cite{elmachtoub2022smart}. The existing literature mainly develops two approaches for the problem: (i) end-to-end algorithms that map the covariates directly to a recommended decision for the LP or (ii) two-step algorithms that first predict the unobserved objective using the covariates and then solve the LP with the predicted objective. While the literature has been largely focused on the risk-neutral objective, our paper considers the risk-sensitive/robust formulation of the problem. The robust formulation of the contextual LP can be viewed as both a risk-sensitive version of the risk-neutral objective in contextual LP and an adaptive/contextual version of the standard (context-free) robust optimization problem. In comparison with the robust optimization literature, the contextualized robust optimization problem allows a contextual and data-driven way to design the uncertainty set and ideally can utilize the contextual information to make less conservative decisions. The contextual/data-driven uncertainty set has been considered by several existing works on transportation \citep{guo2023data}, portfolio management \citep{wangrobust}, and healthcare \citep{gupta2020maximizing}. While these works focus on applications with special structures or distributions, our paper considers a generic setup and aims for a general solution to the problem.

In our paper, we make the following contribution:
\begin{itemize}
    \item First, we formulate an algorithm design paradigm of predict-then-calibrate and introduce the notion of uncertainty calibration/quantification to the problem of contextual optimization. The idea is to first develop a prediction model without worrying about the downstream robustness, and then tailor the uncertainty calibration methods to quantify the prediction uncertainty towards the robust objective. 
    
    \item Second, we develop two algorithms for robust contextual LP under the paradigm of predict-then-calibrate. The derivation of theoretical guarantees shows the advantage of the disentanglement of the prediction phase and the uncertainty calibration phase. Moreover, we quantify the advantage of a better prediction and/or a better uncertainty calibration both theoretically and empirically.

    \item Third, we also draw a connection with the related problem of distributionally robust optimization. We utilize the same idea of predict-then-calibrate and derive a new generalization bound for the distributional robust version of contextual LP utilizing tools from the nonparametric regression.

\end{itemize}

In the following, we review the several streams of literature that are related to our work.

\textbf{Conditional robust optimization.} The problem studied in our paper can be viewed as a conditional version of the classic robust optimization problem. The robust optimization stems from the replacement of the risk-neutral expectation objective with a risk-sensitive robust objective such as Value-at-Risk(VaR)/quantile. The literature on robust optimization develops methods to approximate VaR by specifying the shape and size of an uncertainty set to guarantee that a VaR constraint would be satisfied \cite{ghaoui2003worst,chen2007robust,natarajan2008incorporating}. \cite{bertsimas2018data} designs the uncertainty set with a finite-sample probabilistic guarantee on optimal solutions. \cite{goerigk2020data} constructs the uncertainty set as the union of $K$ convex sets by clustering historical data. While all these works concern the context-free setting, the contextual (or conditional) setting, or the problem of robust contextual LP, considers the presence of covariates in robust optimization. Along this line, \cite{ohmori2021predictive} proposes a volume-minimized ellipsoid covering $k$-nearest samples in the contextual space, leading to a nonlinear programming problem. \cite{chenreddy2022data} extends the idea in \cite{goerigk2020data} and novelly adopts a deep learning approach to the problem which identifies which clusters the uncertain parameters belong to based on the covariates. Compared to the end-to-end formulations \cite{ohmori2021predictive, chenreddy2022data}, we do not impose restrictions on the prediction model and have a larger flexibility in choosing the uncertainty quantification method.

An alternative approach to conditional robust optimization relies on the assumption that the relationship between the outcome variable $c$ and the covariate $z$ is governed by a parameter $\theta$ in the true underlying model. In this approach, the estimation/prediction process and the optimization problem are integrated by constructing an uncertainty set for the parameter $\theta$ to mitigate the estimation uncertainty. \cite{zhu2022joint} constructs the uncertainty set for $\theta$ that contains all the parameters with training loss no worse than a threshold, but no coverage guarantee is provided. This can also be viewed as a special case of the decision-driven regularization model introduced in \cite{loke2022decision}. Another study by \cite{cao2021contextual} extends the consideration of uncertainty to both the parameter and residual. However, the incorporation of estimation and optimization processes in one optimization model can raise tractability issues, restricting the choice of complex yet expressive prediction models.

\textbf{End-to-end prescriptive analytics without prediction.} Instead of learning to predict uncertain outcomes, \cite{ban2019big} directly learn the function mapping from covariates to decisions by employing the linear decision rule for the newsvendor problem. There are also further studies investigating such integrated approaches without prediction using more complicated models \citep{bertsimas2022data}, such as random forests \citep{kallus2022stochastic} and neural networks \citep{chen2022using}. Nevertheless, the robustness of the decisions to the epistemic and aleatoric uncertainty is rarely considered in these papers.

\textbf{Contextual LP/Predict-then-optimize.} Our work complements the existing literature on contextual LP by considering a risk-sensitive objective. The existing works \cite{donti2017task,elmachtoub2022smart,ho2022risk, hu2022fast} mainly study the risk-neutral objective for the problem and aim to develop prediction methods that account for the downstream optimization tasks.  To model the remaining uncertainty after prediction in optimization, \cite{sen2018learning} considers training a parameterized point-wise estimation model first, and then approximating the conditional distribution by imposing randomness to the learned parameters empirically. \cite{ban2019dynamic} proposes to forecast the distribution of new product demand by adopting a linear regression model to fit the demands of similar products with covariates, and then using the empirical distribution of covariate-independent residuals to build a sample average approximation (SAA) model.
Some recent works \cite{kannan2020residuals, kannan2022data, kannan2021heteroscedasticity} further generalize it to a residuals-based SAA framework to include more machine learning methods and adopt distributionally robust optimization to mitigate the overfitting under limited data. A key difference between our work and this line of works is that we consider a risk-sensitive objective, while these works consider a risk-neutral objective. In this light, our framework of predict-then-calibrate is compatible with any prediction model developed along this line, and it tailors the prediction model for a robustness task.

\textbf{Local learning-based conditional stochastic optimization.} Another line of research in contextual stochastic optimization aims to utilize local learning methods to estimate the conditional distribution by reweighting historical samples. The approach proposed by \cite{hannah2010nonparametric} involves the use of Nadaraya-Watson kernel regression \citep{nadaraya1964estimating, watson1964smooth}. Building upon this, \cite{bertsimas2020predictive} generalize it to a framework to accommodate various other non-parametric machine learning methods, including $k$-nearest neighbors ($k$NN) \citep{altman1992introduction} and classification and regression trees \citep{breiman2017classification}. To overcome the overfitting effect of these methods when the sample size is small, distributionally robust optimization (DRO) methods are introduced, where the ambiguity sets are built based on relative entropy \citep{bertsimas2022bootstrap}, Wasserstein distance \citep{nguyen2020distributionally,nguyen2021robustifying,wang2021distributionally,bertsimas2023dynamic}, and probability trimming \citep{esteban2022distributionally}. Besides, \citep{ho2019data} design a regularized approach and obtains a tractable approximation by leveraging ideas from DRO with a modified $\chi^2$ ambiguity set. Compared to these existing works, our results in Section \ref{sec:DRLP} utilizes the structure of the contextual LP problem and enables a weaker condition for the generalization bound. Specifically, for the contextual LP problem, we only need to construct the ambiguity set with respect to the conditional expectation of $\E[c|z]$ which does not involve other aspects of the distribution. This perspective also complements to the existing generalization bounds for the contextual LP problem \citep{liu2021risk, ho2022risk}. 

\section{Problem Setup}

\label{sec:setup}
Consider a linear program (LP) that takes the following standard form
\begin{align}
    \label{lp:std}
    \text{LP}(c,A,b) \coloneqq \min \ &  c^\top x,\\
    \text{s.t.\ } &  Ax=b, \ x\ge 0, \nonumber
\end{align}
where $c\in\mathbb{R}^n,$ $A\in\mathbb{R}^{m\times n},$ and $b\in\mathbb{R}^m$ are the inputs of the LP. In addition, there is an available feature vector $z\in\mathbb{R}^d$ that encodes the covariates (side information) associated with the LP. 

The tuple $(c,A,b,z)$ is drawn from an unknown probability distribution $\mathcal{P}$. The problem of contextual LP (predict-then-optimize, or prescriptive analytics) usually assumes the availability of a learning (training) dataset sampled from the distribution $\mathcal{P},$
$$\mathcal{D}_l \coloneqq \left\{ \left(c_t^{(l)},A_t^{(l)}, b_t^{(l)},z_t^{(l)}\right) \right\}_{t=1}^{T_{l}}.$$ 
During the test phase, one needs to recommend a feasible solution $x_{\text{new}}$ to a new LP problem using the observation of  $(A_{\text{new}},b_{\text{new}},z_{\text{new}})$ but without the observation of the objective vector $c_{\text{new}}$:
$$(A_{\text{new}},b_{\text{new}},z_{\text{new}})\rightarrow x_{\text{new}}.$$ 

The focus of the existing literature has been mainly on the risk-neutral objective 
\begin{align}
\label{lp:condi_exp}
 \min_x \ &  \E[c|z]^\top x,\\
    \text{s.t.\ } &  Ax=b, \ x\ge 0 \nonumber
\end{align}
where the recommended decision $x$ can be viewed as a function of $(A,b,z)$. 

Alternatively, we consider a risk-sensitive objective
\begin{align}
\label{lp:condi_var}
 \min_x \ &  \text{VaR}_\alpha[c^\top x|z],\\
    \text{s.t.\ } &  Ax=b, \ x\ge 0 \nonumber
\end{align}
where $\alpha\in(0,1)$. Here $\text{VaR}_\alpha(U)$ denotes the $\alpha$-quantile/value-at-risk (VaR) of a random variable $U$; specifically, $\text{VaR}_\alpha(U)\coloneqq F^{-1}_{U}(\alpha)$ with $F^{-1}_{U}(\cdot)$ be the inverse cumulative distribution function of $U.$ 

We can interpret the problem \eqref{lp:condi_var} in two ways: First, it can be viewed as a risk-sensitive version of the contextual LP problem \eqref{lp:condi_exp}. From the prediction viewpoint, the standard problem \eqref{lp:condi_exp} concerns only the prediction of $\E[c|z]$, while solving \eqref{lp:condi_var} may involve a distributional prediction of the conditional distribution $c|z$. Second, the classic robust LP problem considers a setting where there is no covariate, and the VaR is taken with respect to the (marginal) distribution of $c$. Comparatively, the problem \eqref{lp:condi_var} can be viewed as a conditionally robust version where the VaR is taken with respect to the conditional distribution $c|z$. The presence of the covariates $z$ reduces our uncertainty on the objective $c$ and thus will induce less conservative decisions with the same level of risk guarantee.

Throughout the paper, we work with a well-trained ML model $\hat{f}: \mathbb{R}^d \rightarrow \mathbb{R}^n$ that predicts the objective with the covariates and is learned from the training data $\mathcal{D}_{l}$. 
We emphasize that we do not impose any assumption on the model, but assume the availability of a validation set $\mathcal{D}_{\text{val}} \coloneqq \left\{ \left(c_t, A_t, b_t, z_t\right) \right\}_{t=1}^{T}$ that can be used to quantify the uncertainty of $\hat{f}.$ 

\section{Robust Contextual LP}

\label{sec:ro}

The problem \eqref{lp:condi_var} has been extensively studied under the context-free setting in the literature of robust optimization \citep{ben2009robust}, and the key challenge is the non-convexity of the objective function. Equivalently, the problem \eqref{lp:condi_var} can be written as the following optimization problem 
\begin{align}
\label{opt:intractable}
 \min_{x,\mathcal{U}} \max_{c\in \mathcal{U}} \ &  c^\top x,\\
    \text{s.t.\ } &  Ax=b, \ x\ge 0, \ \mathbb{P}_{c|z}(c\in \mathcal{U})\geq \alpha \nonumber
\end{align}
where the decision variables are $x$, $c$ and a measurable set $\mathcal{U}$. The last constraint is with respect to the set $\mathcal{U}$, and the probability is taken with respect to the conditional distribution $c|z.$ One issue to solve this problem is the intractability in optimization over the uncertainty set. Particularly, even if the conditional distribution $c|z$ is discrete, \eqref{opt:intractable} is equivalent to a mixed integer linear programming problem, which is generally NP-hard and computationally intensive. Similarly, when solving a \textit{sample average approximation} form of \eqref{opt:intractable} (\cite{bertsimas2020predictive}), the problem becomes similar to the discrete case, and the intractability issue is still inevitable. To resolve this issue, one can solve the following approximation problem with a fixed uncertainty set $\mathcal{U}$ satisfying $\mathbb{P}_{c|z}(c\in \mathcal{U})\geq \alpha$, 
\begin{align}
\label{opt:tractable}
\text{LP}(\mathcal{U}) \coloneqq \min_{x} \max_{c\in \mathcal{U}} \ &  c^\top x,\\
    \text{s.t.\ } &  Ax=b, \ x\ge 0. \nonumber
\end{align}
A box- or ellipsoid-shaped uncertainty set enables a tractable optimization of the problem \eqref{opt:tractable}. The uncertainty set hopes to cover the high-density region so that the approximation to \eqref{lp:condi_var} is tighter. 

Now we present two algorithms that implement this approximation idea in the contextual setting. The key is to quantify the uncertainty of the prediction model. Both algorithms construct contextual uncertainty sets for the prediction model; that is, the size of the uncertainty set changes with respect to the covariates $z$. In addition, importantly, the construction of these uncertainty sets accounts for the tractability of the downstream robust optimization problem \eqref{opt:tractable} by outputting the box and ellipsoid shapes, respectively. In both algorithms, we first split the available validation data into two sets $\mathcal{D}_1$ and $\mathcal{D}_2$, and use the first set $\mathcal{D}_1$ for a preliminary calibration, and the second set $\mathcal{D}_2$ for an additional adjustment. The output from both algorithms gives a contextual uncertainty set $\mathcal{U}(z)$ that works as the input of \eqref{opt:tractable} and thus solves the downstream robust optimization problem. 

\begin{algorithm}[ht!]
    \caption{Box Uncertainty Quantification (BUQ)}
    \label{alg:BUQ}
    \begin{algorithmic}[1] 
    \State Input: Dataset $\mathcal{D}_{val}$, ML model $\hat{f}$, parameter $\alpha$
    \State Randomly split the validation data into two index sets $\mathcal{D}_{1} \cup \mathcal{D}_{2} = \{1,...,T\}$ and $\mathcal{D}_{1} \cap \mathcal{D}_{2} = \emptyset$
    \State \textcolor{blue}{\%\%\textit{Preliminary calibration of the uncertainty sets}}
    \For{$t \in \mathcal{D}_1$}
    \State Calculate the residual vector on the $t$-th validation sample
    \begin{align}
        r_t \coloneqq c_t - \hat{f}(z_t)
    \end{align}
    and denote $r_t=(r_{t1},...,r_{tn})^\top$
    \EndFor
    \State Train a quantile regression model $\hat{h}(z): \mathbb{R}^d \rightarrow \mathbb{R}^n$ that minimizes
    \begin{equation}
    \sum_{t\in\mathcal{D}_1} \sum_{i=1}^n \rho_{\alpha}\left(|r_{ti}|-\hat{h}(z_t)_i\right)
    \label{alg1QR}
    \end{equation}
    where $\rho_{\alpha}(\cdot) \coloneqq \alpha (\cdot)^+ + (1-\alpha)  (\cdot)^+$ denotes the pinball loss
\State \textcolor{blue}{\%\%\textit{Additional adjustment of the size of the uncertainty sets}}
\For{$t\in \mathcal{D}_2$}
\State Let
    $$\bar{c}_{t}(\eta) \coloneqq \hat{f}(z_t) +  \eta \hat{h}(z_t)\  \in \mathbb{R}^n, \ \ \underline{c}_{t}(\eta) \coloneqq \hat{f}(z_t)- \eta\hat{h}(z_t) \ \in \mathbb{R}^n $$
\EndFor
\State Choose a minimal $\eta>0$ such that
\begin{equation}
\sum_{t\in \mathcal{D}_2} 1\{\underline{c}_{t}(\eta)\le c_t\le \bar{c}_{t}(\eta)\} \ge \min\{|\mathcal{D}_2|,\alpha  (|\mathcal{D}_2|+1)\}
\label{scaleCaliber1}
\end{equation}
where $1\{\cdot\}$ is the indicator function and  the inequality is required to hold element-wise
\State Output: $\hat{h}$, $\eta$
\end{algorithmic}
\end{algorithm}

In Algorithm \ref{alg:BUQ}, we first compute the residual/error vector $r_t\in \mathbb{R}^n$ for samples in dataset $\mathcal{D}_1$. Then for each coordinate $i=1,...,n$, we fit a quantile regression model $\hat{h}$ that predicts the quantile of the absolute error by minimizing \eqref{alg1QR}. Specifically, the model $\hat{h}$ aims to predict the quantiles of each coordinate of the residual vector $r_t.$ Then, in the second step of Algorithm \ref{alg:BUQ}, we pretend the quantile model $\hat{h}$ is ``correct'' and construct the uncertainty sets accordingly for samples in dataset $\mathcal{D}_2.$ Importantly, the uncertainty set is parameterized by a scalar $\eta$. We use the parameter $\eta$ to perform an additional adjustment of the uncertainty set so that the coverage probability $\alpha$ is met on $\mathcal{D}_2$ by \eqref{scaleCaliber1}. The algorithm outputs the function $\hat{h}$ and $\eta$, and it will output a box-shaped uncertainty set $\mathcal{U}_{\alpha}^{(1)}(z) = \left[\hat{f}(z) -\eta\hat{h}(z), \hat{f}(z) +\eta\hat{h}(z)\right]$ for a new sample with covariates $z$.

\begin{algorithm}[ht!]
\caption{Ellipsoid Uncertainty Quantification (EUQ)}
\label{alg:EUQ}
\begin{algorithmic}[1] 
\State Input: Dataset $\mathcal{D}_{val}$, ML model $\hat{f}$, parameter $\alpha$
    \State Randomly split the validation data into two index sets $\mathcal{D}_{1} \cup \mathcal{D}_{2} = \{1,...,T\}$ and $\mathcal{D}_{1} \cap \mathcal{D}_{2} = \emptyset$
    \State \textcolor{blue}{\%\%\textit{ Preliminary calibration of the uncertainty sets}}
    \For{$t \in \mathcal{D}_1$}
    \State Calculate the residual vector on the $t$-th validation sample
    $$r_t \coloneqq c_t - \hat{f}(z_t)$$
\EndFor
\State Use $\mathcal{D}_1$ to fit a model $\hat{g}: \mathbb{R}^d \rightarrow \mathbb{R}$ that predicts the $\alpha$-quantile of $\Vert r_t \Vert_2$ using the covariates $z_t$

\State Fit a zero-mean normal distribution for the scaled residual vector
$$\{r_t/\hat{g}(z_t): t\in \mathcal{D}_1\}$$
and denote the covariance matrix as $\hat{\Sigma}$
\State \textcolor{blue}{\%\%\textit{Additional adjustment of the size of the uncertainty sets}}
\State Choose a minimal $\eta>0$ such that
\begin{equation}
\sum_{t\in \mathcal{D}_2} 1\left\{\sqrt{(c_t - \hat{f}(z_t))^\top \hat{\Sigma}^{-1}(c_t - \hat{f}(z_t))} \le \eta \hat{g}(z_t)  \right\} \ge \min\{|\mathcal{D}_2|,\alpha  (|\mathcal{D}_2|+1)\}
\label{scaleCaliber2}
\end{equation}
where $1\{\cdot\}$ is the indicator function and the inequality is required to hold element-wise.
\State Output: $\hat{g}$, $\hat{\Sigma}$, $\eta$
\end{algorithmic}
\end{algorithm}

Algorithm \ref{alg:EUQ} follows a similar spirit as Algorithm \ref{alg:BUQ}. It first computes the residual vector $r_t$ for samples in dataset $\mathcal{D}_1$, but differently from Algorithm \ref{alg:BUQ}, it fits a regression model for the norm $\|r_t\|_2$. Then it fits a zero-mean normal distribution for the scaled residual vector. The motivation is that the residual vector (up to a scale $\hat{g}(z)$) follows a multivariate normal distribution. We remark that although the algorithm design is motivated by such a setting of normal distribution, its theoretical guarantee does not rely on this normal structure. The key is the second step of additional calibration: it pretends the uncertainty set is ``correct'' and uses an additional scalar $\eta$ to make further adjustments such that the coverage probability $\alpha$ is met on $\mathcal{D}_2$ by \eqref{scaleCaliber2}. For a new sample with covariates $z$, the algorithm outputs an ellipsoid-shaped uncertainty set $\mathcal{U}_{\alpha}^{(2)}(z) = \left\{c: \sqrt{(c - \hat{f}(z))^\top \hat{\Sigma}^{-1}(c - \hat{f}(z))} \le \eta \hat{g}(z)  \right\}.$

In comparison, for the first step of preliminary calibration, Algorithm \ref{alg:BUQ} quantifies the uncertainty of each coordinate of the residual vector individually, while Algorithm \ref{alg:EUQ} presumes a Gaussian structure. We remark that the two algorithms provide two examples for the design of the preliminary calibration step, but these are not the only two options. As we will see shortly, the second step of additional adjustment is in charge of the coverage guarantee, so the first step can be further modified based on other prior knowledge of the underlying data/optimization problem.  

\subsection{Algorithm Analysis}
The following proposition provides a coverage guarantee for both Algorithm \ref{alg:BUQ} and \ref{alg:EUQ} in that the output uncertainty set will cover a new sample with a probability of approximately $\alpha$. 

\begin{proposition}
\label{prop:eta}
For a new sample $(c,A,b,z)$ from distribution $\mathcal{P}$, denote the uncertainty sets output from Algorithm \ref{alg:BUQ} and Algorithm \ref{alg:EUQ} by $\mathcal{U}^{(1)}_\alpha(z)$ and $\mathcal{U}^{(2)}_\alpha(z)$, respectively. The following inequalities hold for $k=1,2$, 
\begin{equation}
\alpha\leq\prob\left(c\in \mathcal{U}^{(k)}_\alpha(z) \right)
\leq\alpha+\frac{1}{|\mathcal{D}_2|+1} 
\label{probCover}
\end{equation}
where the probability is with respect to the new sample $(c,A,b,z)$ and dataset $\mathcal{D}_2$.
\end{proposition}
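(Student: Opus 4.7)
The plan is to recognize Proposition~\ref{prop:eta} as an instance of the split conformal prediction coverage guarantee, with the first calibration stage of each algorithm playing the role of the non-conformity score training and the second adjustment stage playing the role of the usual quantile-based threshold tuning. The key is to choose an appropriate non-conformity score $S_t$ for each sample, so that the event $\{c \in \mathcal{U}_\alpha^{(k)}(z)\}$ is exactly equivalent to $\{S_{\mathrm{new}} \le \eta\}$, and then to exploit exchangeability of the scores across $\mathcal{D}_2$ and the new point.

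Concretely, for Algorithm~\ref{alg:BUQ} I would define
\begin{equation*}
S_t \coloneqq \max_{i=1,\dots,n}\frac{|c_{ti} - \hat f(z_t)_i|}{\hat h(z_t)_i},
\end{equation*}
so that $\underline c_t(\eta) \le c_t \le \bar c_t(\eta)$ holds element-wise if and only if $S_t \le \eta$. Then the rule \eqref{scaleCaliber1} sets $\eta$ to be the $k$-th order statistic of $\{S_t : t\in\mathcal{D}_2\}$ with $k = \lceil \alpha(|\mathcal{D}_2|+1)\rceil$ (capped at $|\mathcal{D}_2|$). For Algorithm~\ref{alg:EUQ} I would define the analogous Mahalanobis score
\begin{equation*}
S_t \coloneqq \frac{\sqrt{(c_t - \hat f(z_t))^\top \hat \Sigma^{-1}(c_t - \hat f(z_t))}}{\hat g(z_t)},
\end{equation*}
and exactly the same structure results from \eqref{scaleCaliber2}. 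In both cases, the event $c \in \mathcal{U}_\alpha^{(k)}(z)$ for the fresh point is equivalent to $S_{\mathrm{new}} \le \eta$.

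The next step is to condition on $\mathcal{D}_1$ (together with the training data $\mathcal{D}_l$ used to produce $\hat f$), which freezes all the regression models $\hat f$, $\hat h$, $\hat g$, $\hat\Sigma$. Conditional on this sigma-algebra, the scores $\{S_t : t \in \mathcal{D}_2 \cup \{\mathrm{new}\}\}$ are i.i.d.\ functions of i.i.d.\ draws from $\mathcal{P}$, hence exchangeable. The classical split-conformal argument then says that the rank of $S_{\mathrm{new}}$ among the $|\mathcal{D}_2|+1$ scores is (approximately) uniform on $\{1,\dots,|\mathcal{D}_2|+1\}$; in particular,
\begin{equation*}
\prob\!\left(S_{\mathrm{new}} \le \eta \,\Big|\, \mathcal{D}_1\right) = \frac{\lceil \alpha(|\mathcal{D}_2|+1)\rceil}{|\mathcal{D}_2|+1} \in \left[\alpha,\; \alpha + \tfrac{1}{|\mathcal{D}_2|+1}\right].
\end{equation*}
Marginalizing over $\mathcal{D}_1$ then yields \eqref{probCover}. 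The lower bound follows from $\lceil \alpha(|\mathcal{D}_2|+1)\rceil \ge \alpha(|\mathcal{D}_2|+1)$, and the upper bound from $\lceil \alpha(|\mathcal{D}_2|+1)\rceil < \alpha(|\mathcal{D}_2|+1) + 1$.

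The main subtlety I expect is the treatment of ties and of the corner case where \eqref{scaleCaliber1}/\eqref{scaleCaliber2} becomes vacuous because $\alpha(|\mathcal{D}_2|+1) > |\mathcal{D}_2|$, which is exactly why the minimum with $|\mathcal{D}_2|$ appears in the algorithms: in that regime no finite $\eta$ need exist and the upper-bound guarantee trivializes. For the upper bound to be strict one typically assumes that the distribution of $S_t$ conditional on $\mathcal{D}_1$ is continuous, so that ties occur with probability zero; otherwise the upper bound should be interpreted in the almost-sure sense or proved with a tie-breaking randomization. Apart from this bookkeeping, the argument reduces to a clean application of exchangeability and no structural assumption on $\hat f$, $\hat h$, or $\hat g$ is needed, which is precisely the virtue of the predict-then-calibrate disentanglement.
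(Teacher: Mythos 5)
Your proposal is correct and follows essentially the same route as the paper: the paper's proof defines $\eta_t$ as the minimal scaling for which sample $t$ is covered (which coincides with your nonconformity score $S_t$), observes that coverage of the new point is equivalent to its score ranking among the $\lceil\alpha(|\mathcal{D}_2|+1)\rceil$ smallest, and invokes exchangeability to conclude the coverage probability equals $\lceil\alpha(|\mathcal{D}_2|+1)\rceil/(|\mathcal{D}_2|+1)$. Your handling of the upper bound via $\lceil\alpha(n+1)\rceil<\alpha(n+1)+1$ and your remarks on ties are, if anything, slightly more careful than the paper's.
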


The proposition gives the following insights into the two algorithms and the general problem of robust contextual LP. First, the second step (in both algorithms) of the adjustment via the scalar $\eta$ is the key to the coverage guarantee \eqref{probCover}, with several advantages. First, as the concentration argument is made with respect to only this second step, the bound is rather tight and free of unnecessary conservation in the uncertainty set construction. Second, it allows full flexibility in the choice of the prediction model $\hat{f}$ and the preliminary calibration model $\hat{h}$ (Algorithm \ref{alg:BUQ}) and $\hat{g}$ (Algorithm \ref{alg:EUQ}). This reinforces the notion of predict-then-calibrate. The design of the prediction model $\hat{f}$ can be solely targeted on the accuracy, and the coverage guarantee can be deferred and taken care of by the calibration steps such as these two algorithms. 
 
Furthermore, we note the probability in \eqref{probCover} is taken with respect to only the validation data and the new sample. It means Proposition \ref{prop:eta} holds even if the training data $D_l$ (for obtaining $\hat{f}$) is not sampled from the distribution $\mathcal{P}.$ In other words, the result only requires the validation data $\mathcal{D}_{\text{val}}\sim \mathcal{P}$ and it is robust to a distribution shift of the training distribution when $\mathcal{D}_{l}\not\sim \mathcal{P}$. In a broader context, when the training data, validation data, and test data come from different distributions, this situation is commonly referred to as the out-of-domain (OOD) problem. We refer to the survey papers \cite{shen2021towards,yang2021generalized} for a more detailed discussion of this problem. Developing more coverage guarantees with OOD data will require future investigations, and we will list this problem as a future direction.

In addition, the coverage guarantee in Proposition \ref{prop:eta} can be extended to a robustness guarantee for the contextual LP problem as in the following corollary. 

\begin{corollary}
\label{coro:roalpha}
For a new sample $(c,A,b,z)$ from distribution $\mathcal{P}$, denote the uncertainty set output from Algorithm \ref{alg:BUQ} or Algorithm \ref{alg:EUQ} by $\mathcal{U}_\alpha(z)$.
Let $x^*(z)$ and OPT$(z)$ be the optimal solution and the optimal value of LP$(\mathcal{U}_\alpha(z))$ \eqref{opt:tractable}. Then we have
    \begin{align*}
       \prob\left(c^\top x^*(z) \le
         \text{OPT}(z)\right) \ge \alpha
    \end{align*}
where the probability is taken with respect to the new sample $(c,A,b,z)$ and $\mathcal{D}_2.$
\end{corollary}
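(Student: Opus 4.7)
The plan is to derive the corollary almost immediately from Proposition \ref{prop:eta} via a pointwise (event-level) inclusion, with no new probabilistic argument required. The key observation is that whenever the realized objective vector $c$ happens to lie inside the uncertainty set $\mathcal{U}_\alpha(z)$, the random quantity $c^\top x^*(z)$ is automatically dominated by the robust optimum $\text{OPT}(z)$, so the target event contains the coverage event.

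First I would note that $x^*(z)$, being an optimizer of $\text{LP}(\mathcal{U}_\alpha(z))$ in \eqref{opt:tractable}, is in particular feasible for the original LP constraints, i.e.\ $A x^*(z) = b$ and $x^*(z) \ge 0$. By the definition of $\text{OPT}(z)$ as the value of an inner maximum,
$$\text{OPT}(z) = \max_{c' \in \mathcal{U}_\alpha(z)} c'^\top x^*(z).$$
Therefore, on the event $\{c \in \mathcal{U}_\alpha(z)\}$, the realized vector $c$ is one admissible candidate in this maximization, and hence $c^\top x^*(z) \le \text{OPT}(z)$. This yields the deterministic event inclusion
$$\{\,c \in \mathcal{U}_\alpha(z)\,\} \ \subseteq \ \{\,c^\top x^*(z) \le \text{OPT}(z)\,\}.$$

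Applying monotonicity of probability together with the coverage lower bound $\prob(c \in \mathcal{U}_\alpha(z)) \ge \alpha$ from Proposition \ref{prop:eta} then gives the claim. I do not expect any genuine obstacle here; the only point worth verifying is that the probability in the corollary is taken with respect to the same joint randomness, namely the new sample $(c,A,b,z)$ together with the calibration split $\mathcal{D}_2$, as in Proposition \ref{prop:eta}, and that $x^*(z)$ and $\text{OPT}(z)$ are measurable functions of $(A,b,z,\mathcal{D}_2)$ through the outputs $\hat{h},\eta$ (or $\hat{g},\hat{\Sigma},\eta$). Both facts are automatic from how the uncertainty sets are constructed in Algorithms \ref{alg:BUQ} and \ref{alg:EUQ}, so the corollary follows in essentially one line after invoking Proposition \ref{prop:eta}.
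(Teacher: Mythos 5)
Your proposal is correct and matches the paper's own proof: both argue that on the event $\{c\in\mathcal{U}_\alpha(z)\}$ the inner maximization defining $\text{OPT}(z)$ dominates $c^\top x^*(z)$, giving the event inclusion, and then invoke the coverage lower bound from Proposition \ref{prop:eta}. The additional measurability remark is a harmless (and reasonable) elaboration.
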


To end this section, we remark that although the coverage guarantee \eqref{probCover} is in a population sense where the probability is taken with respect to both $c$ and $z$, our framework of predict-then-calibrate also has the flexibility to achieve an individual coverage by changing the calibration algorithm under some assumptions. Here, the individual coverage means that the conditional probability 
$
    \prob\left(c\in \mathcal{U}_\alpha(z) | z\right)=\alpha
$
holds for any covariate $z$ as the constraint in \eqref{opt:intractable}, where $\mathcal{U}_\alpha(z)$ denotes any given contextualized uncertainty set. However, the above-mentioned assumption to achieve individual coverage cannot be verified from the data prior, and achieving this coverage is, in general, a hard problem, although there have been many attempts in the literature on conformal prediction and model calibration. In addition, we want to emphasize that, compared to achieving individual coverage, the most significant focus of this paper is to introduce the idea of uncertainty quantification to solve robust optimization, which disentangles the prediction from the calibration. As a result, we delay the discussion of the individual coverage to Appendix \ref{appen:indi_cover}.


\subsection{Value of Better Prediction and Contextual Uncertainty Set}

\label{sec:betterPredCali}

Two important components in the framework of predict-then-calibrate are (i) the flexibility in choosing the prediction model $\hat{f}$ and (ii) the contextual uncertainty set $\mathcal{U}_{\alpha}(z).$ In the following, we illustrate the importance of these two aspects via simple examples.

\textbf{The value of better prediction.} Consider a single-variable LP
\begin{align}
    \min_x \ & \ cx  \ \ \text{s.t.}\ -1\le x\le 1 \label{LP:toy}
\end{align}
where the objective is determined by
$c = \sum_{i=1}^{d} z_i-d\cdot\epsilon.$ Here $z_i$ and $\epsilon$ are sampled from an exponential distribution $\text{Exp}(1)$ for all $i=1,..,d$. We note that $z_i$ can be interpreted as either the covariate itself or some useful latent factor extracted by the prediction model. Consider a prediction model that can only utilize/extract the first $k$ covariates/latent factors $z_{1:k}=(z_1,...,z_k)$. The optimal prediction model would be $f_k(z_{1:k})=\sum_{i=1}^{k} z_i$, and also we assume it has perfect uncertainty quantification denoted by $\mathcal{U}_\alpha(z_{1:k})$. We denote the accordingly robust solution by $x^*_\alpha(z_{1:k}),$ and thus all these solutions for $k=1,...,d$ meet the coverage guarantee.

\begin{proposition}
For any $\alpha\in(0.5,1)$ and $k\geq1$
\begin{align*}
    &\mathbb{P}\left(x^*_\alpha(z_{1:k})=0\right)=\\
    &\frac{1}{\Gamma(k)}\left(\gamma\left(k,\max\left\{0,-d\log\left((1-\alpha)\left(1+\frac{1}{d}\right)^{d-k}\right)\right\}\right)-\gamma\left(k,\max\left\{0,-d\log\left(\alpha\left(1+\frac{1}{d}\right)^{d-k}\right)\right\}\right)\right),
\end{align*}
and it decreases monotonously with respect to $k.$ Here, $\gamma(\cdot,\cdot)$ denotes the lower incomplete gamma function.
\label{prop:betterPredict}
\end{proposition}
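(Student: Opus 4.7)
The plan is to reduce the event $\{x^*_\alpha(z_{1:k})=0\}$ to an explicit condition on $S_k \coloneqq \sum_{i=1}^{k} z_i$, compute its probability using the Gamma distribution of $S_k$, and then argue monotonicity in $k$.

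First I would analyze the single-variable robust LP in \eqref{LP:toy}. With a $\mathrm{VaR}_\alpha$ objective on the one-dimensional feasible set $[-1,1]$, a short case analysis over the candidate corners $x\in\{-1,0,+1\}$ shows that, under perfect conditional uncertainty quantification, the robust optimum is $x^*_\alpha(z_{1:k})=0$ precisely when $F^{-1}_{c\mid z_{1:k}}(1-\alpha)\le 0\le F^{-1}_{c\mid z_{1:k}}(\alpha)$, equivalently $\mathbb{P}(c\le 0\mid z_{1:k})\in[1-\alpha,\alpha]$. The problem thus reduces to controlling the single scalar $\mathbb{P}(c\le 0\mid z_{1:k})$.

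Next I would write $c = S_k + R$ with $R \coloneqq \sum_{i=k+1}^{d}z_i - d\epsilon$ independent of $z_{1:k}$, so the target probability equals $G(-S_k)$, where $G$ is the CDF of $R$. Integrating out $\epsilon\sim\mathrm{Exp}(1)$ first, and then against the $\mathrm{Gamma}(d-k,1)$ density of $\sum_{i=k+1}^{d}z_i$, yields the closed form $G(-s)=e^{-s/d}/(1+1/d)^{d-k}$ for every $s\ge 0$. Substituting $s=S_k$ and inverting the double inequality $1-\alpha\le G(-S_k)\le\alpha$ gives $A_2(k)\le S_k\le A_1(k)$ with $A_1(k)$ and $A_2(k)$ exactly as in the statement; the $\max\{0,\cdot\}$ clippings appear because $S_k\ge 0$ while the raw logarithmic bounds may be negative (and then inactive). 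Since $S_k\sim\mathrm{Gamma}(k,1)$, the probability of this interval equals $[\gamma(k,A_1(k))-\gamma(k,A_2(k))]/\Gamma(k)$ by definition of the lower incomplete gamma function, establishing the displayed identity.

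For the monotonicity claim, my route is to observe that $M_k \coloneqq \mathbb{P}(c\le 0\mid z_{1:k}) = G(-S_k)$ is a Doob martingale converging to the $\{0,1\}$-valued limit $\mathbb{I}\{c\le 0\}$, and in fact admits the explicit multiplicative update $M_{k+1} = M_k\cdot V_{k+1}$, where $V_{k+1} = (1+1/d)\,e^{-z_{k+1}/d}$ is mean-one and independent of $M_k$ (directly from the closed form). Intuitively $M_k$ ``spreads out'' toward $\{0,1\}$ as $k$ grows, so the probability of landing in the fixed strict sub-interval $[1-\alpha,\alpha]\subsetneq(0,1)$ should shrink; rigorously, one can integrate the step update over $V_{k+1}$ against the explicit Gamma-based density of $M_k$ to compare $\mathbb{P}(M_{k+1}\in[1-\alpha,\alpha])$ with $\mathbb{P}(M_k\in[1-\alpha,\alpha])$ termwise. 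The main obstacle will be this monotonicity step: generic martingale ``spreading'' does not yield monotonicity of probabilities of a fixed sub-interval, so the argument must exploit the specific exponential/Gamma structure. An alternative and perhaps cleaner route is to differentiate the already-established closed-form expression with respect to $k$ (treated as a continuous parameter through an integral representation of $\gamma(k,\cdot)$) and verify the derivative is non-positive on $[1,d]$, which reduces to a routine but case-split computation depending on whether each $\max\{0,\cdot\}$ clipping is active.
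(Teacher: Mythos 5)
Your derivation of the closed-form identity follows essentially the same route as the paper's proof: both reduce the event $\{x^*_\alpha(z_{1:k})=0\}$ to the two-sided condition $1-\alpha\le \mathbb{P}(c\le 0\mid z_{1:k})\le\alpha$ on the one-dimensional feasible set, both integrate out $\epsilon$ and the $\mathrm{Gamma}(d-k,1)$ sum $\sum_{i=k+1}^{d}z_i$ to obtain $\mathbb{P}(c\le 0\mid z_{1:k})=e^{-S_k/d}\left(1+\tfrac{1}{d}\right)^{k-d}$ (your $G(-s)$ is the same quantity written with the reciprocal exponent), and both invert this to an interval for $S_k\sim\mathrm{Gamma}(k,1)$ whose probability is the stated ratio of lower incomplete gamma functions, with the $\max\{0,\cdot\}$ clipping handled identically because $S_k\ge 0$.

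Where you diverge is on the monotonicity claim, and your caution there is warranted: the paper disposes of this step with the single sentence that one can ``apply Jensen's inequality to see the result,'' with no further detail. As you note, $M_k=\mathbb{P}(c\le 0\mid z_{1:k})$ is a Doob martingale, and conditional Jensen yields $\mathbb{E}[g(M_{k+1})]\ge\mathbb{E}[g(M_k)]$ only for convex $g$; the indicator of the complement of the fixed interval $[1-\alpha,\alpha]$ is not convex, so Jensen alone does not deliver $\mathbb{P}(M_{k+1}\in[1-\alpha,\alpha])\le\mathbb{P}(M_k\in[1-\alpha,\alpha])$. Your treatment is therefore at least as complete as the paper's on this point: either of your suggested routes --- exploiting the explicit mean-one multiplicative update $M_{k+1}=M_k\left(1+\tfrac{1}{d}\right)e^{-z_{k+1}/d}$ with the known Gamma density of $S_k$, or differentiating the closed form in $k$ through an integral representation of $\gamma(k,\cdot)$ --- would still need to be carried out to make the monotonicity rigorous, but the paper itself provides nothing beyond the unelaborated appeal to Jensen.
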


Proposition \ref{prop:betterPredict} gives an analytical expression for the probability of the optimal robust solution equal to zero. A zero solution can be viewed as a conservative solution when the prediction model is uncertain about the objective $c$. While we assume the prediction model is equipped with perfect uncertainty quantification, the proposition says that more informative covariates and/or more powerful prediction models will lead to less conservative solutions for the downstream robustness task. The reason is that a better prediction model can make the uncertainty quantification task easier by smoothing the conditional distribution of the residual $r|z$ with respect to $z$. For example, when $k=d$ in Proposition \ref{prop:betterPredict}, the distributions of residuals are identical and thus very smooth for different covariates, which is easier to quantify than the case with $k=0$. This reason can also be extended to general cases.

\textbf{The value of better calibration.} Consider the LP \eqref{LP:toy} again but with $c$ specified by 
\begin{align*}
    c = (\text{sign}(z)+\epsilon)\sqrt{|z|},
\end{align*}
where $z\in\mathbb{R}$ is sampled uniformly from $[-1,1]$, and $\epsilon$ is sampled from the uniform distribution on $[-0.5,0.5]$. In this case, the sign of the objective $c$ can be deterministically determined by the sign of $z$. With perfect knowledge of the underlying uncertainty, the following solution is the optimal robust solution for all $\alpha\in(0,1)$
\begin{align*}
x^*(z)=-\text{sign}(z).
\end{align*}
The following proposition gives another uncertainty set with the optimal prediction model and a coverage guarantee but a suboptimal robust solution. 

\begin{proposition}
For $\alpha\ge 1/2$, let the uncertainty set 
$$\mathcal{U}_{\alpha}(z)= \begin{cases}
               \left[\sqrt{z}-\frac{1-\sqrt{2-2\alpha}}{2},\infty\right), &\text{if $z\in[0,1]$},\\
             \left(-\infty,-\sqrt{-z}+\frac{1-\sqrt{2-2\alpha}}{2}\right], &\text{if $z\in[-1,0)$}.
               \end{cases}$$
The uncertainty set has a coverage guarantee in that 
$\prob(c\in \mathcal{U}_{\alpha}(z)) = \alpha.$ If we solve the optimization problem \eqref{opt:tractable} with the uncertainty set $\mathcal{U}_{\alpha}(z)$, the following robust solution is obtained
    \begin{align*}
        x(z)=
            \begin{cases}
                -1, & \text{if }z\geq\frac{3-2\alpha-2\sqrt{2-2\alpha}}{4},\\
                0, & \text{if }\frac{-3+2\alpha+2\sqrt{2-2\alpha}}{4}\leq z\leq\frac{3-2\alpha-2\sqrt{2-2\alpha}}{4},\\
                1, &\text{if }z\leq\frac{-3+2\alpha+2\sqrt{2-2\alpha}}{4}.
            \end{cases}
    \end{align*}    
\label{prop:cali}
\end{proposition}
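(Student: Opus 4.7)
The proposition has two parts: the coverage identity $\prob(c \in \mathcal{U}_\alpha(z)) = \alpha$ and the closed form for the robust solution. The plan is to first pin down the conditional law of $c$ given $z$, use it to derive the coverage equation that determines the offset $\delta \coloneqq (1-\sqrt{2-2\alpha})/2$, and then solve the inner max explicitly since $\mathcal{U}_\alpha(z)$ is a half-line.

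First I would compute the conditional distribution. Since $c = (\mathrm{sign}(z)+\epsilon)\sqrt{|z|}$ with $\epsilon \sim U[-1/2, 1/2]$, for $z \in [0,1]$ the variable $c\mid z$ is uniform on $[\tfrac{1}{2}\sqrt{z}, \tfrac{3}{2}\sqrt{z}]$ and, by symmetry, for $z \in [-1,0)$ it is uniform on $[-\tfrac{3}{2}\sqrt{-z}, -\tfrac{1}{2}\sqrt{-z}]$. For $z \in [0,1]$ the event $c \ge \sqrt{z} - \delta$ is therefore equivalent to $\epsilon \ge -\delta/\sqrt{z}$, whose conditional probability equals $\min\{1, \tfrac{1}{2} + \delta/\sqrt{z}\}$. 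The symmetric calculation for $z<0$ gives the same integrand in $|z|$, so writing $u=|z|$ (which is uniform on $[0,1]$),
\begin{equation*}
\prob(c \in \mathcal{U}_\alpha(z)) = \int_0^1 \min\!\left\{1, \tfrac{1}{2} + \tfrac{\delta}{\sqrt{u}}\right\} du.
\end{equation*}
Splitting at $u = 4\delta^2$ (where the two branches meet) and evaluating gives $\tfrac{1}{2} + 2\delta - 2\delta^2$. Setting this equal to $\alpha$ is a quadratic in $\delta$ whose smaller root is precisely $(1-\sqrt{2-2\alpha})/2$, confirming the coverage claim.

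Next I would solve the robust LP. Fix $z \in [0,1]$; the uncertainty set $[\sqrt{z}-\delta, \infty)$ is unbounded above, so $\max_{c \in \mathcal{U}_\alpha(z)} cx = +\infty$ whenever $x>0$. Hence the minimizer lies in $[-1,0]$, on which the inner max is attained at the left endpoint and equals $(\sqrt{z}-\delta)\,x$. A one-line case split on the sign of $\sqrt{z}-\delta$ then gives the optimum: when $\sqrt{z}-\delta \ge 0$ (equivalently $z \ge \delta^2$) the coefficient of $x$ is nonnegative and the outer min is achieved at $x = -1$; when $\sqrt{z}-\delta < 0$ it is achieved at $x=0$. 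Expanding $\delta^2 = (3-2\alpha-2\sqrt{2-2\alpha})/4$ matches the threshold stated in the proposition, and the symmetric argument with a left-unbounded set handles $z \in [-1,0)$, producing the $x=1$ branch.

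The only place that requires any real bookkeeping is the coverage integral, because one must handle the region $u < 4\delta^2$ where the conditional coverage saturates at $1$; this is what I expect to be the main (though very mild) obstacle. Everything else follows from the one-dimensional geometry of a half-line uncertainty set and a direct symmetry argument between the $z>0$ and $z<0$ branches.
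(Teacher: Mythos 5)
Your proposal is correct and follows essentially the same route as the paper: the coverage integral you compute over $u=|z|$ (with the saturation split at $u=4\delta^2$) is exactly the marginal CDF of the residual $r=\epsilon\sqrt{|z|}$ that the paper evaluates at $\pm\delta$, and your determination of the robust solution from the sign of the half-line's endpoint matches the paper's case analysis on whether $\mathcal{U}_\alpha(z)$ is one-signed or straddles zero. The only cosmetic difference is that you solve the quadratic to derive $\delta$ whereas the paper verifies the stated $\delta$ directly; both computations are identical.
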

The robust solution induced by the uncertainty set $\mathcal{U}_{\alpha}(z)$ is unnecessarily conservative compared to the optimal robust solution $x^*(z).$ This highlights the importance of good uncertainty quantification, and in particular, contextualized uncertainty quantification. Specifically, the suboptimality of $x(z)$ arises from that the uncertainty set $\mathcal{U}_{\alpha}(z)$ in the proposition is not much contextualized with respect to $z$, and this justifies the contextualized uncertainty models $\hat{h}$ and $\hat{g}$ for the preliminary calibration step in Algorithm \ref{alg:BUQ} and Algorithm \ref{alg:EUQ}.

In addition to the contextualized uncertainty quantification, we remark the flexibility in choosing the calibration model or uncertainty set can bring us two benefits. First, it has the potential to further reduce decision conservatism by collaborating with new designs of small uncertainty sets. Specifically, we can design predict-then-calibrate algorithms with even small uncertainty sets based on many previous works that focus on designing minimum size uncertainty sets to achieve a given coverage guarantee in the context-free robust optimization or distributional robust optimization literature (\cite{bertsimas2018data,gupta2019near}). Second, this framework is also applicable to more general optimization problems such as convex programs or multi-stage stochastic programs. In particular, for these general problems, if the robust VaR formulation has a tractable construction of the uncertainty set in the context-free setting, we can then design a predict-then-calibrate algorithm by constructing the corresponding contextual uncertainty set accordingly. To close this section, we emphasize that our predict-then-calibrate framework behaves more like a plug-in module that can fit into the existing robust optimization literature, and thus, it is potentially capable of handling more problems than the contextual LP loss discussed in this paper.

\section{Extension to Distributionally Robust Contextual LP}\label{sec:DRLP}

In the previous sections, we study the problem of robust contextual LP and the framework of predict-then-calibrate. Now we turn to a related but different problem -- distributionally robust contextual LP -- through the lens of predict-then-calibrate. We relate the objectives of robustness and distributional robustness with the two types of uncertainty in the uncertainty quantification literature. We show the paradigm of predict-then-calibrate can also be extended to the distributionally robust contextual LP and obtain a new generalization bound for the problem. 

The distributionally robust contextual LP concerns the following problem:
\begin{align*}
    \min_{x} \max_{\mathcal{P}'\in \Xi_\gamma} &\ \mathbb{E}_{\mathcal{P}'} \left[c^\top x\right] = \mathbb{E}_{\mathcal{P}'} \left[c\right]^\top x\\
        \text{s.t.}\ &Ax\leq b,\ x\geq 0
\end{align*}
where $\Xi_\gamma$ denotes a set of distributions that describes the uncertainty on $c$. Ideally, one hopes that the uncertainty set covers the true conditional distribution $c|z.$ Compared to the robust version that concerns the VaR/Quantile of the objective $c$, the formulation here still concerns the risk-neutral objective but accounts for the uncertainty in estimating $\E[c|z].$ The distributionally robust optimization has two roles: first, it can be used to derive solutions with provable guarantees on generalization, and second, it induces a regularization effect for the underlying prediction model. 

\begin{figure}[ht!]
  \centering
  \includegraphics[width=0.5\textwidth]{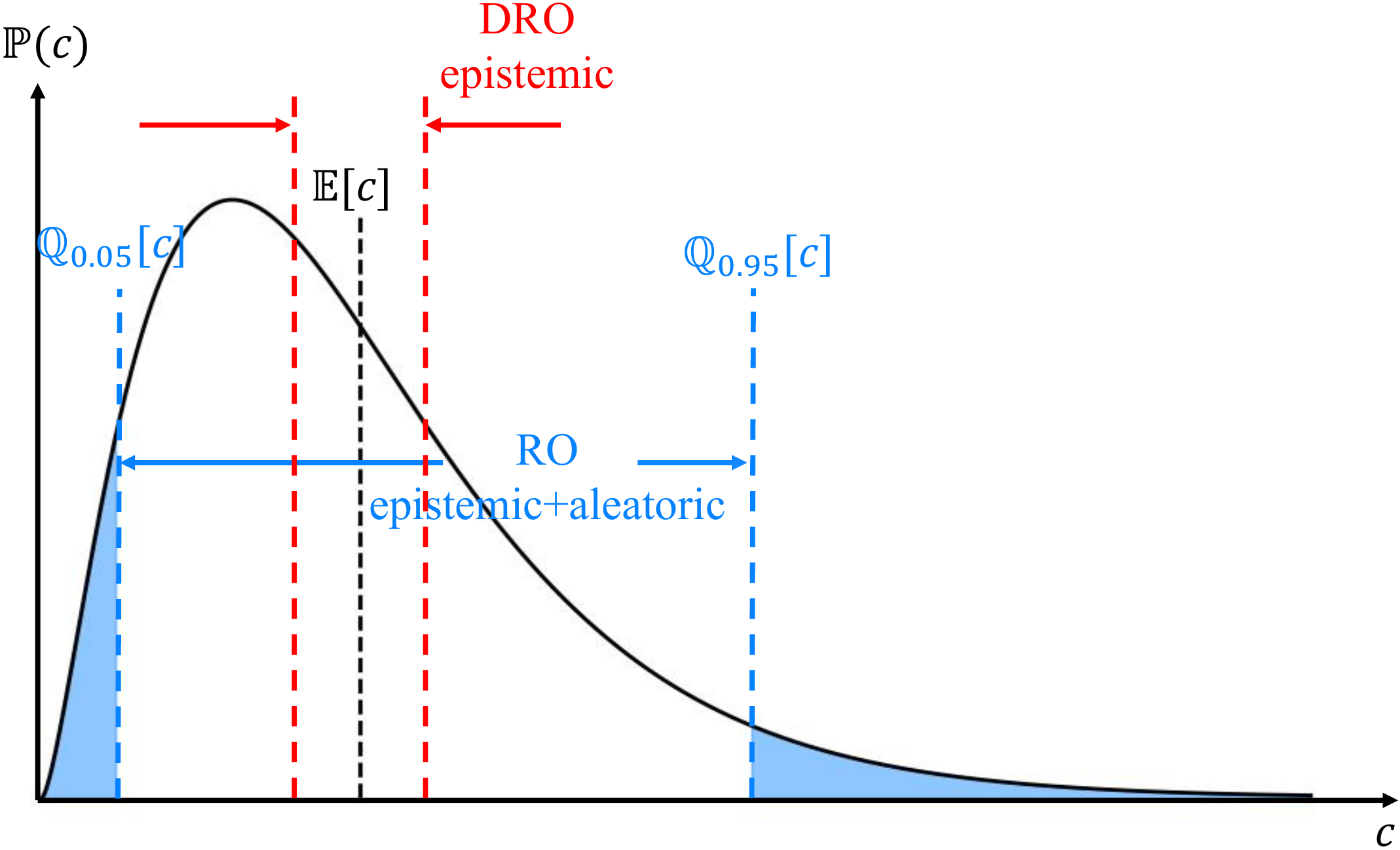}
  \caption{Illustration of robust optimization (RO) and distributionally robust optimization (DRO) (here we omit the covariates $z$ for simplicity). DRO concerns epistemic uncertainty \cite{hullermeier2021aleatoric}, the part of the uncertainty that can be reduced by obtaining more information. With more samples, the uncertainty set will shrink. In contrast, aleatoric uncertainty  \cite{hullermeier2021aleatoric} refers to the inherent and irreducible randomness of $c.$ To derive the uncertainty set for RO problem, one needs to account for the summation of epistemic and aleatoric uncertainty.} 
  \label{fig:2uncert}
\end{figure}

As in the previous section, we denote the prediction error on the validation data $\mathcal{D}_{\text{val}}$ by 
$$r_t = c_{t}-\hat{f}(z_t).$$
The following proposition states that there exists a distributionally robust algorithm that achieves a generalization bound that is oblivious with respect to the underlying prediction model $\hat{f}.$
\begin{proposition}
\label{thm:dro}
Assume there exists a constant $L$ such that the following condition holds for any $z,z'$
\[|\E[r|z] -\E[r|z']|\le L\|z-z'\|_2^s.\]
Then under mild boundedness assumptions on the distribution, there exists a distributionally robust optimization algorithm that utilizes the validation data $\mathcal{D}_{\text{val}}$ and outputs a solution $\hat{x}(z)$ such that the following inequality holds with high probability
\[
    \E\left[c^{\top}\left(\hat{x}(z)-x^*(z)\right)\right]   \leq O\left(T^{-\frac{s}{2(s+d)}}\log T\right).
    \]
\end{proposition}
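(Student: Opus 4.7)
The plan is to follow the predict-then-calibrate template in a distributional (epistemic) sense: use the validation sample to nonparametrically estimate the conditional mean of the residual $m(z):=\E[r|z]=\E[c|z]-\hat{f}(z)$, use the resulting point estimate to center a small ambiguity set whose radius shrinks at the stated nonparametric rate, and then read off the suboptimality of the DRO solution via linear duality. The key structural advantage of this setup is that the Hölder-type smoothness only needs to hold for the \emph{residual} $m(z)$, not for $\E[c|z]$ itself, so one can piggyback on an arbitrary $\hat{f}$, matching the theme of Section 3.

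First I would compute residuals $r_t=c_t-\hat{f}(z_t)$ on $\mathcal{D}_{\text{val}}$ and form a local nonparametric estimator $\hat{m}(z)$---for instance, a $k$-nearest-neighbor or Nadaraya--Watson kernel regression with bandwidth $h_T$ tuned to the rate exponent in the statement. The Hölder-type condition $|\E[r|z]-\E[r|z']|\le L\|z-z'\|_2^s$ produces a bias of order $L h_T^s$, while the mild boundedness assumption on $c$ yields sub-Gaussian concentration of the local average with variance of order $\sqrt{\log T/(T h_T^d)}$ after a union bound over a polynomial-size covering of the compact $z$-support. Choosing $h_T$ so that these two terms are of the same order $T^{-s/(2(s+d))}$ (this balancing, rather than the Stone-optimal one, is what produces the exponent in the statement) gives a high-probability event $\mathcal{E}$ on which
\[
  \sup_z\bigl\|\hat{f}(z)+\hat{m}(z)-\E[c|z]\bigr\|_\infty \;\le\; \Delta_T \;=\; O\!\left(T^{-\frac{s}{2(s+d)}}\log T\right).
\]

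Next I would define the DRO ambiguity set $\Xi_{\gamma}(z):=\{\mathcal{P}':\|\E_{\mathcal{P}'}[c]-(\hat{f}(z)+\hat{m}(z))\|_\infty\le \gamma\}$ with $\gamma=\Delta_T$, so that on $\mathcal{E}$ the true conditional distribution of $c|z$ lies in $\Xi_\gamma(z)$. Because the objective is linear in $\mu:=\E_{\mathcal{P}'}[c]$, the inner max collapses to a linear maximization over the $\ell_\infty$-ball, and the DRO-optimal $\hat{x}(z)$ satisfies on $\mathcal{E}$
\[
  \E[c|z]^\top\hat{x}(z) \;\le\; \max_{\mu\in\Xi_\gamma(z)}\mu^\top\hat{x}(z) \;\le\; \max_{\mu\in\Xi_\gamma(z)}\mu^\top x^*(z) \;\le\; \E[c|z]^\top x^*(z)+2\Delta_T\|x^*(z)\|_1,
\]
where the first inequality uses $\E[c|z]\in\Xi_\gamma(z)$, the second uses DRO-optimality of $\hat{x}$, and the third uses a triangle inequality inside the ball. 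The mild boundedness assumption implies a uniform bound on $\|x^*(z)\|_1$, so integrating over the complement of $\mathcal{E}$ (a $1/T$ event after setting the confidence parameter) yields the advertised $O(T^{-s/(2(s+d))}\log T)$ suboptimality in expectation.

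\textbf{Main obstacle.} The delicate step is the uniform-in-$z$ concentration used to bound $\Delta_T$: pointwise guarantees for $k$-NN or kernel regression are classical, but ensuring the DRO set contains $\E[c|z]$ for every test $z$ simultaneously requires a union bound over a covering of the $z$-space with cardinality polynomial in $T$, and the sub-Gaussian deviations must be calibrated to pay only the stated $\log T$ factor. A secondary subtlety is that the Hölder assumption is imposed on $\E[r|z]$ alone rather than on the full conditional distribution, so the regression argument must be routed entirely through the residual---this is exactly what makes the resulting bound oblivious to the choice of $\hat{f}$ and produces a dimension-only, smoothness-only rate in the spirit of the paper's disentanglement principle.
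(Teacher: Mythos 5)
Your proposal follows the same skeleton as the paper's proof: form residuals $r_t=c_t-\hat f(z_t)$ on the validation set, run a Nadaraya--Watson/kernel regression to estimate $\E[r|z]$, split the error into a bias term of order $Lh^s$ controlled by the H\"older condition and a fluctuation term controlled by Hoeffding-type concentration, take $h=T^{-1/(2(s+d))}$, and finish with an LP perturbation step showing that an $\varepsilon$-error in the objective vector costs at most $2\varepsilon$ in LP value when the feasible region lies in the unit ball. Your DRO sandwich
\[
\E[c|z]^\top\hat x(z)\le\max_{\mu\in\Xi_\gamma(z)}\mu^\top\hat x(z)\le\max_{\mu\in\Xi_\gamma(z)}\mu^\top x^*(z)\le\E[c|z]^\top x^*(z)+2\Delta_T\|x^*(z)\|_1
\]
is interchangeable with the paper's direct inequality $c_1^\top(x_2^*-x_1^*)\le2\|c_1-c_2\|_2$, so that step is fine.

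The one genuine deviation is the mode of the estimation guarantee, and it creates a gap. You ask for a \emph{uniform} bound $\sup_z\|\hat m(z)-\E[r|z]\|\le\Delta_T$ via a covering/union-bound argument. Under the paper's hypotheses this cannot work: Assumption \ref{asmpt:z} only bounds the marginal density of $z$ from \emph{above} (and bounds its gradient), with no lower bound, so there may be regions of the covariate space that contain no validation samples at all with non-negligible probability, and no union bound over a net can make the local estimator accurate there. The paper instead proves the guarantee \emph{in probability over the random test covariate} $z_0$: Lemma \ref{lem:efsmpl} restricts to the event $\{p(z_0)\ge\delta/2\}$, which has probability at least $1-\delta/2$, and on that event the number of samples receiving positive kernel weight is of order $\delta T^{1-d\gamma}$; this is exactly why the constants in Propositions \ref{prop:krnlest} and \ref{thm:dro} carry explicit $1/\delta$ and $1/\sqrt{\delta}$ factors. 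To repair your route you must either add a density lower bound to the ``mild boundedness'' hypotheses or downgrade the sup-norm claim to the pointwise-in-probability version the paper actually proves. A minor further remark: exactly balancing $Lh^s$ against $\sqrt{\log T/(Th^d)}$ would give the Stone rate $T^{-s/(2s+d)}$, which is faster than the stated exponent; the paper simply fixes $h=T^{-1/(2(s+d))}$ and lets the bias term dominate, so either bandwidth choice delivers the claimed bound.
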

We defer the detailed proof and the DRO algorithm to Appendix \ref{sec:DRO}. Specifically, the DRO algorithm performs a nonparametric uncertainty quantification with respect to $r_t$'s and thus it is oblivious to the underlying prediction model $\hat{f}.$ We remark that such idea of calibrating $r_t$'s is not new and has been exploited in \cite{wang2021distributionally,kannan2020residuals,kannan2021heteroscedasticity}. But notably, our result requires a weaker condition on the underlying $\hat{f},$ where the previous works rely on a realizability condition of the prediction model.  Moreover, compared with the generalization bound in the contextual LP literature \cite{liu2021risk}, although \cite{liu2021risk} develops an $O(T^{-1/4})$ generalization bound, they still require the function class of the prediction model can have bounded Rademacher complexity. If that condition holds here, our generalization bound can also be improved accordingly.

\section{Experiment}
\label{sec:Experi}

In this section, we illustrate the performance of our proposed algorithms via one simple example and also a shortest path problem considered in \cite{elmachtoub2022smart} and \cite{hu2022fast}. We defer more results including more visualizations, another experiment on the fractional knapsack problem \cite{ho2022risk}, and experiment setups to the Appendix. We implement our Algorithm \ref{alg:BUQ} (PTC-B) and Algorithm \ref{alg:EUQ} (PTC-E) against several benchmark methods including the Ellipsoid method that ignores the covariates, $k$-nearest neighbor (kNN) approach in \cite{ohmori2021predictive}, the Deep Cluster-Classify (DCC), and Integrated Deep Cluster-Classify (IDCC) approaches in \cite{chenreddy2022data}. More experiment details are deferred to Appendix \ref{sec:supp-experiment}.

\begin{figure}[ht!]
  \centering
  \begin{subfigure}{0.43\textwidth}
    \centering
    \includegraphics[width=\linewidth]{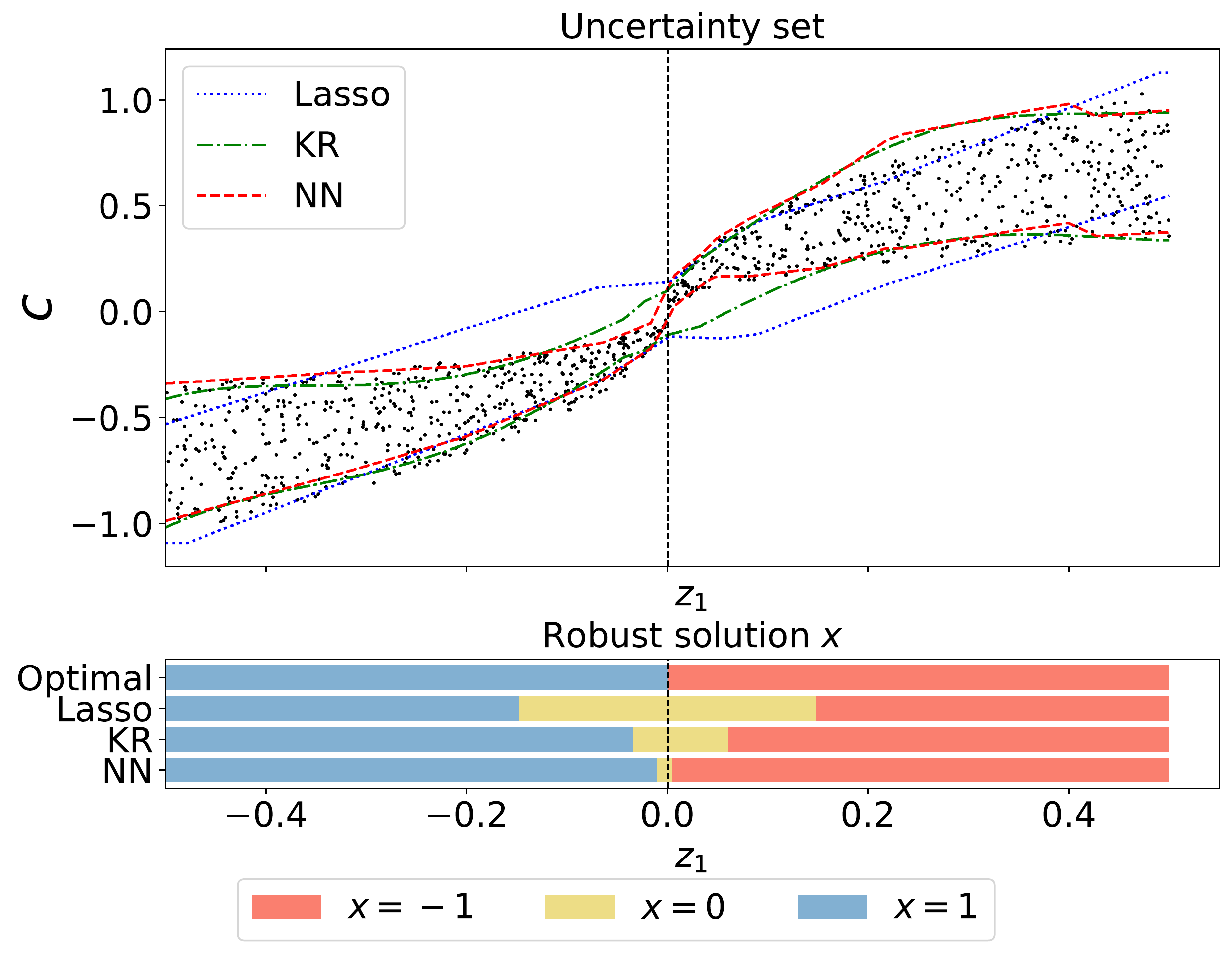}
    \caption{Prediction}
    \label{fig:toy_diff_f}
  \end{subfigure}
  \hfill
  \begin{subfigure}{0.43\textwidth}
    \centering
    \includegraphics[width=\linewidth]{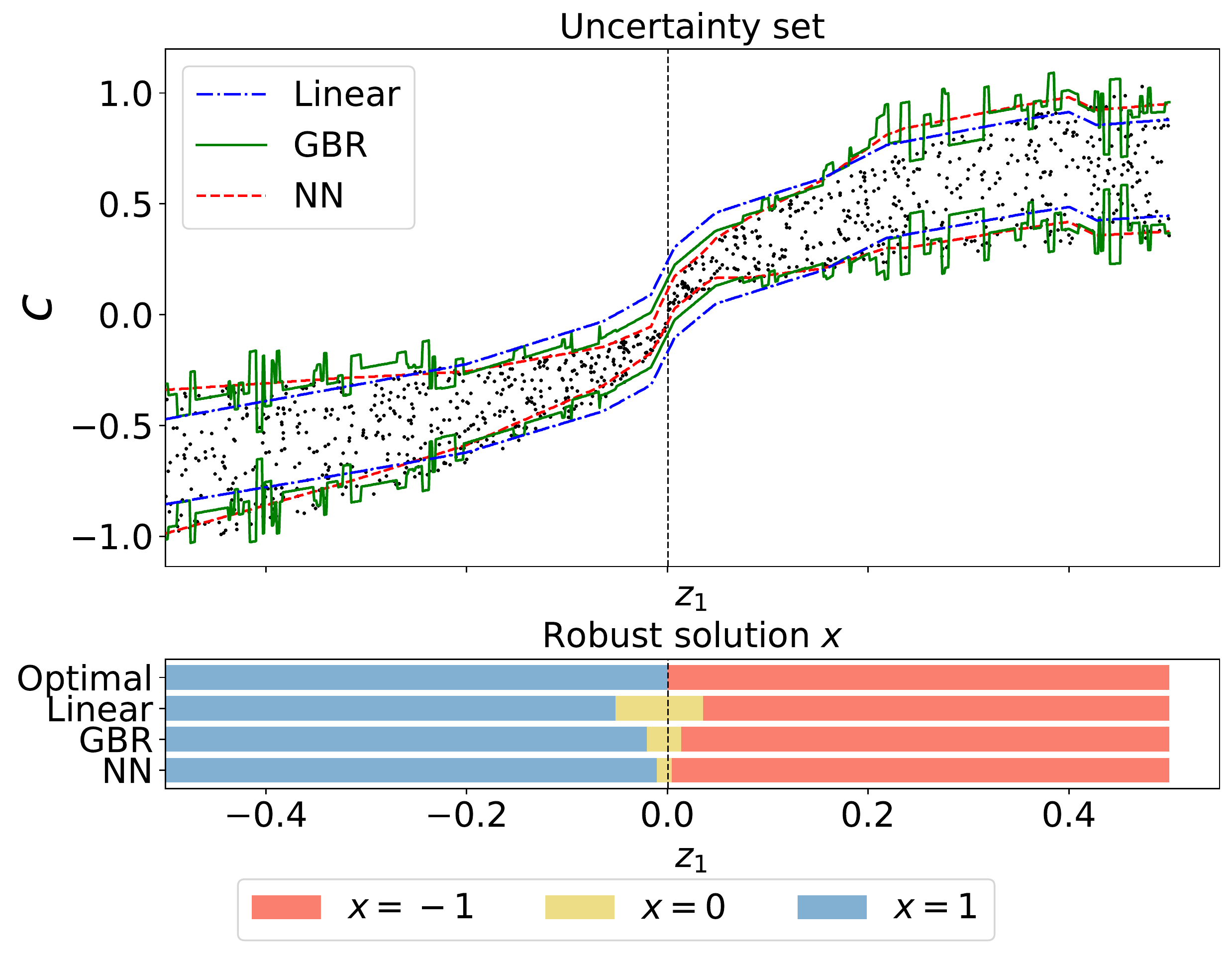}
    \caption{Uncertainty quantification}
    \label{fig:toy_diff_h}
  \end{subfigure}

\caption{Performance under different prediction and quantile regression models in PTC-B. The results corresponding to PTC-E are the same since PTC-B and PTC-E coincide in this one-dimensional case. The scattered black points indicate the training samples and the curves indicate the band for the uncertainty set. For the left panel, we fix the uncertainty quantification model $\hat{h}$ to be a neural network and implement different prediction models. For the right panel, we fix the prediction model $\hat{f}$ as a neural network and implement different uncertainty quantification models. KR stands for kernel regression, GBR stands for gradient boosting regression, and NN stands for neural network.}
  \label{fig:toy_diff_f_h}
\end{figure}

\subsection{Simple LP Visualization}

In the first experiment, we study the LP problem \eqref{LP:toy} where the optimal solution and the uncertainty set can be visualized. Here the covariates $z=(z_1,...,z_d)^\top$ where $z_i$ is sampled from \text{Unif}$[-0.5,0.5]$ independently for $i=1,...,d$, $\epsilon \sim \text{Unif}[-0.5,0.5]$, and $c=(\text{sign}(z_1)+\epsilon)\cdot \sqrt{| z_1|}$ (c is independent of $z_2,...,z_d$). We consider a risk level of $\alpha=0.8$ and dataset size $T=1000$. For our methods of PTC-B and PTC-E, we use 60\% of the data for training $\hat{f},$ 20\% for preliminary calibration ($\mathcal{D}_1$), and 20\% for final adjustment ($\mathcal{D}_2$). In Figure \ref{fig:toy_diff_f_h}, the robust solutions obtained from different prediction methods and different calibration methods are plotted. The results reinforce the discussions in Section \ref{sec:betterPredCali} that either a better prediction model or a better calibration model will improve the final performance of the robust optimization problem.

\begin{figure}[ht!]
  \centering
  \begin{minipage}{0.43\textwidth}
    \centering
    \includegraphics[width=\textwidth]{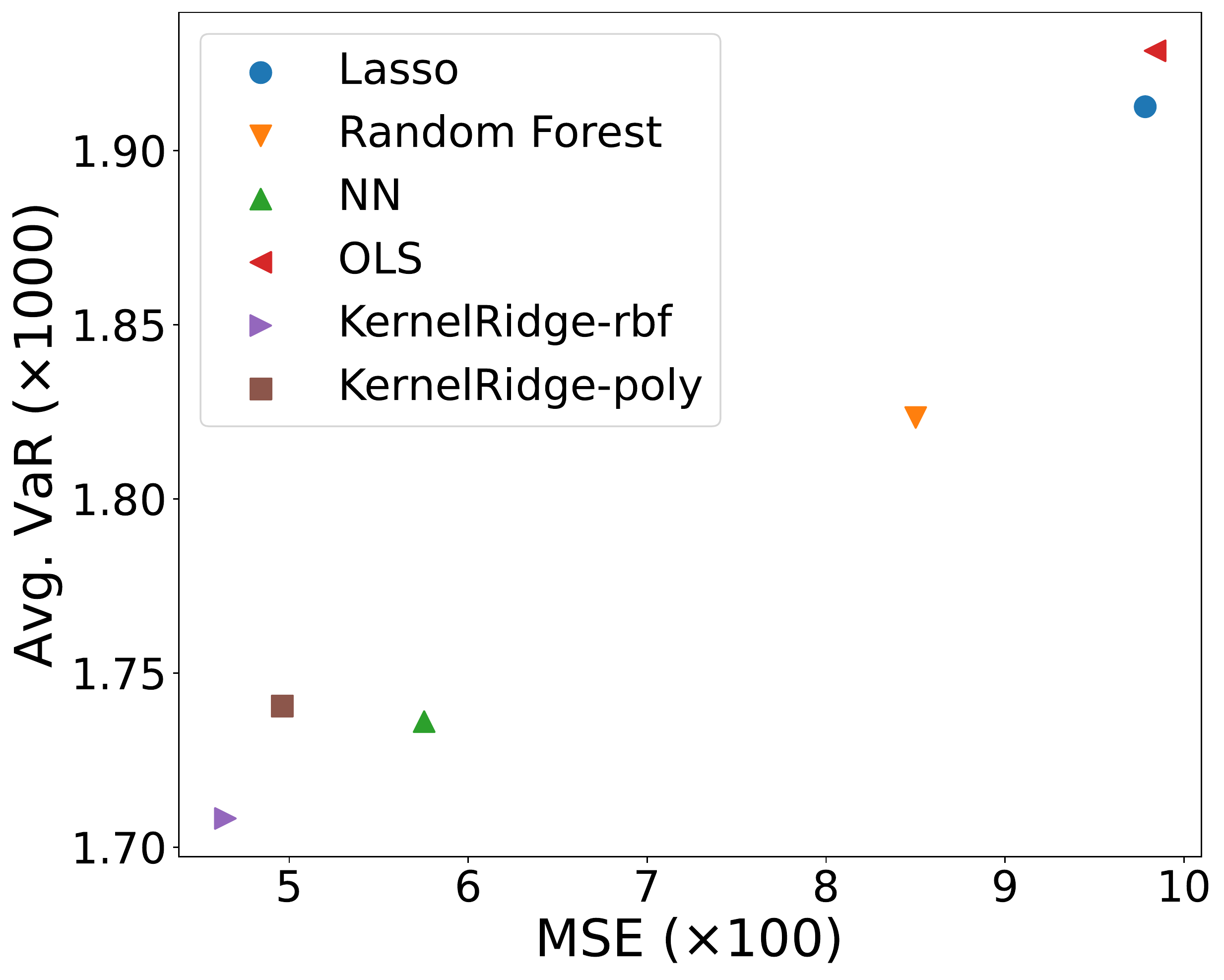}
  \caption{Value of better prediction}
  \label{fig:mse_vs_var}
  \end{minipage}
  \hfill
  \begin{minipage}{0.43\textwidth}
    \centering
    \includegraphics[width=\textwidth]{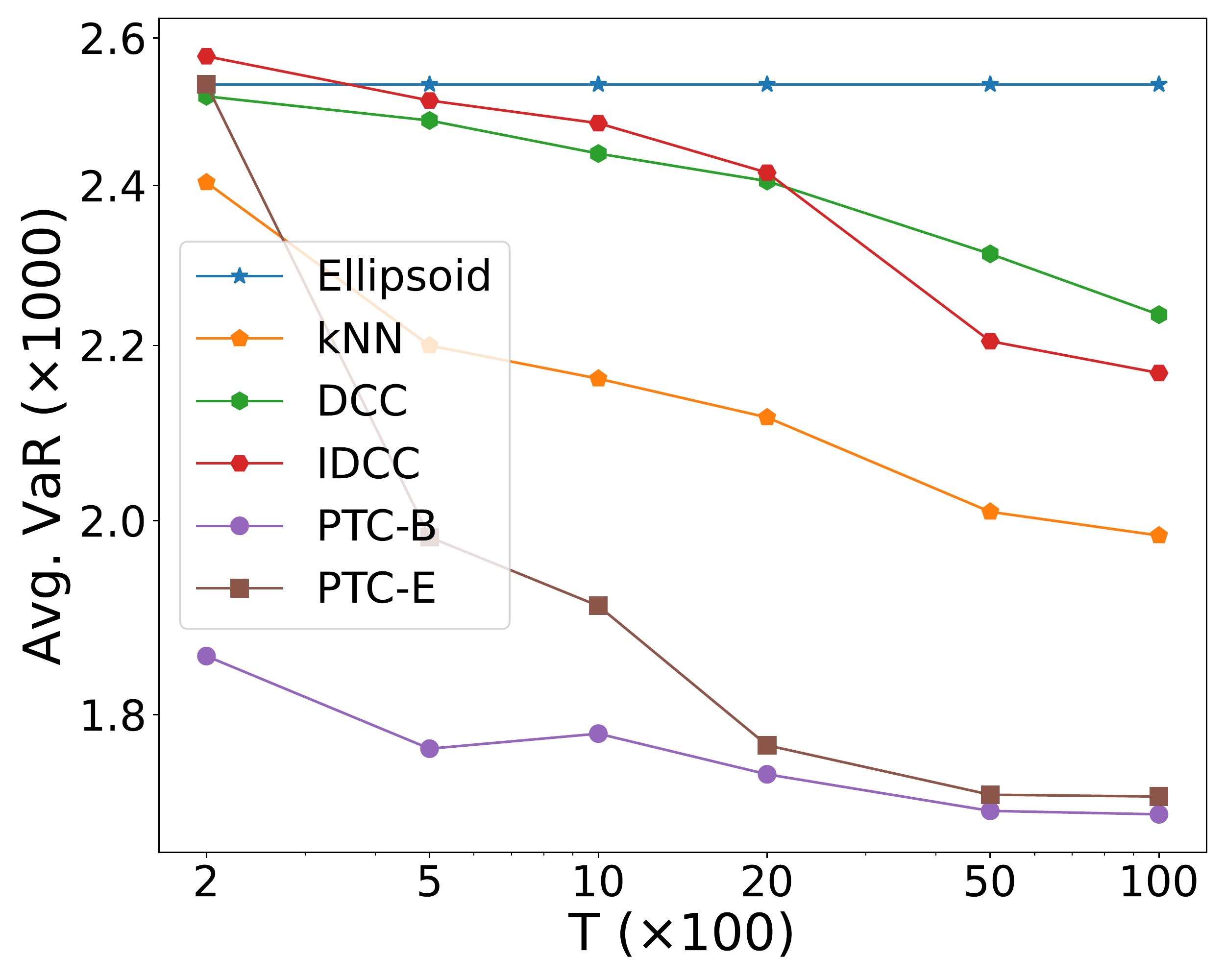}
  \caption{Sample efficiency}
  \label{fig:sp_sample}
  \end{minipage}
  \hfill
\end{figure}

\begin{table}[ht!]
  \centering
\caption{Average VaR and coverage on shortest path problems under different risk level settings. The average is reported based on 500 simulation trials.}
  \begin{minipage}{0.43\textwidth}
  \subcaption{Average VaR}
    \centering
    \scalebox{0.8}{
    \begin{tabular}{rrrrrrr}
    
\hline
   $\alpha$ &   Ellips. &   kNN &   DCC &   IDCC &   PTC-B &   PTC-E \\
\hline
    0.6  &      2447 &  1940 &  2218 &   2181 &    1650 &    1690 \\
    0.7  &      2488 &  1973 &  2273 &   2168 &    1683 &    1717 \\
    0.8  &      2535 &  2010 &  2312 &   2205 &    1708 &    1774 \\
    0.85 &      2563 &  2032 &  2286 &   2286 &    1735 &    1796 \\
    0.9  &      2598 &  2059 &  2392 &   2291 &    1769 &    1843 \\
    0.95 &      2646 &  2098 &  2373 &   2249 &    1832 &    1905 \\
\hline
\end{tabular}}
    \label{tab:shortest_path_VaR}
  \end{minipage}
  \hfill
  \begin{minipage}{0.43\textwidth}
  \subcaption{Average coverage}
    \centering
    \scalebox{0.8}{
    \begin{tabular}{rrrrrrr}
\hline
   $\alpha$ &   Ellips. &   kNN &   DCC &   IDCC &   PTC-B &   PTC-E \\
\hline
    0.6  &      0.63 &  0.65 &  0.62 &   0.63 &    0.56 &    0.63 \\
    0.7  &      0.73 &  0.65 &  0.7  &   0.73 &    0.64 &    0.71 \\
    0.8  &      0.82 &  0.65 &  0.8  &   0.84 &    0.76 &    0.8  \\
    0.85 &      0.87 &  0.65 &  0.85 &   0.89 &    0.79 &    0.85 \\
    0.9  &      0.92 &  0.65 &  0.89 &   0.93 &    0.88 &    0.91 \\
    0.95 &      0.96 &  0.65 &  0.94 &   0.97 &    0.93 &    0.97 \\
\hline
\end{tabular}}
\label{tab:shortest_path_coverage}
\end{minipage}
\label{tab:shortest_path_result}
\end{table}

\subsection{Shortest path problem}

Now we present numerical experiments for a shortest path problem on a $5\times 5$ grid with $40$ edges, and the cost of traveling through edge $i$ is $c_i$ for $i=1,...,n$. We follow the experiment setup in 
\cite{elmachtoub2022smart} and \cite{hu2022fast} where the details are deferred to Appendix \ref{sec:supp-experiment-sp}. In this experiment, we demonstrate several aspects of our algorithms. First, Figure \ref{fig:mse_vs_var} shows the averaged VaR of PTC-B solutions (under different prediction models $\hat{f}$) versus the Mean Squared Error (MSE) on the test data, which reveals a positive correlation between the predictive performance and the quality of the solution obtained in the downstream robust optimization task. We utilize the Kernel Ridge method with the RBF kernel—identified as the top-performing model in Figure \ref{fig:mse_vs_var}—as the predictive model and the Neural Network (NN) model as the preliminary calibration model for both PTC-B and PTC-E in the ensuing experiments (Figure \ref{fig:sp_sample} and Table \ref{tab:shortest_path_result}) to compare with other algorithms. Second, Figure \ref{fig:sp_sample} demonstrates that our algorithms exhibit better sample efficiency than the benchmark algorithms as we increase the number of training samples. Specifically, while the model DCC and IDCC take advantage of the expressiveness of a complicated deep learning architecture, they can be more costly in terms of the required training samples. Furthermore, we compare the performance under varying risk level settings in Table \ref{tab:shortest_path_result}. The Ellipsoid method does not utilize the covariates information, so although it has a coverage guarantee, it performs worst on the VaR. The kNN method \citep{ohmori2021predictive} has a good VaR performance, but it lacks confidence adjustment, so it is hard to adapt to different confidence levels. The two deep learning-based approaches and ours give good coverage guarantees, while we attribute the advantage of our algorithms to the flexibility in choosing the prediction model and the calibration model. 

\section{Conclusion}
In this paper, we study the robust contextual LP problem which lies at the intersection of contextual LP/predict-then-optimize and robust optimization. We develop two algorithms for the problem which output box- and ellipsoid shape uncertainty sets. The algorithms introduce a new perspective for contextualized uncertainty calibration, and it highlights the convenience brought by the disentanglement of the prediction and the uncertainty calibration. Yet, as mentioned earlier, the coverage guarantee in Proposition \ref{prop:eta} is with respect to a population/average sense. Developing algorithms with individual-level coverage guarantees deserves more future investigation. Also, we hope our work provides a starting point for future research on risk-sensitive and robust decision-making for the problem of contextual optimization and optimization with machine-learned advice.

\bibliographystyle{plainnat} 
\bibliography{sample.bib}

\appendix

\newpage




\newpage

\section{Supplementary for Experiment}\label{sec:supp-experiment}

\paragraph{Details of benchmarks.} For the DCC and IDCC algorithms proposed in \cite{chenreddy2022data}, we set the number of clusters to be 10. This choice of the number of clusters is large enough because there are clusters containing no training samples. The Ellipsoid method, which is also a benchmark used in \cite{chenreddy2022data}, fits a Gaussian distribution without contextual information and calibrates the radius to cover a proportion of $\alpha$ training samples. For the $k$NN conditional RO method \citep{ohmori2021predictive}, the hyperparameter, $k$, lacks the flexibility to adjust with respect to the number of training samples $T$, the dimensionality of the problem, and the risk level $\alpha$, so we empirically choose $k=\max\{\sqrt{T}, 2\times n\}$ here. Different algorithms utilize the same dataset to ensure fairness in each round of comparison. We redefine $T$ as its size in the experiment section. For our proposed Algorithm \ref{alg:BUQ} (PTC-B) and \ref{alg:EUQ} (PTC-E) under the PTC framework, the dataset is randomly split into 60\%, 20\%, and 20\% to do prediction, preliminary calibration, and final adjustment, respectively. 

\paragraph{Performance measurements.}
In the test phase, we randomly generate $z$ from its marginal distribution. Then, for each $z$, we generate a set with $1000$ objective vectors, $\{c^t\}_{t=1}^{1000}$, from $\mathbb{P}_{c|z}$. Given any robust solution $x$ conditioned on $z$, we estimate the $\text{VaR}_{\alpha}[c^\top x|z]$ by the $\alpha$-quantile of $\{{c^t}^{\top}x\}_{t=1}^{1000}$ and take its empirical expectation over $z$ as the average VaR. Similarly, we estimate the average coverage by the average frequency of $\{c\in \mathcal{U}_z\}$.

\subsection{Simple LP Visualization}\label{sec:supp-experiment-toy}

We select NN as the prediction and preliminary calibration models in both PTC-B and PTC-E algorithms and compare them to benchmarks. Here, because $c$ is only 1-dimensional, we replaced the DCC and IDCC algorithms, which are based on deep neural networks, with another clustering method, K-Means. Figure \ref{fig:toy_dim} shows the performance of different algorithms when $d$ increases ($d=1$, $4$, and $16$), and the plot is shown by fixing $z_2,...,z_d$ to $0$ in the test phase. The Ellipsoid method performs the worst and gives an unchanged uncertainty set for all different $z$ due to its ignorance of the contextual information. All algorithms degrade with an increase in irrelevant information, but localized methods such as kNN and K-Means exhibit more severe deterioration. Our proposed methods, however, maintain a certain degree of stability.

\begin{figure}[ht!]
  \centering
  \includegraphics[width=1.0\textwidth]{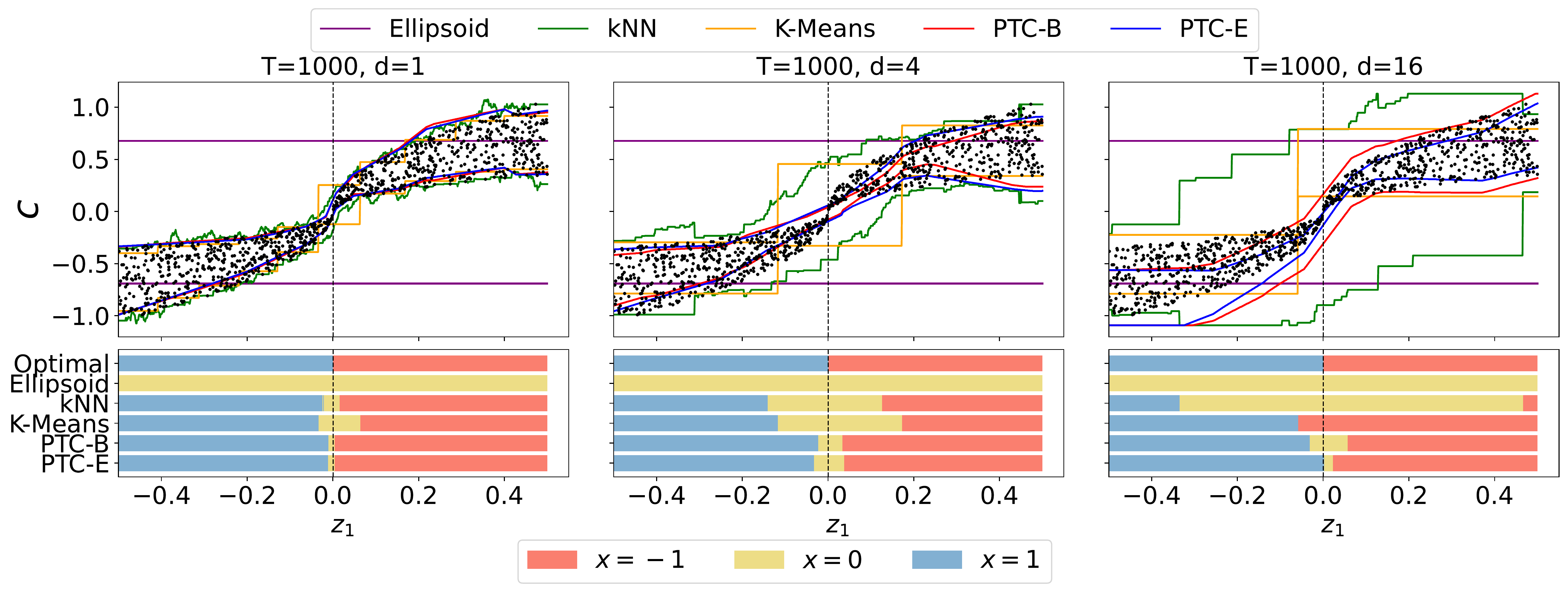}
  \caption{Comparison in performance deterioration with increasing dimensions}
  \label{fig:toy_dim}
\end{figure}

\subsection{Shortest Path Problem}\label{sec:supp-experiment-sp}
In this section, we consider a shortest path problem on a $5\times 5$ grid with $40$ edges, and the cost of traveling through edge $i$ is $c_i$ for $i=1,...,40$. After observing the covariates $z$ (e.g., weather conditions, day of the week), we aim to select a route from the left-top vertex to the right-bottom one to minimize the value-at-risk travel time, i.e., $\text{VaR}_{\alpha,z}(c^{\top} x)$. The data generation process is as follows. First, we generate a 0-1 matrix $\Theta\in\mathbb{R}^{40\times d}$ once with random seed $0$ and fix it to encode the parameters of the true model, where each entry is generated from a Bernoulli distribution with probability 0.5. Then, the covariate vector $\bm{z}^t\in\mathbb{R}^{d}$ is generated from $N(0,I_{d})$ for $t=1,...,T$. Given $\bm{z}^t$, the cost on edge $i$ is $c^t_i= [(\frac{1}{\sqrt{d}}(\Theta \bm{z}^t)+3)^{5}+1]\cdot \epsilon_i$, where $\epsilon_i \sim \text{Unif}[\frac{3}{4},\frac{5}{4}]$ independently for $i=1,...,40$. We implement $d=10$ here. To simulate the uneliminated irrelevant information, we set the last two dimensions of $z$ to be independent of $c$, so their corresponding column vectors in $\Theta$ are set to be zero. In the test phase, we evaluate the performance on $500$ $z$'s generated from the marginal distribution of $z$. We select the Kernel Ridge method with the RBF kernel as our prediction model and the NN model as our preliminary calibration model in both PTC-B and PTC-E.

In Figure \ref{fig:sp_scaled_VaR}, we show the box plot version of Table \ref{tab:shortest_path_result} with scaled VaR. Here, to alleviate the impact of randomness on $z$, we scale the VaR by the optimal value of the corresponding conditional stochastic program with expectation objective, i.e., $\text{VaR}_{\alpha}/{\text{Obj}^E}$, where $\text{Obj}^E=\max_{Ax=b, x\ge 0} \mathbb{E}[c|z]^{\top} x$. Figure \ref{fig:sp_scaled_VaR} once again demonstrates the superior performance of our algorithms in minimizing VaR as that in Table \ref{tab:shortest_path_result}.

\begin{figure}
\centering
\includegraphics[width=0.8\textwidth]{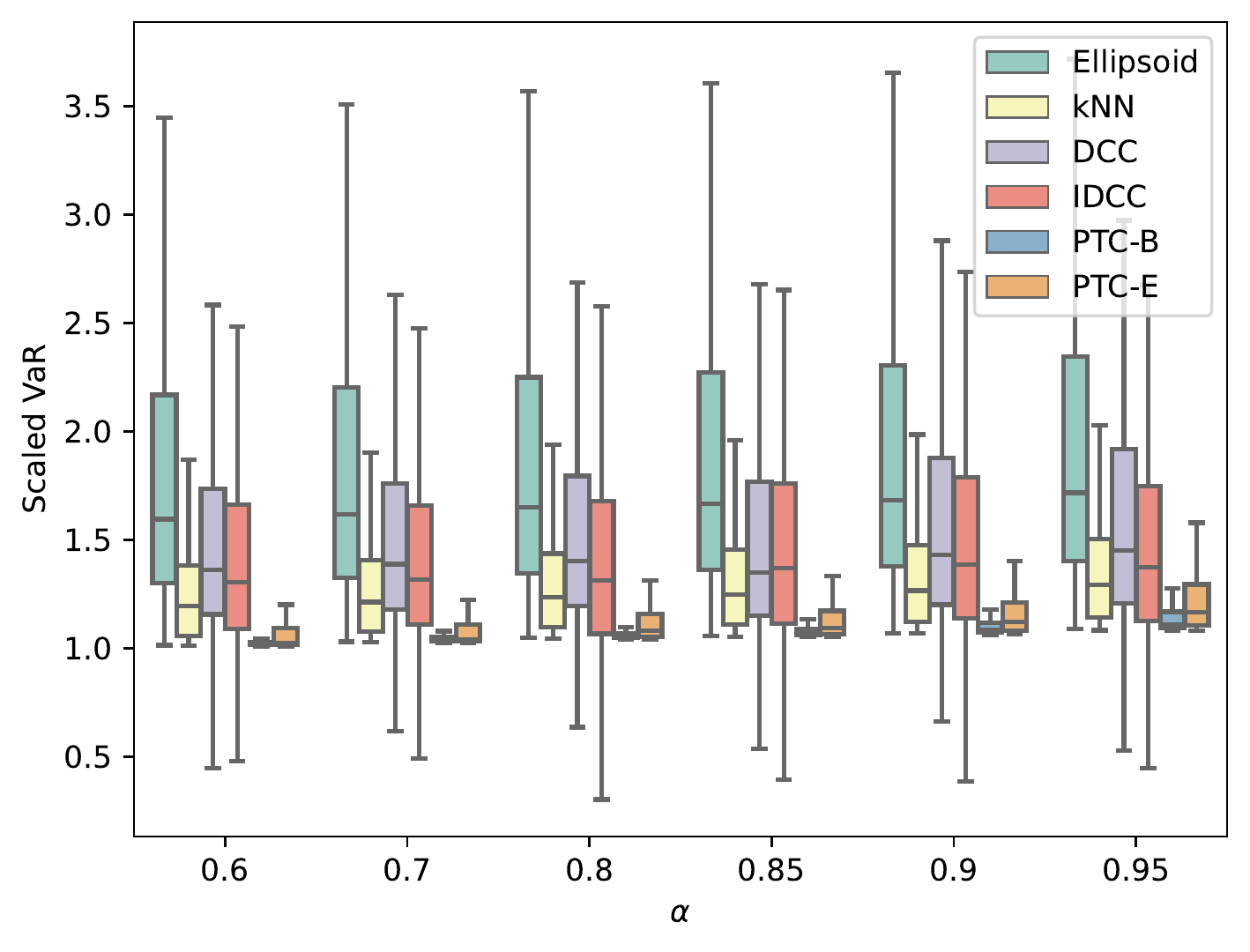}
\caption{Box plot of scaled VaR on shortest path problems under different risk level settings}
\label{fig:sp_scaled_VaR}
\end{figure}

\subsection{Fractional Knapsack Problem}

In this section, we consider a fractional knapsack problem and follow the setup in \cite{ho2022risk}. This problem can be interpreted as a maximization problem, wherein a consumer aims to maximize their utility by selecting items under a budget constraint. Specifically, to fit our formulation, the problem can be written as
\begin{equation*}
\min_{x\in\mathcal{X}} \text{VaR}_{\alpha,z}{(-c^{\top} x)}, \text{\ where } \mathcal{X}\coloneqq \{x\in [0,1]^n: p^{\top} x\le B\}.
\end{equation*}
Here, $c$ is the utility vector and we use a minimization version to represent a maximization problem. $p\in \mathbb{R}^n$ is some fixed positive vector indicating the price of items, and $B>0$ is the budget. The same as the shortest path problem, we randomly generate a 0-1 matrix $\Theta\in \mathbb{B}^{20\times d}$ once with the last 2 columns entirely filled as zeros to indicate the independence of the last dimensions of $z$ to $c$. Then, we generate multivariate independent $z$ from $\text{Unif}[0,4]^d$, and $c = (\Theta z)^2 \cdot \epsilon$, where $\epsilon$ is multivariate independent and following $ \text{Unif}[\frac{4}{5},\frac{6}{5}]^n$. We conduct experiments on data generated under $T=5000$, $d=10$, and $n=20$, and we choose the Kernel Ridge method with RBF kernel as our prediction model, and the NN as our preliminary calibration model in both the PTC-B and PTC-E.

Given a new covariate $z$, the conditional RO algorithm will output an uncertainty set. Under different constraints, the worst-case scenario corresponding to the uncertainty set may vary, so we compare the performance under randomly generated constraints. We generate the price vector $p$ with each entry as an integer uniformly sampled from $[1,1000]$. Then, we randomly generated $u\sim \text{Unif}[0,1]$, and the budget $B$ is sampled from $\text{Unif}[\max_j p_j, \bm{1}^{\top}p-u\cdot \max_j p_j]$. Here, we randomly generated $10$ sets of constraints $\{(p^i,B^i)\}_{i=1}^{10}$ (with random seed $0$) and fix them. In the test phase, the average VaR and coverage are taken over $100$ $z$'s and $10$ constraints ($1000$ instances in total).

Table \ref{tab:knapsack_result} associated with Figure \ref{fig:knapsack_scaled_VaR} displays the comparison results under varying risk levels and demonstrates the advantages of our proposed algorithms again via the lower VaR and proper coverage that closely matches the risk level, where the scaled VaR in Figure \ref{fig:knapsack_scaled_VaR} is the VaR divided by $\max_{x\in\mathcal{X}} \mathbb{E}[c|z]^{\top} x$. The contextual-ignorance method, Ellipsoid, performs the worst with the total utilities remaining $0$, which shows its excessive conservatism by specifying the worst-case utility of each item to be non-positive and selecting nothing. The DCC and IDCC, as well as our PTC-B and PTC-E methods, exhibit better performance than the kNN method due to our utilization of expressive NN models, which are not limited to utilizing only local information. By virtue of the flexibility of our algorithms in the model selection, the chosen Kernel Ridge regression model is more suitable than the clustering-based DCC and IDCC methods for this problem with continuous covariates, so our algorithms achieve relatively better performance than theirs.


\begin{table}[ht!]
 \caption{Average VaR and coverage on fractional knapsack problems under different risk level settings. The result is reported based on 500 simulation trials.}
  \centering
  \begin{minipage}{0.45\textwidth}
  \subcaption{Average VaR}
    \centering
    \scalebox{0.8}{
    \begin{tabular}{rrrrrrr}
\hline
   $\alpha$ &   Ellips. &   kNN &   DCC &   IDCC &   PTC-B &   PTC-E \\
\hline
    0.6  &         0 & -1175 & -1256 &  -1131 &   -1310 &   -1310 \\
    0.7  &         0 & -1148 & -1241 &  -1111 &   -1298 &   -1297 \\
    0.8  &         0 & -1137 & -1230 &  -1090 &   -1283 &   -1282 \\
    0.85 &         0 & -1128 & -1223 &  -1086 &   -1273 &   -1273 \\
    0.9  &         0 & -1114 & -1213 &  -1087 &   -1261 &   -1261 \\
    0.95 &         0 & -1102 & -1180 &  -1092 &   -1242 &   -1243 \\
\hline
\end{tabular}}
    
  \end{minipage}
  \hfill
  \begin{minipage}{0.45\textwidth}
  \subcaption{Average coverage}
    \centering
    \scalebox{0.8}{
    \begin{tabular}{rrrrrrr}
\hline
   $\alpha$ &   Ellips. &   kNN &   DCC &   IDCC &   PTC-B &   PTC-E \\
\hline
    0.6  &      0.66 &  0.96 &  0.75 &   0.65 &    0.57 &    0.65 \\
    0.7  &      0.74 &  0.96 &  0.79 &   0.74 &    0.73 &    0.71 \\
    0.8  &      0.82 &  0.96 &  0.86 &   0.83 &    0.79 &    0.82 \\
    0.85 &      0.86 &  0.96 &  0.89 &   0.89 &    0.88 &    0.86 \\
    0.9  &      0.91 &  0.96 &  0.92 &   0.93 &    0.91 &    0.91 \\
    0.95 &      0.96 &  0.96 &  0.97 &   0.95 &    0.92 &    0.97 \\
\hline
\end{tabular}}  
  \end{minipage}
  \label{tab:knapsack_result}

\end{table}

\begin{figure}
\centering
\includegraphics[width=0.8\textwidth]{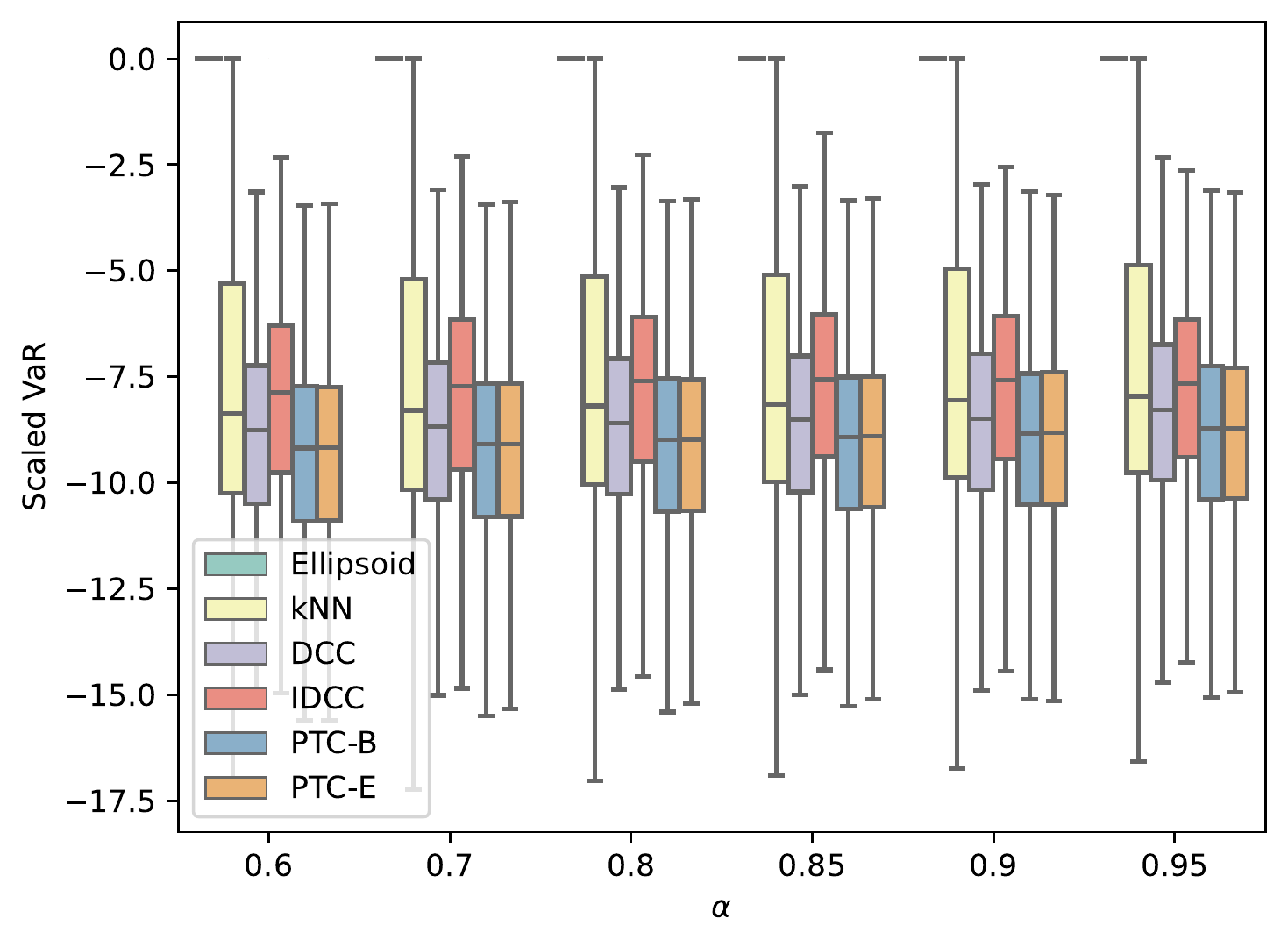}
\caption{Box plot of scaled VaR on knapsack problems under different risk level settings}
\label{fig:knapsack_scaled_VaR}
\end{figure}

\section{Supplementary for Section \ref{sec:ro}}

This section provides discussions about the individual coverage and proofs for theoretical statements in Section \ref{sec:ro}. In Section \ref{appen:indi_cover}, we will provide a brief literature review about global coverage and individual coverage, and then, provide an algorithm to achieve the individual coverage with corresponding theoretical results. In Sections \ref{sec:B_eta}, \ref{sec:B_pred}, and \ref{sec:B_cali}, we will show Propositions \ref{prop:eta}, \ref{prop:betterPredict}, \ref{prop:cali}, respectively. The proof will utilize some useful lemmas in Section \ref{sec:leminfo}.

\subsection{Individual Coverage}
\label{appen:indi_cover}

The uncertainty quantification literature arises from two communities. It is referred to as "conformal prediction" in the field of statistics and "model calibration" in the field of machine learning. In the following, we briefly review the results of global and individual coverage in uncertainty quantification in those two communities, respectively.

\paragraph{Conformal prediction:} Milestone works on the topic,  \cite{lei2018distribution,barber2023conformal}, focus on global coverage. \cite{angelopoulos2021gentle}, a survey paper on conformal prediction, also focuses on the global guarantee. For conditional/individual coverage, \cite{foygel2021limits,vovk2012conditional} state an “impossible triangle” in uncertainty quantification: (i) the coverage result is built for conditional coverage; (ii) it makes no assumptions on the underlying distribution; (iii) it has finite-sample guarantee rather than asymptotic consistency. Essentially, in our paper, Algorithms \ref{alg:BUQ} and \ref{alg:EUQ} basically fail to meet (i), but they barely rely on any assumptions on the underlying distribution and enjoy a finite-sample guarantee, that is, achieving (ii) and (iii). Later in this section, we will introduce a nonparametric algorithm that achieves conditional coverage, and the coverage can be finite-sample, i.e., achieving (i) and (iii), while this result requires additional assumptions on the underlying distribution, which is a violation of (ii).

\paragraph{Model calibration/recalibration:} \cite{lakshminarayanan2017simple,cui2020calibrated,chung2021beyond,kuleshov2018accurate,zhao2020individual} are a few state-of-the-art papers on calibrating the uncertainty of a regression model. We note that none of these papers claim to achieve a conditional coverage guarantee. Rather, they ensure a global guarantee and strive for better empirical conditional coverage. In particular, \cite{zhao2020individual} gives a pessimistic result on conditional coverage that it is impossible to verify whether an algorithm achieves individual coverage. However, the proof is based on a very special case, and it is possible to rule it out with some mild conditions.

In the following, we present Algorithm \ref{alg:indi} under our predict-then-calibrate framework, which can achieve individual coverage. 

\begin{algorithm}[ht!]
    \caption{Uncertainty Quantification with Individual Coverage}
    \label{alg:indi}
    \begin{algorithmic}[1] 
    \State Input: Validation data $\mathcal{D}_{val}$, ML model $\hat{f}$, kernel choice $K_{h}(\cdot,\cdot)$, bandwidth $h$, target covariate $z_0$
    \State For each $t\in \mathcal{D}_{val}$, let 
        $${r}_t \coloneqq c_t - \hat{f}(z_t)$$
    \State Define the estimation of the conditional distribution $r|z=z_0$:
    $$\hat{P}_{r|z_0} :=\sum_{t\in \mathcal{D}_{val}} \frac{K_{h}(z_t,z_0)\delta_{r_t}}{\sum_{t\in \mathcal{D}_{val}}K_{h}(z_t,z_0)}, $$
    where 
    $\delta_{r}$ denotes the delta distribution on $r$
    \State Choose a minimal $\eta>0$ such that  
    \[
        \hat{P}_{r|z_0}\left(\left\{r:\|r\|_2\leq\eta\right\}\right)
        \geq
        \alpha
    \]
\State Output: Uncertainty set $\mathcal{U}_{\alpha}(z_0)=\left\{c:\|c-\hat{f}(z_0)\|_2\leq\eta\right\}$
\end{algorithmic}
\end{algorithm}

Here, this uncertainty quantification algorithm is mainly based on a nonparametric estimation in \cite{hall1999methods,liu2023distribution}, which is also similar to the nonparametric estimation in Algorithm \ref{alg:contextual_DRO}. The estimator in Step 3 in Algorithm \ref{alg:indi} covers a wide range of non-parametric estimators with different kernels, including $k$-nearest neighbors, and it can approximate the conditional distributions of the residuals $r|z=z_0$ with a sublinear convergence rate under some Lipschitz conditions. As a result, by using this estimator, we can directly calculate the size of the uncertainty set so that the estimated conditional distribution attains individual coverage. That is Step 4 in Algorithm \ref{alg:indi}. In the following, we will provide a statement without proof since the analysis is almost the same as the proof of Proposition \ref{thm:dro}. We also refer to \cite{hall1999methods,liu2023distribution} for a detailed analysis.
\begin{proposition}
\label{prop:indi}
Assume there exists a constant $L'$ such that the following Lipschitz condition holds for any $z,z'$
\[\text{TV}(P_{r|z},P_{r|z'})\le L\|z-z'\|_2^s,\]
where $\text{TV}(\cdot,\cdot)$ denotes the total variation distance between two distributions, and $P_{r|z},P_{r|z'}$ denotes the conditional distribution of the residuals given covariates $z$ or $z'$, respectively. Then under mild boundedness assumptions on the distribution, the uncertainty set $\mathcal{U}_{\alpha}(z_0)$ output from Algorithm \ref{alg:indi} satisfies
    \[
    \text{TV}(\hat{P}_{r|z_0},P_{r|z_0})   \leq O\left(T^{-\frac{s}{2(s+d)}}\log T\right).
    \]
\end{proposition}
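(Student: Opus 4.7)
The plan is a standard bias--variance decomposition for the Nadaraya--Watson-type kernel estimator. I would introduce a deterministic ``population'' counterpart
\[
\tilde{P}_{r|z_0} := \sum_{t\in \mathcal{D}_{\text{val}}} w_t(z_0)\, P_{r|z_t},
\qquad w_t(z_0) := \frac{K_h(z_t,z_0)}{\sum_{t'\in\mathcal{D}_{\text{val}}}K_h(z_{t'},z_0)},
\]
and split by the triangle inequality into a variance piece $\text{TV}(\hat{P}_{r|z_0}, \tilde{P}_{r|z_0})$ and a bias piece $\text{TV}(\tilde{P}_{r|z_0}, P_{r|z_0})$, each of which depends on the bandwidth $h$ in opposite directions.

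The bias is the easy half. Since $K_h$ concentrates the weights $w_t(z_0)$ on covariates $z_t$ with $\|z_t - z_0\|_2 \lesssim h$ (tail contributions for kernels of unbounded support being absorbed into the boundedness assumption), convexity of total variation together with the assumed Lipschitz hypothesis immediately gives
\[
\text{TV}(\tilde{P}_{r|z_0}, P_{r|z_0}) \leq \sum_{t\in \mathcal{D}_{\text{val}}} w_t(z_0)\, L\|z_t-z_0\|_2^s \lesssim L\, h^s.
\]

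For the variance I would condition on the realized covariates and view $\hat{P}_{r|z_0}$ as a weighted empirical average of independent draws $r_t \sim P_{r|z_t}$. A local concentration argument, uniform over the class of events witnessing total variation, yields a variance term that decays polynomially in the effective local sample size $Th^d$ up to a $\log T$ factor; one also needs a routine concentration bound for the random denominator $\sum_{t'} K_h(z_{t'},z_0)$, controlled by the density lower bound on $z$ from the mild boundedness assumptions. Choosing the bandwidth $h \asymp T^{-1/(2(s+d))}$ to balance bias against variance then delivers the announced rate $O(T^{-s/(2(s+d))}\log T)$.

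The main obstacle is the variance step: total variation is a supremum over all measurable sets, so a naive Hoeffding/Bernstein inequality is insufficient. The fix is to exploit the boundedness assumption on $r$ to reduce to a finite-VC or otherwise suitably discretized class of events, or equivalently to follow the nonparametric kernel-density-estimation route of \cite{hall1999methods, liu2023distribution} and invoke their concentration machinery directly. Once this step is in place, the overall structure mirrors that of the proof of Proposition \ref{thm:dro} in Appendix \ref{sec:DRO}, with the conditional-mean estimand there replaced by the full conditional distribution here, which is precisely the parallel the authors indicate.
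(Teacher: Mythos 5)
The paper gives no proof of this proposition: it explicitly states the result ``without proof since the analysis is almost the same as the proof of Proposition \ref{thm:dro},'' deferring details to \cite{hall1999methods,liu2023distribution}. Your sketch follows exactly the route the authors indicate --- the kernel-weighted bias--variance decomposition used for Proposition \ref{prop:krnlest}, with the bias half handled by convexity of total variation plus the Lipschitz hypothesis to obtain the $Lh^s$ term, and the bandwidth $h \asymp T^{-1/(2(s+d))}$ balancing the two pieces. That half is sound and matches the paper's intent.

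The genuine gap is the variance step, which you correctly flag as the main obstacle but do not resolve, and the difficulty is sharper than you state. The estimator $\hat{P}_{r|z_0}$ is a finite mixture of Dirac masses $\delta_{r_t}$, while the target $P_{r|z_0}$ is (as in Assumption \ref{asmpt:r}(b)) a continuous distribution with a density, so $\text{TV}(\hat{P}_{r|z_0},P_{r|z_0})=1$ identically; no concentration argument, VC-based or otherwise, can establish the stated bound for the raw estimator in full total variation. The fix you gesture at --- restricting to a VC class of events --- is the right one, but it must be made explicit and it changes the statement: the supremum has to be taken over a restricted class such as the centered balls $\{r:\|r\|_2\le\eta\}$ that Algorithm \ref{alg:indi} actually evaluates, for which a Dvoretzky--Kiefer--Wolfowitz/VC concentration bound applied to the weighted empirical measure (conditional on the covariates, with effective local sample size of order $Th^d$ as in Lemma \ref{lem:efsmpl}) does yield the announced $O(T^{-s/(2(s+d))}\log T)$ rate; the alternative is to smooth the estimator. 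Simply ``invoking the concentration machinery directly'' of the cited references does not address this discrete-versus-continuous mismatch, and since the paper supplies no proof of its own, this is precisely the step a complete argument would have to furnish.
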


Moreover, it is worth noting that, through this approximation of the conditional distributions, we can extend our framework in tractable optimizing conditional value-at-risk besides VaR in \eqref{lp:condi_var}. Specifically, one can generate a number of independent samples $\{\tilde{c}_k\}_{k=1}^{K}$ from the approximation of the conditional distribution $c|z=z_0$. Then to optimize the following empirical CVaR objective (\cite{rockafellar2000optimization})
\begin{align*}
 \min_{x,\gamma} \ &  \gamma +\frac{1}{K(1-\alpha)}\sum\limits_{k=1}^{K}\left(\tilde{c}_t^\top x-\gamma\right)^{+}\\
    \text{s.t.\ } &  Ax=b, \ x\ge 0.   
\end{align*}
This is a convex program that has an equivalent linear program and is thus computationally tractable.

To close this section, we want to emphasize that the Lipschitz assumption in Proposition \ref{prop:indi} cannot be verified from the data prior. Also, we prefer not to claim we have solved the conditional coverage problem for two reasons: (i) We do not want to give an impression to the robust optimization community that conditional coverage is an easy task to achieve. One should proceed with caution when applying the nonparametric algorithm; (ii) We don not want to give an impression to the statistics and ML communities working on uncertainty quantification problems that we underestimate the difficulty of the conditional coverage and lack of understanding of the related literature.

\subsection{Proof for Proposition \ref{prop:eta} and Corollary \ref{coro:roalpha}}
\label{sec:B_eta}
In this section, we first show Proposition \ref{prop:eta}, then show Corollary \ref{coro:roalpha} based on the result of Proposition \ref{prop:eta}.
\begin{rprop}[\ref{prop:eta}]
    For a new sample $(c,A,b,z)$ from distribution $\mathcal{P}$, denote the uncertainty sets output from Algorithm \ref{alg:BUQ} and Algorithm \ref{alg:EUQ} by $\mathcal{U}^{(1)}_\alpha(z)$ and $\mathcal{U}^{(2)}_\alpha(z)$, respectively. The following inequalities hold for $k=1,2$
    \begin{equation*}
    \alpha\leq\prob\left(c\in \mathcal{U}^{(k)}_\alpha(z) \right)
    \leq\alpha+\frac{1}{|\mathcal{D}_2|+1}
    \end{equation*}
    where the probability is with respect to the new sample $(c,A,b,z)$ and dataset $\mathcal{D}_2$.
    \end{rprop}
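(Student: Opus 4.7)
The plan is to cast both algorithms into a single split conformal prediction framework by defining a common scalar nonconformity score for each, and then invoke the exchangeability argument that underlies conformal prediction.

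First, I would encode the containment criterion in the second calibration step as a score function. For Algorithm \ref{alg:BUQ}, define
\[
s^{(1)}(c,z) \coloneqq \max_{i=1,\ldots,n} \frac{|c_i - \hat{f}(z)_i|}{\hat{h}(z)_i},
\]
so that the element-wise condition $\underline{c}_t(\eta)\le c_t\le \bar{c}_t(\eta)$ in \eqref{scaleCaliber1} is exactly $s^{(1)}(c_t,z_t)\le \eta$. For Algorithm \ref{alg:EUQ}, define
\[
s^{(2)}(c,z) \coloneqq \frac{\sqrt{(c-\hat{f}(z))^\top \hat{\Sigma}^{-1}(c-\hat{f}(z))}}{\hat{g}(z)},
\]
so that the indicator in \eqref{scaleCaliber2} is exactly $\mathbbm{1}\{s^{(2)}(c_t,z_t)\le \eta\}$. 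In both cases the output uncertainty set is precisely $\mathcal{U}^{(k)}_\alpha(z)=\{c:s^{(k)}(c,z)\le \eta\}$, so the event $c\in \mathcal{U}^{(k)}_\alpha(z)$ is the event $s^{(k)}(c,z)\le \eta$.

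Second, I would observe that the minimal $\eta$ selected in step two coincides with an order statistic of the calibration scores. Letting $N\coloneqq|\mathcal{D}_2|$ and $S_t\coloneqq s^{(k)}(c_t,z_t)$ for $t\in\mathcal{D}_2$, the algorithm returns the $\lceil \alpha(N+1)\rceil$-th smallest value of $\{S_t\}_{t\in\mathcal{D}_2}$ whenever $\alpha(N+1)\le N$, and $\max_{t\in\mathcal{D}_2} S_t$ otherwise. Third, I would establish exchangeability: conditional on the training data $\mathcal{D}_l$ and the first fold $\mathcal{D}_1$, the maps $\hat{f},\hat{h},\hat{g},\hat{\Sigma}$ are fixed, so $s^{(k)}$ is a deterministic measurable function. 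The samples $\{(c_t,z_t):t\in\mathcal{D}_2\}$ together with the new test point $(c,z)$ are then i.i.d.\ from $\mathcal{P}$, and therefore the $N+1$ scores $S_1,\ldots,S_N,S_{\text{new}}$ are exchangeable.

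Fourth, the standard split conformal argument finishes the proof. By exchangeability, the rank of $S_{\text{new}}$ among the $N+1$ scores is uniformly distributed on $\{1,\ldots,N+1\}$ up to ties, so
\[
\prob\bigl(S_{\text{new}}\le \eta\bigr) \ \ge\ \frac{\lceil \alpha(N+1)\rceil}{N+1}\ \ge\ \alpha,
\]
which is the lower bound. Under the mild assumption that the scores have no ties almost surely, the same probability equals $\lceil \alpha(N+1)\rceil/(N+1)\le \alpha+1/(N+1)$, giving the upper bound; in the boundary case $\alpha(N+1)>N$ the result holds trivially because the probability is at most $1\le \alpha+1/(N+1)$. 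The main obstacles are bookkeeping rather than conceptual: carefully handling ties for the upper bound (either by perturbation or by assuming a continuous conditional distribution of $S_{\text{new}}$), checking that $\hat{h}(z)_i$ and $\hat{g}(z)$ are strictly positive so that $s^{(1)},s^{(2)}$ are well-defined, and verifying that conditioning on $\mathcal{D}_l\cup\mathcal{D}_1$ preserves the exchangeability of the remaining samples with the test point.
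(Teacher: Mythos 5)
Your proposal is correct and follows essentially the same route as the paper: your scores $s^{(k)}(c_t,z_t)$ are exactly the paper's per-sample quantities $\eta_t$, the selected $\eta$ is identified as the $\lceil\alpha(|\mathcal{D}_2|+1)\rceil$-th order statistic of the calibration scores, and the coverage bound follows from exchangeability of the $|\mathcal{D}_2|+1$ scores with the test point, conditional on $\mathcal{D}_l\cup\mathcal{D}_1$. If anything, you are slightly more careful than the paper, which asserts exact equality of the coverage probability with $\lceil\alpha(|\mathcal{D}_2|+1)\rceil/(|\mathcal{D}_2|+1)$ without flagging the no-ties condition that you correctly note is needed for the upper bound.
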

\begin{proof}
    Denote $n=|\mathcal{D}_2|$ as the number of samples in the $\mathcal{D}_2$. In the following, we show for $k=1,2$,
    \begin{equation}
        \label{eq:cover}
        \prob\left(c\in \mathcal{U}^{(k)}_\alpha(z) \right)
        =\frac{\lceil \alpha (n+1)\rceil}{n+1},
    \end{equation}
    where $\lceil \cdot\rceil$ denotes the ceiling function. If it holds, we can prove this proposition by the following inequalities
    \begin{align*}
        \frac{\lceil \alpha (n+1)\rceil}{n+1} &\geq \frac{\alpha (n+1)}{n+1}=\alpha,\\
        \frac{\lceil \alpha (n+1)\rceil}{n+1} &\leq \frac{\alpha n+2}{n+1}=\alpha+\frac{1}{n+1}.
    \end{align*}
    
    To show \eqref{eq:cover}, we will use the exchangeability of the dataset $\mathcal{D}_2$ and new sample $(c,A,b,z)$. Specifically, denote $\{c_t\}_{t=1}^{n}$ as the samples in $\mathcal{D}_2$, and denote $c_{n+1}$ as the new sample. Then, we define $\eta_t$ for $t=1,...,n+1$ with respect to Algorithms \ref{alg:BUQ} or \ref{alg:EUQ} as follows
    \begin{align*}
            \eta_t = 
            \begin{cases}
                \min\left\{\eta\geq0: \underline{c}_t(\eta)\leq c_t\leq \bar{c}_t(\eta)\right\}, &\text{if $k=1$,}\\
                \min\left\{\eta\geq0 \sqrt{(c_t - \hat{f}(z_t))^\top \hat{\Sigma}^{-1}(c_t - \hat{f}(z_t))} \le \eta \hat{g}(z_t)\right\}, &\text{if $k=2$.}
            \end{cases}         
    \end{align*}
    In the proof below, we only show \eqref{eq:cover} for Algorithm \ref{alg:BUQ}, and the proof for Algorithm \ref{alg:EUQ} can be obtained similarly. Based on the choice of the uncertainty set and definition of $\eta$ in Algorithm \ref{alg:BUQ}, we have that $\eta$ is the $\frac{\lceil \alpha (n+1)\rceil}{n}$-upper quantile in $\{\eta_t\}_{t=1}^{n}$. This choice of $\eta$ also implies $c_{t+1}\in\mathcal{U}_\alpha^{(1)}$ if $\eta_{n+1}\leq\eta$, which means
    \begin{align}
        \label{eq:prob_eta}
        \mathbb{P}\left( c_{n+1}\in\mathcal{U}_\alpha^{(1)}
        \right)
        =
        \mathbb{P}\left( \eta_{n+1} \text{ is among the $\lceil \alpha (n+1)\rceil$-smallest of }\{\eta_t\}_{t=1}^{n+1}
        \right).
    \end{align}
    Then, it is sufficient to show
    \begin{align*}
        \mathbb{P}\left( \eta_{n+1} \text{ is among the $\lceil \alpha (n+1)\rceil$-smallest of }\{\eta_t\}_{t=1}^{n+1}
        \right)
        =
        \frac{\lceil \alpha (n+1)\rceil}{n+1}.
    \end{align*}
    To this end, because $\{c_t\}_{t=1}^{n+1}$ are i.i.d., we have 
    \begin{align}
        \label{eq:exchange}
        &\mathbb{P}\left( \eta_{n+1} \text{ is among the $\lceil \alpha (n+1)\rceil$-smallest of }\{\eta_t\}_{t=1}^{n+1}
        \right)\\
        =&\mathbb{P}\left( \eta_{t} \text{ is among the $\lceil \alpha (n+1)\rceil$-smallest of }\{\eta_t\}_{t=1}^{n+1}
        \right)\nonumber
    \end{align}
    for all $t=1,...,n,$, which is also referred to as the exchangability. Then, we can finish the proof by the following equalities:
    \begin{align*}
        &\mathbb{P}\left( \eta_{n+1} \text{ is among the $\lceil \alpha (n+1)\rceil$-smallest of }\{\eta_t\}_{t=1}^{n+1}
        \right)\\
        =&
        \frac{1}{n+1}\sum\limits_{t=1}^{n+1} \mathbb{P}\left( \eta_{t} \text{ is among the $\lceil \alpha (n+1)\rceil$-smallest of }\{\eta_t\}_{t=1}^{n+1}
        \right)\\
        =&
        \frac{1}{n+1}\mathbb{E}\left[
            \sum\limits_{t=1}^{n+1}1\left\{
            \eta_{t} \text{ is among the $\lceil \alpha (n+1)\rceil$-smallest of }\{\eta_t\}_{t=1}^{n+1}
            \right\}
        \right]\\
        =&
        \frac{\lceil \alpha (n+1)\rceil}{n+1},
    \end{align*}    
    where the first equality is obtained by the exchangeability \eqref{eq:exchange}, the second equality is obtained by the definition of the expectation, and the last equality is obtained by the fact that 
    \begin{align*}
        \sum\limits_{t=1}^{n+1}1\left\{
            \eta_{t} \text{ is among the $\lceil \alpha (n+1)\rceil$-smallest of }\{\eta_t\}_{t=1}^{n+1}\right\}
            =
            \lceil \alpha (n+1)\rceil.
    \end{align*}
    
    \end{proof}

Next, we show Corollary \ref{coro:roalpha}:
\begin{rcoro}[\ref{coro:roalpha}]
For a new sample $(c,A,b,z)$ from distribution $\mathcal{P}$, denote the uncertainty set output from Algorithm \ref{alg:BUQ} or Algorithm \ref{alg:EUQ} by $\mathcal{U}_\alpha(z)$.
Let $x^*(z)$ and OPT$(z)$ be the optimal solution and the optimal value of LP$(\mathcal{U}_\alpha(z))$ \eqref{opt:tractable}. Then we have
    \begin{align*}
       \prob\left(c^\top x^*(z) \le
         \text{OPT}(z)\right) \ge \alpha
    \end{align*}
where the probability is with respect to the new sample $(c,A,b,z)$ and $\mathcal{D}_2.$   
\end{rcoro}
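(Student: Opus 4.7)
The plan is to reduce the robustness guarantee to the coverage guarantee already established in Proposition \ref{prop:eta}. The key observation is purely definitional: by the formulation of LP$(\mathcal{U}_\alpha(z))$ in \eqref{opt:tractable}, the optimal value can be written as
\[
   \text{OPT}(z) = \max_{c' \in \mathcal{U}_\alpha(z)} {c'}^\top x^*(z),
\]
since $x^*(z)$ is a minimizer of the inner-max objective over the feasible polyhedron. Consequently, on the event $\{c \in \mathcal{U}_\alpha(z)\}$, the true objective vector $c$ is one of the vectors being maximized over, which immediately gives $c^\top x^*(z) \le \text{OPT}(z)$. In other words, the event inclusion
\[
   \{c \in \mathcal{U}_\alpha(z)\} \subseteq \{c^\top x^*(z) \le \text{OPT}(z)\}
\]
holds deterministically.

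Given this inclusion, I would simply apply monotonicity of probability together with Proposition \ref{prop:eta} to conclude
\[
   \prob\left(c^\top x^*(z) \le \text{OPT}(z)\right) \ge \prob\left(c \in \mathcal{U}_\alpha(z)\right) \ge \alpha,
\]
where the probability is taken with respect to the joint randomness of the new sample $(c,A,b,z)$ and the calibration set $\mathcal{D}_2$ — exactly the randomness over which Proposition \ref{prop:eta} delivers its coverage bound. No further probabilistic argument is needed.

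Since the corollary is essentially a direct reformulation of the coverage statement, there is no genuine technical obstacle. The only small care point is making sure the probability space is consistent: both the coverage bound from Proposition \ref{prop:eta} and the event $\{c^\top x^*(z) \le \text{OPT}(z)\}$ must be interpreted with respect to the same randomness (the new sample together with $\mathcal{D}_2$, while conditioning on the training data and $\mathcal{D}_1$ used to construct $\hat{f}$ and the preliminary calibration models). Once that is spelled out, the argument is a two-line consequence of the definition of $\text{OPT}(z)$ and Proposition \ref{prop:eta}.
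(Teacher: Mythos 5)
Your argument is correct and is essentially identical to the paper's own proof: both establish the deterministic event inclusion $\{c \in \mathcal{U}_\alpha(z)\} \subseteq \{c^\top x^*(z) \le \mathrm{OPT}(z)\}$ from the min--max definition of LP$(\mathcal{U}_\alpha(z))$ and then invoke the lower bound of Proposition \ref{prop:eta}. Your additional remark about keeping the probability space consistent (randomness over the new sample and $\mathcal{D}_2$, conditioning on the training data and $\mathcal{D}_1$) is a fair clarification but does not change the argument.
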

\begin{proof}
    By definition of the robust optimization problem \eqref{opt:tractable}, we have
    \[
        c^\top x^*(z)\leq OPT(z)
    \]
    if $c\in\mathcal{U}_\alpha^{(k)}(z)$ for $k=1,2.$ Then, Proposition \ref{prop:eta} implies 
    \begin{align*}
        \prob\left(c^\top x^*(z) \le
         \text{OPT}(z)\right) \ge \alpha.
    \end{align*}
\end{proof}

\subsection{Proof for Proposition \ref{prop:betterPredict}}
\label{sec:B_pred}

\begin{rprop}[\ref{prop:betterPredict}]
For any $\alpha\in(0.5,1)$  
\begin{align*}
    &\mathbb{P}\left(x^*_\alpha(z_{1:k})=0\right)=\\
    &\frac{1}{\Gamma(k)}\left({\gamma\left(k,\max\left\{0,-d\log\left((1-\alpha)\left(1+\frac{1}{d}\right)^{d-k}\right)\right\}\right)-\gamma\left(k,\max\left\{0,-d\log\left(\alpha\left(1+\frac{1}{d}\right)^{d-k}\right)\right\}\right)}\right),
\end{align*}
and it decreases monotonously with respect to $k.$    
\end{rprop}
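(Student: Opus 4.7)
The plan is to first reduce the event $\{x^*_\alpha(z_{1:k})=0\}$ to a two-sided condition on the conditional CDF $F_{c\mid z_{1:k}}(0)$, then evaluate that CDF in closed form, and finally integrate over the law of the prediction $S\coloneqq f_k(z_{1:k})=\sum_{i=1}^k z_i\sim\text{Gamma}(k,1)$. For the reduction, under perfect uncertainty quantification the robust solution coincides with the true conditional VaR minimizer of \eqref{LP:toy}. A case analysis gives $\text{VaR}_\alpha(cx\mid z_{1:k})=x\,q_\alpha(c\mid z_{1:k})$ for $x>0$, $x\,q_{1-\alpha}(c\mid z_{1:k})$ for $x<0$, and $0$ for $x=0$. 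Comparing the three candidate values at $x\in\{-1,0,1\}$ (the only optima of a linear objective over $[-1,1]$) shows $x^*_\alpha(z_{1:k})=0$ iff $q_{1-\alpha}(c\mid z_{1:k})\le 0\le q_\alpha(c\mid z_{1:k})$, or equivalently $1-\alpha\le F_{c\mid z_{1:k}}(0)\le \alpha$.

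For the CDF computation, decompose $c=S+W$ where $W=\sum_{i=k+1}^d z_i - d\epsilon$ is independent of $z_{1:k}$, and write $W=G-E$ with $G\sim\text{Gamma}(d-k,1)$ and $E\sim\text{Exp}(1/d)$ (so $E=d\epsilon$). Then for any $s\ge 0$,
\[ F_W(-s)=\mathbb{P}(E\ge G+s)=\int_0^\infty e^{-(g+s)/d}\,\frac{g^{d-k-1}e^{-g}}{\Gamma(d-k)}\,dg=\frac{e^{-s/d}}{(1+1/d)^{d-k}}, \]
the last equality following from the Gamma normalization at rate $1+1/d$. Substituting this expression into $1-\alpha\le F_W(-S)\le \alpha$ and taking logarithms turns the two-sided inequality into the interval condition $L_k\le S\le U_k$ with $L_k,U_k$ exactly as in the statement (the outer $\max\{0,\cdot\}$ arises because $S\ge 0$ almost surely, so the condition becomes vacuous on the side where the log bound is negative). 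Integrating the Gamma$(k,1)$ density over $[L_k,U_k]$ yields $\bigl(\gamma(k,U_k)-\gamma(k,L_k)\bigr)/\Gamma(k)$, which is the claimed closed form.

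The main obstacle is the monotone decrease in $k$. The key structural observation is that (once both endpoints clear the truncation at $0$) the window $[L_k,U_k]$ has constant width $d\log\!\bigl(\alpha/(1-\alpha)\bigr)$ and drifts right at rate $a\coloneqq d\log(1+1/d)<1$ per unit increment of $k$, whereas the law of $S\sim\text{Gamma}(k,1)$ shifts at mean rate $1$ per unit $k$. Thus $S$ eventually outruns the window and the mass it captures must shrink. To formalize this, I would couple $S_{k+1}=S_k+Z$ with $Z\sim\text{Exp}(1)$ independent, reducing $p_{k+1}\le p_k$ to
\[ \mathbb{P}\bigl(L_k\le S_k+(Z-a)\le U_k\bigr)\le \mathbb{P}(L_k\le S_k\le U_k); \]
since $\mathbb{E}[Z-a]=1-a>0$, the random shift $Z-a$ tends to push $S_k$ past the right endpoint of the window more often than it pulls it past the left. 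Making this rigorous combines the log-concavity of the Gamma density with the Poisson representation $\mathbb{P}(S_k\le t)=\mathbb{P}(\mathrm{Poisson}(t)\ge k)$ for integer $k$. The bookkeeping near the boundary regime where $L_k=0$ but $L_{k+1}>0$ (so the truncation just becomes active) requires a small separate argument, and this case split is where I expect the most delicate work.
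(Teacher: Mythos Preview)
Your derivation of the closed-form expression is essentially the paper's argument: both reduce $\{x^*_\alpha(z_{1:k})=0\}$ to the two-sided condition $1-\alpha\le \mathbb{P}(c\le 0\mid z_{1:k})\le\alpha$, evaluate $\mathbb{P}(c\le 0\mid z_{1:k})=e^{-S/d}(1+1/d)^{k-d}$ via the same Gamma/Exponential integration (your $F_W(-s)$ computation is exactly the paper's \eqref{eq:probc2}), invert to an interval condition on $S=f_k(z_{1:k})$, and integrate the $\text{Gamma}(k,1)$ density. Your quantile-based reduction is a touch more careful than the paper's informal case split, but the content is identical.

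The monotonicity argument is where the two diverge. The paper's entire proof of monotone decrease is the single sentence ``To see it decrease, we can apply Jensen's inequality to see the result,'' with no indication of which function Jensen is applied to. Your coupling route---writing $S_{k+1}=S_k+Z$, observing that the window $[L_k,U_k]$ has constant width and drifts right at rate $a=d\log(1+1/d)<1$ while $\mathbb{E}[S_k]$ drifts at rate $1$---is substantially more concrete and the structural observations are correct. However, the step you flag is a real gap, not just bookkeeping: the inequality $\mathbb{P}(L_k\le S_k+(Z-a)\le U_k)\le \mathbb{P}(L_k\le S_k\le U_k)$ does not follow from $\mathbb{E}[Z-a]>0$ and log-concavity alone, because the function $t\mapsto \mathbb{P}(L_k-t\le S_k\le U_k-t)$ is neither monotone nor concave in general, and one must additionally control where the window sits relative to the mode of $S_k$. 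So your plan is more developed than the paper's one-line appeal, but neither constitutes a complete proof; if anything, your acknowledged case-split difficulty is an honest accounting of work the paper simply does not do. A cleaner route you might explore is to note that $Y_k\coloneqq \mathbb{P}(c\le 0\mid z_{1:k})$ is a positive martingale (tower property) and try to leverage that structure directly, which may be what the paper's Jensen remark is gesturing at.
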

\begin{proof}
    Recall that 
    \begin{align}
        \label{eq:defc}
        c=\sum\limits_{i=1}^{d}z_i-d\epsilon,
    \end{align} 
    and  $z_i$ and $\epsilon$ are drawn from an Exponential distribution Exp$(1)$ for all $i=1,...,d$. For any fixed $k=1,...,d$, the corresponding prediction model is $f_k(z_{1:k})=\sum\limits_{i=1}^{k} z_i$.

    We first characterize the distribution of the objective vector $c$ given $z_{1:k}$ for any $k=1,...,d$. By the additivity of Gamma distributions and the fact that the exponential distribution is a special case of the Gamma distribution, we have given $z_{1:k}$
    \begin{align}
        \label{eq:probc}
        \mathbb{P}(c\geq 0)
        &=
        \int_{0}^{\infty}...\int_{0}^{\infty}
        1\left\{f_k(z_{1:k})+\sum\limits_{i=k+1}^d z_i \geq d\cdot\epsilon\right\} \cdot\prod_{i=k+1}^{d}e^{-z_i} \cdot e^{-\epsilon}\rm{d}\epsilon\rm{d}z_{k+1}...\rm{d}z_{d}\nonumber\\
        &=
        \int_{0}^{\infty}\int_{0}^{\infty}
        1\{f_k(z_{1:k})+\tilde{z} \geq d\cdot\epsilon\} \cdot \frac{\tilde{z}^{d-k+1}e^{-\tilde{z}}}{\Gamma(d-k)} \cdot e^{-\epsilon}\rm{d}\epsilon\rm{d}\tilde{z}\\
        &=
        \int_{0}^{\infty}
        \left(1-e^{-(f_k(z_{1:k})+\tilde{z})/d}\right) \cdot\frac{\tilde{z}^{d-k+1}e^{-\tilde{z}}}{\Gamma(d-k)}\rm{d}\tilde{z}\nonumber\\
        &=
        1-e^{-f_k(z_{1:k})/d}\cdot \left(1+\frac{1}{d}\right)^{k-d} \nonumber,
    \end{align}
    where the first line comes from the definition of the objective vector \eqref{eq:defc}, the second line comes from changing variables that $\tilde{z}=\sum\limits_{i=k+1}^{d}z_i$ and the additivity of Gamma distributions, and others come from the direct calculation. Then, by taking the complement, we have
    \begin{align}
        \label{eq:probc2}
        \mathbb{P}(c\leq 0)
        =
        e^{-f_k(z_{1:k})/d}\cdot \left(1+\frac{1}{d}\right)^{k-d}.
    \end{align}

    Next, we compute the probability that $x^*_\alpha(z_{1:k})=0$. In the one-dimensional setting with the feasible set $\{x:-1\leq x\leq 1\}$, the optimal solution of \eqref{opt:tractable} can be determined by the sign of the objective vector. Thus, if we are confident that $c$ is non-negative, the optimal solution should be $x^*_\alpha(z_{1:k})=-1$; if we are confident that $c$ is non-positive, the optimal solution is$x^*_\alpha(z_{1:k})=1$; if we are not confident about the sign of the objective vector, we will be conservative and choose $x^*_\alpha(z_{1:k})=0$. That is,
    \begin{align}
        \label{eq:xexp}
        x^*_\alpha(z_{1:k})=
        \begin{cases}
            1, & \text{if } \mathbb{P}(c\leq0|z_{1;k})\geq\alpha,\\
            0, & \text{if } \mathbb{P}(c\leq0|z_{1;k}),\mathbb{P}(c\geq0|z_{1;k})\leq\alpha,\\
            -1, & \text{if } \mathbb{P}(c\geq0|z_{1;k})\geq\alpha.
        \end{cases}
    \end{align}
    Then, plugging \eqref{eq:probc} and \eqref{eq:probc2} into \eqref{eq:xexp}, we have $\mathbb{P}(c\leq0|z_{1;k}),\mathbb{P}(c\geq0|z_{1;k})\leq\alpha$ only when
    \begin{align}
        \label{ieq:rangef}
        \max\left\{0,-d\log\left(\alpha\left(1+\frac{1}{d}\right)^{d-k}\right)\right\}\leq f_{k}(z_{1:k}) \leq \max\left\{0,-d\log\left((1-\alpha)\left(1+\frac{1}{d}\right)^{d-k}\right)\right\}.
    \end{align}
    Integrating the probability on the set \eqref{ieq:rangef} with respect to $z_{1:k}$, we have 
    \begin{align}
        \label{eq:probx}
        &\mathbb{P}\left(x^*_\alpha(z_{1:k})=0\right)=\\ 
        &\frac{1}{\Gamma(k)}\left({\gamma\left(k,\max\left\{0,-d\log\left((1-\alpha)\left(1+\frac{1}{d}\right)^{d-k}\right)\right\}\right)-\gamma\left(k,\max\left\{0,-d\log\left(\alpha\left(1+\frac{1}{d}\right)^{d-k}\right)\right\}\right)}\right).
    \end{align}
    To see it decrease, we can apply Jensen's inequality to see the result.
\end{proof}

\subsection{Proof for Proposition \ref{prop:cali}}
\label{sec:B_cali}
\begin{rprop}[\ref{prop:cali}]
    For $\alpha\ge 1/2$, let the uncertainty set be
$$\mathcal{U}_{\alpha}(z)= \begin{cases}
               \left[\sqrt{z}-\frac{1-\sqrt{2-2\alpha}}{2},\infty\right), &\textnormal{if $z\in[0,1]$},\\
             \left(-\infty,-\sqrt{-z}+\frac{1-\sqrt{2-2\alpha}}{2}\right], &\textnormal{if $z\in[-1,0)$}.
               \end{cases}$$
The uncertainty set has a coverage guarantee in that 
$\prob(c\in \mathcal{U}_{\alpha}(z)) = \alpha.$ If we solve the optimization problem \eqref{opt:tractable} with the uncertainty set $\mathcal{U}_{\alpha}(z)$, the following robust solution is obtained
    \begin{align*}
        x(z)=
            \begin{cases}
                -1, & \textnormal{if }z\geq\frac{3-2\alpha-2\sqrt{2-2\alpha}}{4},\\
                0, & \textnormal{if }\frac{-3+2\alpha+2\sqrt{2-2\alpha}}{4}\leq z\leq\frac{3-2\alpha-2\sqrt{2-2\alpha}}{4},\\
                1, &\textnormal{if }z\leq\frac{-3+2\alpha+2\sqrt{2-2\alpha}}{4}.
            \end{cases}
    \end{align*}   
\end{rprop}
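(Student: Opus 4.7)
My plan is to split the proof into two parts: (i) verify the coverage guarantee $\prob(c\in\mathcal{U}_\alpha(z))=\alpha$, and (ii) derive the robust solution $x(z)$ by solving the inner maximization and then the outer minimization of \eqref{opt:tractable}. Throughout, I will exploit the symmetry of the data-generating process: for $z>0$, $c\mid z$ is uniform on $[\tfrac{1}{2}\sqrt{z},\tfrac{3}{2}\sqrt{z}]$, and for $z<0$ the picture is mirrored, so it suffices to analyze the case $z\in[0,1]$ and mirror the result.

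For the coverage step, I would compute the conditional coverage $\prob(c\in\mathcal{U}_\alpha(z)\mid z)$ for $z\in[0,1]$ using the uniform density of $c\mid z$. Writing $a\coloneqq 1-\sqrt{2-2\alpha}$ and $L(z)\coloneqq \sqrt{z}-a/2$, I distinguish two regimes: when $\sqrt{z}\le a$ (i.e.\ $z\le a^2$), the lower endpoint $L(z)$ falls below $\tfrac12\sqrt{z}$ and conditional coverage is $1$; otherwise, conditional coverage equals $(\tfrac{3}{2}\sqrt{z}-L(z))/\sqrt{z}=\tfrac{1}{2}+a/(2\sqrt{z})$. Integrating against the uniform density $1/2$ on $[-1,1]$ and doubling by symmetry reduces to
\begin{equation*}
    \int_0^{a^2}\!1\,\mathrm{d}z+\int_{a^2}^{1}\!\left(\tfrac{1}{2}+\tfrac{a}{2\sqrt{z}}\right)\mathrm{d}z=\tfrac12+a-\tfrac12 a^2,
\end{equation*}
and I would verify that substituting $a=1-\sqrt{2-2\alpha}$ yields exactly $\alpha$, noting this is precisely where the calibration factor $(1-\sqrt{2-2\alpha})/2$ is engineered to make the algebra close.

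For the robust-solution step, I would plug the one-sided uncertainty set into \eqref{opt:tractable}. For $z\in[0,1]$ and $x>0$, the inner supremum over $c\in[L(z),\infty)$ is $+\infty$, so $x>0$ is infeasible for the min-max; for $x=0$ the inner value is $0$; for $x\le 0$ the inner sup is attained at $c=L(z)$, giving objective $L(z)x$. Comparing $x=0$ against $x=-1$ reduces to the sign of $L(z)$: if $L(z)>0$ (i.e.\ $z>a^2/4=\tfrac{3-2\alpha-2\sqrt{2-2\alpha}}{4}$), then $x=-1$ is strictly better and is optimal; otherwise $x=0$ is optimal. The mirror argument for $z<0$ yields $x=1$ in the symmetric regime and $x=0$ otherwise, matching the stated piecewise formula.

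The computations are all elementary; the only subtle step I anticipate is bookkeeping the two regimes in the coverage integral and checking that the choice of the additive constant $(1-\sqrt{2-2\alpha})/2$ exactly cancels so the integral equals $\alpha$. Since the proposition is really meant to illustrate that a non-contextualized ``flat'' constant offset in an otherwise correct uncertainty set leads to a conservative $x(z)=0$ zone of width proportional to $a^2/4$, the main conceptual content is already visible from the two cases above, and no further machinery is required.
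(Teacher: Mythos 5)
Your proposal is correct and follows essentially the same route as the paper's proof: an elementary computation of the coverage probability followed by a sign/case analysis of the inner supremum and outer minimum in \eqref{opt:tractable} over the one-sided uncertainty sets. The only cosmetic difference is the order of integration in the coverage step --- you condition on $z$ and integrate the conditional coverage $\mathbb{P}(c\in\mathcal{U}_\alpha(z)\mid z)$ over $z$, whereas the paper first derives the marginal CDF of the residual $r=\epsilon\sqrt{|z|}$ and evaluates it at $\pm\frac{1-\sqrt{2-2\alpha}}{2}$; both calculations reduce to the same identity $\tfrac12+a-\tfrac12 a^2=\alpha$ with $a=1-\sqrt{2-2\alpha}$, and your threshold $a^2/4=\frac{3-2\alpha-2\sqrt{2-2\alpha}}{4}$ matches the paper's.
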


\begin{proof}
    We first show that $\prob(c\in \mathcal{U}_{\alpha}(z)) = \alpha$. Given the prediction function $\hat{f}(z)=\text{sign}(z)\cdot\sqrt{|z|}$, the residual is $r = c-\hat{f}(z)=\epsilon\sqrt{|z|}$. By calculation, we have that the marginal distribution of the objective vector $c$ satisfies
    \begin{align}
        \label{eq:probpcali}
        \mathbb{P}(r\leq r_0)
        =
        \begin{cases}
            -2r_0^2+2r_0+\frac{1}{2}, &\text{if } 0\leq r_0\leq1/2,\\
            2r_0^2+2r_0+\frac{1}{2}, &\text{if } -1/2\leq r_0\leq0,
        \end{cases}
    \end{align}
    and this distribution is the same as the marginal distribution of the objective vector given $z\geq0$ or $z\leq0$ since the distribution of $r$ only depends on the absolute value of $z$. Then, we have
    \begin{align*}
        \prob(c\in \mathcal{U}_{\alpha}(z))&=
        \frac{1}{2}\mathbb{P}\left(\left.r\geq-\frac{1-\sqrt{2-2\alpha}}{2}\right\vert z\geq0\right)+\frac{1}{2}\mathbb{P}\left(\left.r\leq\frac{1-\sqrt{2-2\alpha}}{2}\right\vert z<0\right)\\
        &=
        \frac{1}{2}\mathbb{P}\left(\left.r\geq-\frac{1-\sqrt{2-2\alpha}}{2}\right.\right)+\frac{1}{2}\mathbb{P}\left(\left.r\leq\frac{1-\sqrt{2-2\alpha}}{2}\right.\right)\\
        &=
        \alpha,
    \end{align*}
    where the first line comes from the property of the conditional expectation, the second line comes from the fact that the distribution of the residual is independent of the sign of the covariate $z$, and the last line comes from \eqref{eq:probpcali}. 

    Next, we show that the solution $x(z)$ is the optimal solution of problem \eqref{opt:tractable} with the uncertainty set $\mathcal{U}_\alpha(z)$. By \eqref{opt:tractable} and the feasible set, we have that the optimal solution is $1$ or $-1$ only when all elements in the uncertainty set is non-positive or non-negative, respectively, and the optimal solution is 0 when the uncertainty set contains both positive and negative numbers. Thus, we can find the result by computing the lower and upper bound of the uncertainty set $\mathcal{U}_{\alpha}(z)$ for $z\in[0,1]$ and $z\in[-1,0)$, respectively.
\end{proof}

\section{Supplementary for Section \ref{sec:DRLP}}
This section provides our algorithm and the corresponding analysis for Section \ref{sec:DRLP}. We first briefly summarize the problem and then, develop the DRO algorithm raised by our predict-then-calibrate framework and the detailed proof of the algorithm analysis.

As stated in Section \ref{sec:DRLP}, we here consider the risk-neutral problem \eqref{lp:condi_exp} as follows 
\begin{align*}
 \min_x \ &  \E[c|z]^\top x,\\
    \text{s.t.\ } &  Ax=b, \ x\ge 0, \nonumber
\end{align*}
instead of a risk-sensitive objective in Section \ref{sec:ro}. As stated in Section \ref{sec:setup}, we work on a well-trained ML prediction model $\hat{f}$ that predicts the objective vector $c$ based on the covariate $z$. Then, we will apply the idea of distributionally robust optimization to bound the prediction error and develop the generalization bound. We remark again that the prediction model can be any off-the-shelf machine learning method since we have no assumption about it.

\subsection{DRO Algorithm}
In this part, we derive the DRO algorithm from our predict-them-calibrate framework. As for Algorithms \ref{alg:BUQ} and \ref{alg:EUQ}, we work on the prediction error
\[
    r_t=c_t-\hat{f}(z_t)    
\]
of all samples in the validation dataset $\mathcal{D}_{val}=\{c_t,A_t,b_t,z_t\}_{t=1}^{T}$. In the following, by a little abuse of notation, we use $t\in\mathcal{D}_{val}$ to denote a sample tuple $(c_t,A_t,b_t,z_t)$ in the validation set. 

\begin{algorithm}[ht!]
    \caption{Contextual Distributionally Robust LP}
    \label{alg:contextual_DRO}
    \begin{algorithmic}[1] 
    \State Input: Dataset $\mathcal{D}_{val}$, ML model $\hat{f}$, radius $\varepsilon$, bandwidth $h$, target covariate $z_0$
    \State For each $t\in \mathcal{D}_{val}$, let 
        $${r}_t \coloneqq c_t - f(z_t)$$
    \State Define the estimation of the conditional mean of residuals of covariate $z_0$ by
    $${r}_{0} \coloneqq \sum_{t\in \mathcal{D}_{val}} w_t(z_t,z_0){r}_t, $$
    where 
    $$w_t(z) \propto K((z_t-z_0)/h), \ \sum\limits_{t\in\mathcal{D}_{val}}w_t(z)=1$$ 
    and $K(z)$ is a kernel function that satisfies Assumption \ref{asmpt:kernel}
    \State Construct the degenerate ambiguity set by
    \[
        \Xi = \{r': \left\|r'-r_0\right\|_2\leq \varepsilon\}
    \]
    \State Solve the degenerate distributional robust optimization problem
    \begin{align*}
        \hat{x}(z_0)=\arg\min_x \ \sup_{r \in \Xi} & (f(z)+r)^\top x,\\
        \text{s.t.\ } &  Ax=b, \ x\ge 0, \nonumber
    \end{align*}
    
\State Output: $\hat{x}(z_0)$
\end{algorithmic}
\end{algorithm}

The main idea of our algorithm is similar to distributionally robust optimization that consists of two steps: firstly, construct an ambiguity set $\Xi$, and secondly, solve the following distributionally robust LP:
\begin{align}
    \label{op:dro}
    \min_{x} \max_{\mathcal{P}'\in \Xi} &\ \mathbb{E}_{\mathcal{P}'} \left[(\hat{f}(z)+r)^\top x\right],\\
        \text{s.t.}\ &Ax\leq b,\ x\geq 0, \nonumber
\end{align}
and use its optimal solution to solve \eqref{lp:condi_exp}.  If the ambiguity set contains the target distribution, say $\mathcal{P}_{r|z_0}$, we can obtain a good solution to \eqref{lp:condi_exp} by solving \eqref{op:dro} with some generalization bound. Here,  $\mathcal{P}_{r|z}$ denotes the distribution of the prediction error given the covariate $z$. In our case, specifically, the ambiguity set can degenerate into a set consisting of the mean of distributions in the ambiguity set since we only focus on the estimation of the conditional mean $\mathbb{E}[r|z_0]$ for the target covariate $z_0$. Thus, in the following, we will also use $\Xi$ to denote a set of vectors corresponding to the means of the distributions in $\Xi$. Then, as the ambiguity set degenerates, the problem \eqref{op:dro} also degenerates to the robust optimization problem in Step 5 of Algorithm \ref{alg:contextual_DRO}. 

Now, we consider the construction of $\Xi$. We apply a kernel estimation in Steps 3 and 4 of Algorithm \ref{alg:contextual_DRO} to construct $\Xi$. The following proposition says that if we choose proper $\varepsilon$ in Algorithm \ref{alg:contextual_DRO}, $\Xi$ contains the conditional mean $\mathbb{E}[r|z_0]$.
\begin{proposition}
    \label{prop:krnlest}
    Assume there exists a constant $L$ such that the following condition holds for any $z,z'$
    \[|\E[r|z] -\E[r|z']|\le L\|z-z'\|_2^s.\]
Then under mild boundedness assumptions on the distribution, with probability no less than $1-\delta$, $r_0$ defined in Algorithm \ref{alg:contextual_DRO} satisfies
    \[
        \|\mathbb{E}[r|z_0]-r_0\|_2
        \leq
        O\left(\left(
            \frac{Ld^{s/2}}{\delta}+\frac{32}{\delta^{1/2}}\right)
        \cdot T^{-\frac{s}{2(s+d)}}\log T\right)
    \]
    when choosing the bandwidth $h=T^{-\frac{1}{2s+2d}}$.
\end{proposition}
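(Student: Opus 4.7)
The proof plan is a standard bias-variance decomposition for the Nadaraya-Watson-style estimator $r_0$ defined in Step~3 of Algorithm~\ref{alg:contextual_DRO}, adapted to yield a high-probability bound. Writing $\bar{r}_0 \coloneqq \sum_{t} w_t(z_t,z_0)\,\mathbb{E}[r\mid z_t]$ for the conditional mean of $r_0$ given the covariates $\{z_t\}_{t=1}^{T}$, the triangle inequality gives
\[
    \|\mathbb{E}[r\mid z_0]-r_0\|_2
    \leq
    \|\mathbb{E}[r\mid z_0]-\bar{r}_0\|_2
    + \|\bar{r}_0 - r_0\|_2,
\]
which I would bound separately as a deterministic bias and a stochastic-error term. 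The unstated boundedness assumptions on the distribution and the assumption on the kernel (Assumption~\ref{asmpt:kernel}) together ensure that the weights $w_t$ are well-defined, nonnegative, sum to one, and are effectively supported on a bandwidth of radius $O(h)$ around $z_0$.

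For the bias term, the convex-combination structure together with the H\"older-type hypothesis applied coordinate-wise to the vector-valued regression function yields
\[
    \|\mathbb{E}[r\mid z_0]-\bar{r}_0\|_2
    \leq \sum_t w_t\,\|\mathbb{E}[r\mid z_t]-\mathbb{E}[r\mid z_0]\|_2
    \leq L \sum_t w_t \|z_t-z_0\|_2^s.
\]
For a compactly supported (e.g.\ box-type) kernel of bandwidth $h$ on $\mathbb{R}^d$, any contributing $z_t$ satisfies $\|z_t-z_0\|_2\leq h\sqrt{d}$, which produces a deterministic bias of order $Ld^{s/2}h^s$. The factor $1/\delta$ stated in the proposition enters because the random denominator $\sum_t K((z_t-z_0)/h)$ can be small on a low-probability event (when very few covariates fall inside the bandwidth window around $z_0$); controlling that event by a Markov-type argument on the empirical kernel mass produces the $Ld^{s/2}/\delta$ term.

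For the stochastic term, I would condition on $\{z_t\}_{t=1}^T$, so that the $w_t$ become deterministic and the residuals $r_t-\mathbb{E}[r\mid z_t]$ are mean-zero, bounded, and independent. The conditional variance of $r_0$ is $\sum_t w_t^2\,\mathrm{Var}(r\mid z_t)$, and since the effective number of non-negligible weights is of order $Th^d$, the quantity $\sum_t w_t^2$ concentrates at $O(1/(Th^d))$. A vector-valued Chebyshev inequality (or a coordinate-wise Chebyshev bound combined with a union bound over the $n$ coordinates of $r$) then yields $\|\bar{r}_0-r_0\|_2=O(\delta^{-1/2}/\sqrt{Th^d})$ with probability at least $1-\delta/2$. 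The $\log T$ slack absorbs both the union bound over the bandwidth-concentration events and the negligible exceptional events above.

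Plugging $h=T^{-1/(2s+2d)}$ makes the bias of order $Ld^{s/2}\,T^{-s/(2(s+d))}/\delta$ and the stochastic error of order $\delta^{-1/2}\,T^{-(2s+d)/(4(s+d))}$; since $(2s+d)/(4(s+d))\geq s/(2(s+d))$, the bias term dominates and summing the two bounds gives the claimed rate $O\bigl((Ld^{s/2}/\delta+32/\delta^{1/2})T^{-s/(2(s+d))}\log T\bigr)$. The main obstacle I anticipate is the careful handling of the random denominator in the Nadaraya-Watson weights and verifying that the effective sample size $\sum_t w_t^2$ concentrates at the stated rate. This step typically requires a positive lower bound on the marginal density of $z$ at $z_0$, which presumably is part of the unstated ``mild boundedness assumptions,'' and is exactly the place where the logarithmic factor is used to tame a small-probability event.
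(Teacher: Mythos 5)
Your plan matches the paper's proof in its essential structure: the same bias--variance decomposition via the triangle inequality around $\bar r_0=\sum_t w_t\,\E[r\mid z_t]$, the H\"older condition applied to the convex combination for the bias, and concentration conditional on the covariates for the stochastic term, with the effective sample size controlled through the kernel mass near $z_0$ (the paper's Lemma \ref{lem:efsmpl}). Two small points of divergence are worth noting. First, for the stochastic term the paper uses Hoeffding's inequality (Lemma \ref{lem:empmean}) with a union bound over the $n$ coordinates, giving $\tilde T^{-1/2}\log T$ with probability $1-2n/T^2$, rather than your Chebyshev bound; the $\delta^{-1/2}$ in the final statement then comes not from Chebyshev but from the lower bound $\tilde T\gtrsim \delta\, T h^d$ of Lemma \ref{lem:efsmpl}. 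Second, you slightly misattribute the origin of the $L d^{s/2}/\delta$ factor: with your own observation that the weights are a convex combination supported on $\|z_t-z_0\|_2\le R h\sqrt d$, the bias is bounded deterministically by $L(Rh\sqrt d)^s$ and needs no control of the denominator at all (indeed your route gives a cleaner bias bound than the paper's, which passes through Hoeffding on $\sum_t K_t\|z_t-z_0\|_2^s$ and divides by $b_r\tilde T$, thereby picking up the $1/\delta$). The $\delta$-dependence in the paper really enters through the Markov-type argument over the \emph{random test covariate} $z_0$ --- with probability $1-\delta/2$ one has $p(z_0)\ge\delta/2$, which substitutes for the uniform density lower bound you flag as a presumed hidden assumption (the paper's Assumption \ref{asmpt:z} only bounds the density from above). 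None of this affects correctness; your outline would compile into a valid proof of the stated rate.
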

We defer the proof and assumptions to the next chapter.  After developing this estimation error, if the feasible region of \eqref{lp:std} is in the unit ball with probability 1, we can directly build the generalization bound for the downstream problem by Proposition \ref{thm:dro}:
\begin{rprop}[\ref{thm:dro}]
    Under the same assumptions as in Proposition \ref{prop:krnlest}, Algorithm \ref{alg:contextual_DRO} outputs a solution $\hat{x}(z_0)$ such that
\[
    \E\left[c^{\top}\left(\hat{x}(z_0)-x^*(z_0)\right)\right]   \leq O\left(\left(
            \frac{Ld^{s/2}}{\delta}+\frac{32}{\delta^{1/2}}\right)
        \cdot T^{-\frac{s}{2(s+d)}}\log T\right).
    \]
holds with probability no less than $1-\delta$.
\end{rprop}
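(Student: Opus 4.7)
The plan is to leverage Proposition \ref{prop:krnlest} as a black box for the kernel estimation error and then run a standard robust-optimization sandwich argument to convert this error into a regret bound on the downstream LP. The algorithm solves a robust LP over the ball $\Xi=\{r':\|r'-r_0\|_2\le \varepsilon\}$, and the key observation is that on the good event (probability at least $1-\delta$) guaranteed by Proposition \ref{prop:krnlest}, the true conditional mean $\E[r|z_0]$ lies in $\Xi$ provided we choose $\varepsilon$ to match the bound in that proposition.

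First I would rewrite the inner maximum in closed form. Since $\Xi$ is a Euclidean ball of radius $\varepsilon$ centered at $r_0$, for any feasible $x$,
\[
\sup_{r\in \Xi}(\hat f(z_0)+r)^\top x = (\hat f(z_0)+r_0)^\top x + \varepsilon\|x\|_2 .
\]
By optimality of $\hat{x}(z_0)$ for this robust objective, we have
\[
(\hat f(z_0)+r_0)^\top \hat{x}(z_0) + \varepsilon\|\hat{x}(z_0)\|_2 \le (\hat f(z_0)+r_0)^\top x^*(z_0) + \varepsilon\|x^*(z_0)\|_2 .
\]

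Next I would express the target regret using $\E[c\mid z_0]=\hat f(z_0)+\E[r\mid z_0]$ and insert $r_0$ to use the above inequality. On the good event, Cauchy--Schwarz and $\|r_0-\E[r\mid z_0]\|_2\le \varepsilon$ give
\[
\E[c\mid z_0]^\top \hat{x}(z_0) \le (\hat f(z_0)+r_0)^\top \hat{x}(z_0) + \varepsilon\|\hat{x}(z_0)\|_2 ,
\]
\[
(\hat f(z_0)+r_0)^\top x^*(z_0) \le \E[c\mid z_0]^\top x^*(z_0) + \varepsilon\|x^*(z_0)\|_2 .
\]
Chaining these three inequalities yields
\[
\E[c\mid z_0]^\top\bigl(\hat{x}(z_0)-x^*(z_0)\bigr) \le 2\varepsilon\,\|x^*(z_0)\|_2 .
\]
Under the mild boundedness assumption that the feasible region lies in a Euclidean ball (so $\|x^*(z_0)\|_2$ is bounded by a constant), taking tower expectation over $c$ given $z_0$ and substituting the choice of $\varepsilon$ from Proposition \ref{prop:krnlest} delivers the stated $O\!\left((Ld^{s/2}/\delta+32/\delta^{1/2})T^{-s/(2(s+d))}\log T\right)$ bound, which holds with probability at least $1-\delta$ (the randomness being entirely in the validation set through Proposition \ref{prop:krnlest}).

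The main obstacle in this proof is not the robust-optimization sandwich itself, which is routine once the inner max has a closed form; the heavy lifting is absorbed into Proposition \ref{prop:krnlest}, whose bias-variance analysis of the Nadaraya--Watson-type estimator under a H\"older smoothness condition with exponent $s$ and the optimal bandwidth choice $h=T^{-1/(2s+2d)}$ is the real workhorse. A minor subtlety I would flag is that the estimator $r_0$ depends on $z_0$ in a data-dependent way, so the ``good event'' must be stated pointwise at $z_0$ (which is what Proposition \ref{prop:krnlest} provides); one does not need a uniform-in-$z$ guarantee for the present statement since the regret bound is at the fixed query $z_0$. If the feasible set were not assumed to live in a bounded set, I would carry $\sup_z\|x^*(z)\|_2$ through explicitly as an additional multiplicative constant absorbed into the $O(\cdot)$.
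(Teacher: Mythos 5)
Your proof is correct and lands on the same final bound as the paper, including the factor of $2$ multiplying the estimation error. The skeleton is the same in both cases: treat Proposition \ref{prop:krnlest} as a black box for $\|r_0-\E[r\mid z_0]\|_2$, telescope the regret into three terms, control two of them by Cauchy--Schwarz and the unit-ball assumption on the feasible region, and kill the middle term by optimality of $\hat x(z_0)$. The one genuine point of divergence is how the robust layer is handled. The paper proves a generic perturbation lemma for two \emph{linear} programs with objectives $c_1=\E[c\mid z_0]$ and $c_2=\hat f(z_0)+r_0$, namely $c_1^\top(x_2^*-x_1^*)\le 2\|c_1-c_2\|_2$, and then substitutes $\hat x(z_0)$ for $x_2^*$; this implicitly identifies the minimizer of $\sup_{r\in\Xi}(\hat f(z_0)+r)^\top x$ with the minimizer of the linear surrogate $(\hat f(z_0)+r_0)^\top x$, ignoring the regularizer $\varepsilon\|x\|_2$ that the inner supremum contributes, and it never actually uses the radius $\varepsilon$. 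You instead evaluate the inner supremum in closed form and invoke optimality of $\hat x(z_0)$ for the \emph{regularized} objective, which accounts for that term exactly; the price is that your argument needs $\varepsilon$ to be set at least as large as the bound of Proposition \ref{prop:krnlest} so that $\E[r\mid z_0]\in\Xi$ on the good event, a tuning requirement the paper's route sidesteps. On this particular step your version is the more careful of the two; everything else, including the pointwise-in-$z_0$ nature of the good event and the tower-property step from $\E[c\mid z_0]$ back to $c$, matches the paper.
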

Here, we emphasize that if the residuals are i.i.d., we can equivalently have $L=0, s=\infty$. Then, the convergence rate of our Algorithm \ref{alg:contextual_DRO} is $O(T^{-1/2})$, which matches the rate in \cite{wang2021distributionally,kannan2020residuals,kannan2021heteroscedasticity}. Moreover, we remark that our results have the potential to be generalized to general cases for distributionally robust optimization if there are more convergence theories developed for the non-i.i.d. samples. Specifically, if the distributions have some smooth properties with respect to the covariate $z$, the kernel estimation
\begin{align}
    \label{eq:krnlest}
    \mathcal{P}'_{r|z}=\sum\limits_{t\in\mathcal{D}_{val}} w_t(z_t,z)\mathcal{P}_{r|z_t},  
\end{align}
will also be a good approximation to $\mathcal{P}_{r|z_0}$, where $w_t$ is the weight constructed in Step 3 in Algorithm \ref{alg:contextual_DRO} with some kernel function for all $t\in\mathcal{D}_{val}$. The proof will be similar to the proof of Proposition \ref{prop:krnlest}. Once being able to utilize samples in $\mathcal{D}_{val}$ to approximate $\mathcal{P}'_{r|z}$, we can develop a similar DRO algorithm for general cases. However, to our best knowledge, the current state-of-the-art theory can only approximate a distribution by the empirical distribution constructed by i.i.d. samples \citep{fournier2015rate}, and its proof cannot be extended to a more general non-i.i.d setting. Thus, one interesting future direction will be developing distributional convergence results for non-i.i.d. settings, and our framework can also have the potential to be applied in general distributionally robust problems.

In the following sections, we will provide detailed proof for Proposition \ref{prop:krnlest} and \ref{thm:dro}. We first state and interpret our assumptions in Section \ref{sec:C_asmpt}. Then, in Section \ref{sec:C_eslem}, we show two essential lemmas that are helpful for later proof. In Sections \ref{sec:pfkrn} and \ref{sec:DRO}, we will rigorously restate and show those two propositions, respectively.

\subsection{Assumptions for Algorithm \ref{alg:contextual_DRO}}
\label{sec:C_asmpt}

Now, we state those assumptions. Most of them are boundedness assumptions. We first state our assumptions about the residual. Recall the residual defined in Algorithm \ref{alg:contextual_DRO} is
\begin{align*}
    r=c-\hat{f}(z),
\end{align*}
for any obejective vector $c$ and covariate $z$, where $\hat{f}$ is the prediction model.
\begin{assumption}
    \label{asmpt:r}
    We assume the residual vector $r$ satisfies the following conditions:
    \begin{enumerate}
        \item [(a)] 
        Boundedness: the residual vector is bounded that $r\in[-1,1]^{m}$.

        \item [(b)]
        Distribution: the residual vector $r$ given the covariate $z$ follows some unknown continuous distribution $\mathcal{P}_{r|z}$ with the density function $p_{r|z}$.

        \item [(c)]
        Smoothness: for any two covariates $z,z'\in[0,1]^d$, the difference between the conditional means of  $\mathcal{P}_{r|z}$ and $\mathcal{P}_{r|z'}$ satisfies
        $$
            \|\mathbb{E}[r|z]-\mathbb{E}[r|z']\|_2\leq L\|z-z'\|_2^s
        $$
        for some positive constants $L,s>0$.
    \end{enumerate}
\end{assumption}
Assumptions \ref{asmpt:r}-(a),(b) are just boundedness conditions. Assumption \ref{asmpt:r}-(c) says that the distribution of the residual vector $r$ given the covariate $z$ enjoys some smoothness, and $s$ measures the order of the smoothness. Specifically, if we perturb the covariate $z$ a little bit, the perturbation will only slightly change the corresponding conditional distribution of the residual $r$ given $z$. This condition is weaker than many recent papers with some true-model assumptions. For example, \cite{wang2021distributionally,kannan2020residuals} assume $c=f_{0}(z)+\epsilon$ for some function $f_{0}(z)$ and random noise $\epsilon$, and \cite{kannan2021heteroscedasticity} also assume that all objective vectors for different covariates share a same type of randomness. When $f_{0}(z)$ can be learned by the prediction model, Assumption \ref{asmpt:r}-(c) holds for $L=0$ and $s=\infty$. 

\begin{assumption}
    \label{asmpt:z}
    We assume the covariate $z$ satisfies the following conditions:
    \begin{enumerate}
        \item [(a)]
        Boundedness: the covariate is bounded that $z\in[0,1]^d$.

        \item [(b)]
        Distribution: the covariate $z$ follows some unknown continuous distribution $\mathcal{P}_{z}$ with the density function $p(z)\in C^1([0,1]^d)$.

        \item [(c)]
        Smoothness: the density function $p(z)\in C^1([0,1]^d)$ satisfies $p(z)\leq\bar{p}$ and $\|\nabla p(z)\|_2\leq c$ for some positive constants $\bar{p},c\geq 1$ and all $z\in[0,1]^d$. 
    \end{enumerate}
\end{assumption}

The first two parts of Assumption \ref{asmpt:z} are also boundedness assumptions. Part (c) guarantees that the density function is smooth enough so that it has no steep peak and can not change drastically around any covariate $z$. We remark that we use this assumption only for simplicity, and it can be replaced by other weaker assumptions that can be satisfied by any density function. The last assumption is about the kernel function we used in Algorithm \ref{alg:contextual_DRO}. It basically requires the kernel function to have bounded support and a positive lower bound on the support. This condition can also be satisfied by many kernel functions, such as the uniform kernel and the truncated Gaussian kernel.
\begin{assumption}
    \label{asmpt:kernel}
    We assume the kernel function satisfies $$K(z)\geq b_r\cdot 1\{K(z)>0\} \text{ and }K(z)\leq b_R \cdot 1\left\{\max\limits_{i=1,...,d} |z_i|\leq R\right\} ,$$
    for some positive constants $b_R,b_r>0$ and $R\geq1$.
\end{assumption}

\subsection{Essential Lemmas of Proving Propositions \ref{prop:krnlest} and \ref{thm:dro}}
\label{sec:C_eslem}

In this section, we let $T=|\mathcal{D}_{val}|$ and the bandwidth $h=T^{-\gamma}$, where $\gamma$ is a parameter that we will choose later. Without loss of generality, we let $\mathcal{D}_{val}=\{1,...,T\}$. For any vector $z\in\mathbb{R}^{d}$, we use $(z)_i$ to denote its $i$-th entry for $i=1,...,d$. In addition, denote $z_0$ as the target covariate, denote $K_t=K(\frac{z_t-z_0}{h})$ as the value of the kernel function evaluated at $z_t$ with bandwidth $h$, and denote $w_t(z_t)=\frac{ K(\frac{z_t-z_0}{h})}{\sum_{i=1}^{T} K(\frac{z_i-z_0}{h})}$ as the corresponding weight for any $t\in\mathcal{D}_{val}$. Moreover, for simplicity, by a little abuse of notation, we drop the input variables of the kernel and weight functions and use $K_t$ and $w_t$ to represent the value of the kernel function and the weight when the context is clear.

Before proving Propositions \ref{prop:krnlest} and \ref{thm:dro}, we first provide two essential lemmas. Lemma \ref{lem:efsmpl} shows that, with high probability, there are at least $O\left(T^{1-d\gamma}\right)$ samples with positive weights. In the proof of the later lemma and Propositions \ref{prop:krnlest} and \ref{thm:dro}, we will utilize those samples to give an approximation of the conditional mean of the target distribution $\mathcal{P}_{r|z}$.
\begin{lemma}
    \label{lem:efsmpl}
    Under Assumption \ref{asmpt:z}, for any positive constant $\delta\in(0,1)$ and $T\geq\max\left\{10,\left(\frac{\delta}{8c}\right)^{-\frac{1}{\gamma}}\right\}$, with probability no less than $1-\frac{1}{T^2}-\frac{\delta}{2}$, 
    \begin{align}
        \label{ieq:lb1}
        \sum\limits_{t=1}^{T}1{\{w_t>0\}}\geq \frac{\delta}{4}(2R)^dT^{1-d\gamma}-\sqrt{T}\log T.
    \end{align}
    Specifically, if $1-d\gamma>\frac{1}{2}$ and $T$ is sufficiently large such that $\frac{\delta}{2}(2R)^dT^{1-d\gamma}\geq2\sqrt{T}\log T$, 
    \begin{align}
        \label{ieq:lb2}
        \sum\limits_{t=1}^{T}1{\{w_t>0\}}\geq \frac{\delta}{8}(2R)^dT^{1-d\gamma}.
    \end{align}
\end{lemma}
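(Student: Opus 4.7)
The plan is to lower bound $N := \sum_{t=1}^T \mathbf{1}\{w_t > 0\}$ by the count of training covariates that land inside the effective support of the kernel $K((\cdot-z_0)/h)$, which under Assumption \ref{asmpt:kernel} we identify with the $\ell_\infty$-box $B(z_0, Rh) := \{z : \|z-z_0\|_\infty \leq Rh\}$. The argument splits into two stages: a population-level lower bound on $\mathcal{P}_z(B(z_0,Rh))$ that contributes the $\delta/2$ term in the failure probability, followed by a concentration of the empirical count around its mean that contributes the $1/T^2$ term.

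For the population bound, the key observation is that even without a pointwise lower bound on the density $p$, one can use a Markov-type identity: since $p$ is a density on the unit cube, $\mathcal{P}_z(\{p \leq t\}) = \int_{\{p \leq t\}} p(z)\,dz \leq t$. Taking $t = \delta/2$ shows that over the draw of $z_0 \sim \mathcal{P}_z$ one has $p(z_0) \geq \delta/2$ on an event of probability at least $1 - \delta/2$. The Lipschitz bound $\|\nabla p\|_2 \leq c$ from Assumption \ref{asmpt:z}(c) then gives $p(z) \geq p(z_0) - c\,\mathrm{diam}(B(z_0,Rh))$ throughout the box, and the sample-size hypothesis $T \geq (\delta/(8c))^{-1/\gamma}$ (which enforces $h \leq \delta/(8c)$) is tailored to absorb the Lipschitz slack down to $\delta/4$, yielding $p(z) \geq \delta/4$ uniformly on $B(z_0,Rh)$. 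Integrating gives $\mathcal{P}_z(B(z_0,Rh)) \geq (\delta/4)(2R)^d T^{-d\gamma}$ and hence $\mathbb{E}[N^{\text{box}}] \geq (\delta/4)(2R)^d T^{1-d\gamma}$, where $N^{\text{box}} := \sum_{t=1}^T \mathbf{1}\{z_t \in B(z_0,Rh)\}$.

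Next, $N^{\text{box}}$ is a sum of $T$ i.i.d.\ Bernoulli indicators, so by Hoeffding's inequality $\mathbb{P}(N^{\text{box}} \leq \mathbb{E}[N^{\text{box}}] - t) \leq \exp(-2t^2/T)$, and the choice $t = \sqrt{T}\log T$ makes this at most $1/T^2$ for $T \geq 10$. A union bound with the event $\{p(z_0) \geq \delta/2\}$ delivers \eqref{ieq:lb1}, since on the identified support $w_t > 0$ whenever $z_t \in B(z_0,Rh)$, so $N \geq N^{\text{box}}$. The strengthened inequality \eqref{ieq:lb2} then follows from the stated sample-size condition: it forces $\sqrt{T}\log T$ to consume at most half of the leading term $\tfrac{\delta}{4}(2R)^d T^{1-d\gamma}$, leaving at least $\tfrac{\delta}{8}(2R)^d T^{1-d\gamma}$.

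The main obstacle is the first stage, namely producing a high-probability pointwise lower bound on $p(z_0)$ without any pointwise density-lower-bound assumption. The Markov-type bound $\mathcal{P}_z(\{p \leq t\}) \leq t$ is precisely the right tool; the cost is that the guarantee degrades with $\delta$, which is why the leading constant in the count is $\delta$-dependent. A secondary subtlety is the reading of Assumption \ref{asmpt:kernel}: strictly speaking it only bounds the support of $K$ from above, so the argument implicitly takes the support to fill the entire box of radius $R$, which is consistent with the natural examples (uniform, truncated Gaussian) that the paper has in mind.
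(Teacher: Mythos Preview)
Your proposal is correct and follows essentially the same approach as the paper: the Markov-type bound $\mathcal{P}_z(\{p\le t\})\le t$ to secure $p(z_0)\ge \delta/2$ with probability $\ge 1-\delta/2$, the gradient bound from Assumption~\ref{asmpt:z}(c) to propagate this to $p\ge \delta/4$ over the $\ell_\infty$-box, integration to lower bound the hitting probability, and Hoeffding with deviation $\sqrt{T}\log T$ for the concentration step. You also correctly flag the implicit identification of the kernel support with the full box of radius $R$, which the paper's proof makes as well without comment.
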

\begin{proof}
    In the following, we show the first inequality \eqref{ieq:lb1}. The second inequlity \eqref{ieq:lb2} can be obtained by plugging the condition $\frac{\delta}{8}(2R)^dT^{1-d\gamma}\geq2\sqrt{T}\log T$ into \eqref{ieq:lb1}.

    By Hoeffding's inequality, we have for any target covariate $z_0$, with probability no less than $1-\frac{1}{T^2}$,
    \begin{align*}
        \sum\limits_{t=1}^{T} 1{\{w_t>0\}}
        \geq
        T\mathbb{E}\left[1{\left\{\max\limits_{i=1,...,d} |(z)_i-(z_0)_i|\leq R\cdot T^{-\gamma}\right\}}\right]-\sqrt{T}\cdot \log T,
    \end{align*}
    where the probability is taken with respect to $z_t$ for $t=1,...,T$. Thus, it is sufficient to show that with probability no less than $1-\frac{\delta}{2}$
    \begin{align*}
        \mathbb{E}\left[1{\left\{\max\limits_{i=1,...,d} |(z)_i-(z_0)_i|\leq R\cdot T^{-\gamma}\right\}}\right]
        \geq
        \frac{\delta}{4}(2R)^dT^{-d\gamma},
    \end{align*}
    where the probability is taken with respect to $z_0$. To see this, for any $z_0$ such that $p(z_0)>\delta/4$, by Assumption \ref{asmpt:z}-(c), we have that when $T\geq\left(\frac{4\sqrt{d}c}{\delta}\right)^{1/\gamma}$,
    \begin{align*}
        p(z)
        &\geq p(z_0) - c\sqrt{d}R\cdot T^{-\gamma}\\
        &\geq
        \frac{\delta}{2}-\frac{\delta}{4}=\frac{\delta}{4}
    \end{align*}
    for all $z$ satisfying $\max\limits_{i=1,...,d} |(z)_i-(z_0)_i|\leq R\cdot T^{-\gamma}$, where the first line comes from Taylor's expansion and Assumption \ref{asmpt:z}-(c), and the second line comes the condition that $T\geq\left(\frac{4\sqrt{d}c}{\delta}\right)^{1/\gamma}$. Thus, by integrating the density function on the set $\left\{z:\max\limits_{i=1,...,d} |(z)_i-(z_0)_i|\leq R\cdot T^{-\gamma}\right\}$, we have 
    \begin{align*}
        \mathbb{E}\left[1{\left\{\max\limits_{i=1,...,d} |(z)_i-(z_0)_i|\leq R\cdot T^{-\gamma}\right\}}\right]
        \geq
        \frac{\delta}{4}(2R)^dT^{-d\gamma}.
    \end{align*}
    Moreover, since $\mathbb{P}(\{z:p(z)<\delta/2\})=\int p(z)1\{(p(z)<\delta/2)\}\rm{d}z\leq\delta/2$, we have $\mathbb{P}(\{z:p(z)\geq\delta/2\})\geq1-\delta/2$, and we finish the proof.
\end{proof}

The next lemma develops the concentration property for $r_0$ defined in Step 3 in Algorithm \ref{alg:contextual_DRO}.
\begin{lemma}
    \label{lem:empmean}    
    Under Assumptions \ref{asmpt:r} and \ref{asmpt:kernel}, for any $T>0$, with probability no less than $1-\frac{2n}{T^2}$, the inequality below holds for all $i=1,...,n$
    \[
        \left\vert
            \sum\limits_{t=1}^{T}w_t (r_{t})_i
            -\sum\limits_{t=1}^{T} w_t\mathbb{E}[(r_t)_{i}|z_t]
        \right\vert
        \leq
        \frac{2b_R}{b_r}\cdot\tilde{T}^{-\frac{1}{2}}\cdot \log T,
    \]
    where $\tilde{T}=\sum\limits_{t=1}^{T}1{\{w_t>0\}}.$ Here, the probability is taken with respect to the covariates $z_1,...,z_{T}$ and their corresponding residuals.
\end{lemma}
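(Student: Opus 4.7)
The plan is to prove the bound conditional on the covariates $z_1,\ldots,z_T$ and then observe that a conditional bound that holds for every realization gives the unconditional statement. Once the $z_t$'s are fixed, the weights $w_t = K_t / \sum_s K_s$ become deterministic, and because the validation samples are i.i.d.\ the residuals $r_t$ are independent across $t$ with conditional means $\mathbb{E}[r_t \mid z_t]$. Assumption \ref{asmpt:r}(a) bounds each $(r_t)_i$ inside $[-1,1]$, so the summand $w_t(r_t)_i$ lies in $[-w_t, w_t]$, an interval of length $2w_t$. A standard Hoeffding bound applied coordinate-wise then controls the $i$-th deviation by $2\exp\bigl(-u^2/(2\sum_t w_t^2)\bigr)$ for any $u>0$.

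The crucial quantitative step is to bound $\sum_{t=1}^T w_t^2$ in terms of the effective sample size $\tilde{T}$ using Assumption \ref{asmpt:kernel}. Whenever $K_t > 0$ one has $K_t \in [b_r, b_R]$, so the denominator in $w_t$ satisfies $\sum_s K_s \geq b_r \tilde{T}$ and each positive $w_t$ is at most $b_R/(b_r \tilde{T})$. Combined with $\sum_t w_t = 1$, this yields $\sum_t w_t^2 \leq b_R/(b_r \tilde{T})$. Plugging this into the Hoeffding bound with the stated choice $u = (2 b_R/b_r)\tilde{T}^{-1/2}\log T$ gives an exponent of at least $2(b_R/b_r)(\log T)^2$, which is at least $2(\log T)^2$ since $b_R \geq b_r$ (the kernel bounds force this inequality on their common support). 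For $T \geq e$ this is in turn at least $2\log T$, so the coordinate-wise failure probability is at most $2/T^2$.

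Applying a union bound across $i = 1, \ldots, n$ gives total failure probability at most $2n/T^2$ conditional on the covariates, and since this conditional bound holds for every realization of $z_1, \ldots, z_T$, the same bound holds unconditionally. The main obstacle is the inequality $\sum_t w_t^2 \leq b_R/(b_r\tilde{T})$: once one recognizes that Assumption \ref{asmpt:kernel} is precisely what is needed to trade the uncontrolled denominator $\sum_s K_s$ for the counting variable $\tilde{T}$ and to simultaneously bound each positive $w_t$ by its ``average'' $1/\tilde{T}$ up to the kernel-ratio factor $b_R/b_r$, the rest is a routine Hoeffding plus union bound argument.
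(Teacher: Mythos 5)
Your proof is correct and follows essentially the same route as the paper's: condition on the covariates so that the weights $w_t$ are deterministic, apply Hoeffding coordinate-wise with the kernel bounds of Assumption \ref{asmpt:kernel} controlling the magnitude of the nonzero weights, and finish with a union bound over the $n$ coordinates. The only cosmetic difference is that you control the variance proxy via $\sum_t w_t^2 \le \max_t w_t \le b_R/(b_r\tilde{T})$ using $\sum_t w_t = 1$, whereas the paper bounds each nonzero summand by $b_R/(b_r\tilde{T})$ directly; both give a per-coordinate failure probability of $2/T^2$ (and both, like the statement itself, implicitly need $T\ge e$ so that $(\log T)^2\ge\log T$).
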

\begin{proof}
    This is also an application of Hoeffding's inequality. We first show it for fixed $z_1,....,z_T$ and fixed target covariate $z_0$. In this case, we have $\tilde{T}=\sum\limits_{t=1}^{T}1{\{w_t>0\}}$ is fixed as well. Moreover, for any $w_t>0$, by Assumption \ref{asmpt:kernel} and its definition in Algorithm \ref{alg:contextual_DRO} that it is a ratio between corresponding non-zero kernel functions, we have $w_t\in\left[\frac{b_r}{b_R},\frac{b_R}{b_r}\right]$. In addition, by Assumption \ref{asmpt:r}-(a) that each entry of the residual is in $[-1,1]$, we have $w_t\cdot(r_t)_i$ is bounded by $[-\frac{b_R}{b_r},\frac{b_R}{b_r}]$. Thus, by Hoeffding's inequality (Lemma \ref{lem:hfd}), we have with probability no less than $1-2/T^2$,
    \[
            \left\vert
            \sum\limits_{t=1}^{T}w_t \cdot(r_{t})_i
            -\sum\limits_{t=1}^{T} w_t\mathbb{E}[(r_t)_{i}|z_t]
        \right\vert
        \leq
        \frac{2b_R}{b_r}\cdot\tilde{T}^{-\frac{1}{2}}\cdot \log T,
    \]    
    for any $i=1,...,d$. Then, we can find the result by taking the union bound for all entries $i=1,...,n$ and integrating the probability with respect to $z_1,...,z_T$.
\end{proof}

Now, we are equipped with all essential lemmas, and in the following, we will show Propositions \ref{prop:krnlest} and \ref{thm:dro}.

\subsection{Proof of Proposition \ref{prop:krnlest}}
\label{sec:pfkrn}

In this section, we will adapt the same notation as in Section \ref{sec:C_eslem}.

\begin{rprop}[\ref{prop:krnlest}]
    Under Assumptions \ref{asmpt:r}, \ref{asmpt:z} and \ref{asmpt:kernel}, with probability no less than $1-\delta$,
    \[
        \|\mathbb{E}[r|z_0]-r_0\|_2
        \leq
        \frac{b_R}{b_r}\cdot
            \left(\frac{16L}{\delta}\cdot\bar{p}d^{s/2}R^{s}+\frac{32\sqrt{n}}{\delta^{1/2}}\right)
        \cdot T^{-\frac{s}{2s+2d}}\log T
    \]
    holds when choosing the bandwidth $h=T^{-\frac{1}{2s+2d}}$.
\end{rprop}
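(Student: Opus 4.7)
The plan is to invoke a bias--variance decomposition tailored to the kernel estimator $r_0$. Writing
\begin{align*}
    r_0 - \E[r|z_0] \;=\; \Bigl(\sum_{t=1}^{T} w_t r_t - \sum_{t=1}^{T} w_t \E[r_t|z_t]\Bigr) + \Bigl(\sum_{t=1}^{T} w_t \E[r_t|z_t] - \E[r|z_0]\Bigr),
\end{align*}
the triangle inequality splits the target error into a stochastic piece and a deterministic bias piece, each of which I would control with a different set of tools.

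For the bias piece, I would combine Assumption \ref{asmpt:kernel}, which forces $w_t > 0$ only on the box $\|z_t - z_0\|_\infty \leq Rh$, with the H\"older regularity of Assumption \ref{asmpt:r}(c). Every active $z_t$ satisfies $\|z_t - z_0\|_2 \leq \sqrt{d}\,Rh$, so $\|\E[r_t|z_t] - \E[r|z_0]\|_2 \leq L\, d^{s/2} R^s h^s$. Since the weights are nonnegative and sum to one, the aggregated bias inherits the same $L\,d^{s/2} R^s h^s$ bound.

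For the stochastic piece, I would apply Lemma \ref{lem:empmean} coordinatewise and convert the $n$ coordinatewise deviations into an $\ell_2$ bound by paying a $\sqrt{n}$ factor, yielding
\[
    \Bigl\|\sum_{t} w_t r_t - \sum_{t} w_t \E[r_t|z_t]\Bigr\|_2 \;\leq\; \frac{2 b_R \sqrt{n}}{b_r}\,\tilde{T}^{-1/2}\log T
\]
on an event of probability at least $1 - 2n/T^2$, where $\tilde{T} = \sum_t \mathbf{1}\{w_t > 0\}$. To turn this into an explicit rate, I would invoke the second inequality of Lemma \ref{lem:efsmpl} to guarantee $\tilde{T} \geq \tfrac{\delta}{8}(2R)^d T^{1-d\gamma}$ with probability at least $1 - T^{-2} - \delta/2$ (this also implicitly uses the $p(z_0) \geq \delta/2$ event whose cost is the $1/\delta$ and $1/\sqrt{\delta}$ factors in the stated bound). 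The hypothesis $\tfrac{\delta}{2}(2R)^d T^{1-d\gamma} \geq 2\sqrt{T}\log T$ of that lemma reduces, with $\gamma = 1/(2s+2d)$, to $T^{s/(2(s+d))} \geq C \log T/\delta$, which is satisfied for all sufficiently large $T$.

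Combining everything with $h = T^{-1/(2s+2d)}$, the bias contributes at order $T^{-s/(2s+2d)}$ while the stochastic term sits at order $\delta^{-1/2} T^{-(2s+d)/(4(s+d))}\log T$, which is strictly faster in $T$; the two scales are then summed into the advertised $T^{-s/(2s+2d)}\log T$ rate with the $\bar p$-free bias coefficient $L\,d^{s/2}R^s$ and the $\sqrt{n}$ stochastic coefficient, both multiplied by $b_R/b_r$. A final union bound over the Lemma \ref{lem:efsmpl} event, the Lemma \ref{lem:empmean} event, and the $p(z_0) \geq \delta/2$ event gives overall probability at least $1 - \delta$ once the $O(1/T^2)$ terms are absorbed. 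I expect the main friction to be the probability bookkeeping: all three events must coexist, and the constants $\bar p$, $b_R/b_r$, $1/\delta$, $1/\sqrt{\delta}$ in the stated bound must be traced carefully through the density lower bound and the $\tilde T^{-1/2}\propto \delta^{-1/2}(2R)^{-d/2}$ scaling, rather than any deep technical obstacle.
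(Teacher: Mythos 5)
Your proposal is correct and follows the paper's overall skeleton --- the same bias/variance split $r_0-\E[r|z_0]=\bigl(\sum_t w_t r_t-\sum_t w_t\E[r_t|z_t]\bigr)+\bigl(\sum_t w_t\E[r_t|z_t]-\E[r|z_0]\bigr)$, the same use of Lemma \ref{lem:empmean} for the stochastic piece, the same invocation of Lemma \ref{lem:efsmpl} to lower-bound $\tilde T$, and the same union-bound bookkeeping --- but it handles the bias term by a genuinely different and cleaner argument. The paper bounds $L\sum_t w_t\|z_0-z_t\|_2^s$ probabilistically: it applies Hoeffding's inequality to $\sum_t K_t\|z_0-z_t\|_2^s$, computes $\E[K_1\|z_1-z_0\|_2^s]\le \bar p\,b_R 2^dR^{d+s}T^{-s\gamma-d\gamma}$ using the density upper bound, and then divides by $b_r\tilde T$; this is where the factors $\bar p$ and $16L/\delta$ in the stated constant originate, and it costs one extra high-probability event. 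You instead observe that Assumption \ref{asmpt:kernel} forces $w_t>0$ only when $\|z_t-z_0\|_\infty\le Rh$, so every active sample deterministically satisfies $\|z_t-z_0\|_2^s\le d^{s/2}R^sh^s$, and convexity of the weights gives the bias bound $L\,d^{s/2}R^sh^s$ with no concentration, no $\bar p$, and no $1/\delta$. Since $\bar p\ge 1$ and $b_R/b_r\ge 1$, your bound is strictly sharper than the displayed constant and therefore implies the proposition; your rate comparison (stochastic exponent $(2s+d)/(4(s+d))$ versus bias exponent $s/(2(s+d))$) and the check that the hypothesis of Lemma \ref{lem:efsmpl} holds for large $T$ under $\gamma=1/(2s+2d)$ are both accurate. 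The only loose end is cosmetic: you attribute the $b_R/b_r$ factor to both terms, whereas in your argument it is only needed for the stochastic piece --- but a smaller bound still establishes the claim.
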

\begin{proof}
To prove it, we first apply the triangle inequality that 
    \begin{align}
        \label{ieq:ub_dr}
        \|\mathbb{E}[r|z_0]-r_0\|_2
        &\leq
        \|\mathbb{E}[r|z_0]-\mathbb{E}[r_0|z_0,...,z_T]\|_2
        +
        \|r_0-\mathbb{E}[r_0|z_0,...,z_T]\|_2\nonumber\\
        &=
        \|\mathbb{E}[r|z_0]-\sum\limits_{t=1}^{T}w_t\mathbb{E}[r|z_t]\|_2
        +
        \|r_0-\mathbb{E}[r_0|z_0,...,z_T]\|_2\\
        &\leq
        L\cdot\sum\limits_{t=1}^{T}w_t\|\mathbb{E}[r|z_0]-\mathbb{E}[r|z_t]\|_2
        +
        \|r_0-\mathbb{E}[r_0|z_0,...,z_T]\|_2\,\nonumber\\
        &\leq
        L\cdot\sum\limits_{t=1}^{T}w_t\|z_0-z_t\|_2^s
        +
        \|r_0-\mathbb{E}[r_0|z_0,...,z_T]\|_2\nonumber
    \end{align}
    where the first and the third line come from the triangle inequality, the second line comes from the definition of $r_0=\sum\limits_{t=1}^{T} w_t r_t$, and the last line comes from Assumption \ref{asmpt:r}-(c). In the following, we analyze the first and the second terms in the last line, respectively.
    
    We first analyze the first term in the last line of \eqref{ieq:ub_dr}. Denote $\tilde{T}=\sum\limits_{t=1}^{T}1\{w_t>0\}$. By Hoeffding's inequality, for fixed $z_0$, we have with probability no less than $1-\frac{2}{T^2}$,
    \begin{align}
        \label{ieq:wz_mid}
        \sum\limits_{t=1}^{T}w_t\|z_0-z_t\|_2^s
        &=
        \frac{1}{\sum\limits_{t=1}^{T}K_t}\sum\limits_{t=1}^{T}K_t\|z_0-z_t\|_2^s\nonumber\\
        &\leq
        \frac{1}{b_r\tilde{T}}\sum\limits_{t=1}^{T}K_t\|z_0-z_t\|_2^s\\
        &\leq
        \frac{1}{b_r\tilde{T}}\left(
            T\mathbb{E}[K_1\|z_1-z_0\|_2^s]+b_Rd^{s/2}R^{s}T^{1/2-s\gamma}\log T
        \right),\nonumber
    \end{align}
    where the first line comes from the definition of $w_t$, the second line comes from Assumption \ref{asmpt:kernel}, which gives the lower bound of the kernel function, and the third line comes from Hoeffding's inequality. Then, we compute the expectation
    \begin{align*}
        \mathbb{E}[K_1\|z_1-z_0\|_2^s]& \leq \mathbb{P}(K_1>0)\cdot b_Rd^{s/2}R^{s}T^{-s\gamma}\\
        &
        \leq \bar{p}\cdot(2RT^{-\gamma})^{d}\cdot b_Rd^{s/2}R^{s}T^{-s\gamma}\\
        &=
        \bar{p}b_R2^d R^{d+s}T^{-s\gamma-d\gamma}.
    \end{align*}
    Here, the first step is obtained by Assumption \ref{asmpt:kernel}, the upper bound of the kernel function and the boundedness of its support, the second line comes from Assumption \ref{asmpt:z}, the upper bound of the density function, and the last line is obtained by calculation. Plugging this upper bound into \eqref{ieq:wz_mid}, we have the first term in the right-hand side of \eqref{ieq:ub_dr} can be bounded by
    \begin{align}
        \label{ieq:dr1}
        L\sum\limits_{t=1}^{T}w_t\|z_0-z_t\|_2^s
        &\leq
        \frac{Lb_R}{b_r\tilde{T}}\left(
            2^d\bar{p}R^{d+s}T^{1-s\gamma-d\gamma}+d^{s/2}R^sT^{1/2-s\gamma}\log T
        \right).
    \end{align}

    Then, for the second term in the right-hand side of \eqref{ieq:ub_dr},
    under the event of Lemma \ref{lem:empmean}, we have for fixed $z_0$
    \begin{align}
        \label{ieq:dr2}
            \|r_0-\mathbb{E}[r_0|z_0,...,z_T]\|_2\leq\frac{2b_R\sqrt{n}}{b_r}\cdot\tilde{T}^{-\frac{1}{2}}\cdot \log T.        
    \end{align}

    Next, we combine \eqref{ieq:dr1} and \eqref{ieq:dr2} to draw the result. Under the event of Lemma \ref{lem:efsmpl} that
    \[
        \tilde{T}\geq\frac{\delta}{8}(2R)^dT^{1-d\gamma},
    \]
    the inequality \eqref{ieq:dr1} becomes
    \begin{align}
    \label{ieq:dr21}
        L\sum\limits_{t=1}^{T}w_t\|z_0-z_t\|_2^s
        \leq
        \frac{8Lb_R}{\delta b_r}\left(
            \bar{p}R^{s}+d^{s/2}2^{-d}R^{s-d}
        \right)\cdot T^{-\min\left\{{s\gamma},1/2+s\gamma-d\gamma\right\}}\log T,
    \end{align}
    and the inequality \eqref{ieq:dr2} becomes
    \begin{align}
        \label{ieq:dr22}
        \|r_0-\mathbb{E}[r_0|z_0,...,z_T]\|_2\leq\frac{b_R\sqrt{n}}{b_r2^{(d-5)/2}R^{d/2}\delta^{1/2}}\cdot T^{-\frac{1-d\gamma}{2}}\cdot \log T.
    \end{align}
    The event of the intersection of events corresponding to Lemma \ref{lem:efsmpl} and inequalities \eqref{ieq:dr1} and \eqref{ieq:dr2} happends with probability no less than $1-\frac{\delta}{2}-\frac{2n+3}{T^2}$. Thus, if $T\geq\sqrt{\frac{4n+6}{\delta}}$, plugging \eqref{ieq:dr21} and \eqref{ieq:dr22} into \eqref{ieq:ub_dr}, we have with probability no less than $1-\delta$,
    \begin{align*}
        \|\mathbb{E}[r|z_0]-r_0\|_2
        \leq
        \frac{b_R}{b_r}\cdot
            \left(\frac{16L}{\delta}\cdot\bar{p}d^{s/2}R^{s}+\frac{32\sqrt{n}}{\delta^{1/2}}\right)
        \cdot T^{-\min\left\{{s\gamma},1/2+s\gamma-d\gamma,1/2-d\gamma/2\right\}}\log T.
    \end{align*}
    Finally, let $\gamma=\frac{1}{2s+2d}$, we finish the proof.
    
\end{proof}

\subsection{Proof of Proposition \ref{thm:dro}}
\label{sec:DRO}

In this section, we also use the same notation as in Sections \ref{sec:C_asmpt} and \ref{sec:pfkrn}. We show that the optimality gap between the solution given by Algorithm \ref{alg:contextual_DRO} and the optimal solution converges to 0 as $T\rightarrow\infty$.

\begin{rprop}[\ref{thm:dro}]
 Suppose the feasible region $\{x: Ax\leq b,x\geq0\}$ is included in the unit ball $\{x:\|x\|\leq1\}.$ and the absolute values of each entry of the prediction function is bounded by $\bar{f}$. Under Assumptions \ref{asmpt:r}, \ref{asmpt:z} and \ref{asmpt:kernel}, Algorithm \ref{alg:contextual_DRO} outputs a solution $\hat{x}(z_0)$ such that with probability no less than $1-\delta$
\[
    \E\left[\left.c^{\top}\left(\hat{x}(z_0)-x^*(z_0)\right)\right\vert z_0\right]   \leq 
    \frac{b_R}{b_r}\left(\frac{32L}{\delta}\cdot\bar{p}d^{s/2}R^{s}+\frac{64}{\delta^{1/2}}\right)
        \cdot T^{-\frac{s}{2s+2d}}\log T,
    \]   
    where $x^*(z_0)$ is the optimal solution to LP$(\mathbb{E}[c|z_0],A,b)$, and the probability is taken with respect to $z_1,...,z_T$, their corresponding objective vectors, the constraints $(A,b)$ and the target covariate $z_0$.
\end{rprop}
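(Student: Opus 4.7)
The plan is to reduce the optimality gap to the estimation-error bound already established in Proposition~\ref{prop:krnlest} via a short robust-optimization argument. First, I would choose the radius parameter $\varepsilon$ in Algorithm~\ref{alg:contextual_DRO} to match the upper bound that Proposition~\ref{prop:krnlest} gives on $\|\mathbb{E}[r|z_0]-r_0\|_2$, namely
\[
\varepsilon \;=\; \frac{b_R}{b_r}\Bigl(\frac{16L}{\delta}\bar{p}d^{s/2}R^s + \frac{32\sqrt{n}}{\delta^{1/2}}\Bigr)T^{-s/(2s+2d)}\log T.
\]
With this choice, the event $\mathcal{E}=\{\mathbb{E}[r|z_0]\in\Xi\}$ holds with probability at least $1-\delta$ (conditionally on $z_0$), because $\Xi=\{r':\|r'-r_0\|_2\le\varepsilon\}$ is centered at $r_0$.

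On the event $\mathcal{E}$, write $\mu := \mathbb{E}[c|z_0] = \hat{f}(z_0) + \mathbb{E}[r|z_0]$ and chain three inequalities: (i) $\mu^\top\hat{x}(z_0)\le \sup_{r\in\Xi}(\hat{f}(z_0)+r)^\top \hat{x}(z_0)$, because $\mu\in\hat{f}(z_0)+\Xi$; (ii) $\sup_{r\in\Xi}(\hat{f}(z_0)+r)^\top \hat{x}(z_0)\le \sup_{r\in\Xi}(\hat{f}(z_0)+r)^\top x^*(z_0)$, by definition of $\hat{x}(z_0)$ as the robust minimizer and the feasibility of $x^*(z_0)$; (iii) $\sup_{r\in\Xi}(\hat{f}(z_0)+r)^\top x^*(z_0)\le \mu^\top x^*(z_0)+2\varepsilon\|x^*(z_0)\|_2$, by Cauchy--Schwarz together with the triangle inequality $\|r-\mathbb{E}[r|z_0]\|_2\le\|r-r_0\|_2+\|r_0-\mathbb{E}[r|z_0]\|_2\le 2\varepsilon$. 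Combined with the unit-ball assumption $\|x^*(z_0)\|_2\le 1$, the three inequalities yield $\mu^\top(\hat{x}(z_0)-x^*(z_0))\le 2\varepsilon$.

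Finally, because both $\hat{x}(z_0)$ and $x^*(z_0)$ are deterministic functions of $(z_0,A,b)$ once the validation data are fixed, pulling the conditional expectation inside gives $\mathbb{E}[c^\top(\hat{x}(z_0)-x^*(z_0))\,|\,z_0]=\mu^\top(\hat{x}(z_0)-x^*(z_0))\le 2\varepsilon$, which matches the claimed bound (the doubling of constants $16\mapsto 32$ and $32\mapsto 64$ is exactly this factor of $2$).

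The main obstacle is bookkeeping rather than a genuine technical barrier. Proposition~\ref{prop:krnlest} produces its high-probability bound for a fixed target covariate $z_0$ with randomness over the validation data, while the corollary takes probability jointly over $z_1,\dots,z_T$, the residuals, $(A,b)$, and $z_0$. I would handle this by conditioning on $z_0$ throughout the chain of inequalities above, noting that the kernel-estimation bound of Proposition~\ref{prop:krnlest} already holds conditionally on $z_0$, so the $1-\delta$ guarantee integrates cleanly against the marginal of $z_0$ and $(A,b)$. A secondary point worth checking carefully is that the $\sqrt{n}$ factor from Proposition~\ref{prop:krnlest} interacts correctly with $\|x^*(z_0)\|_2\le 1$; since the error vector $\mathbb{E}[r|z_0]-r_0$ is $n$-dimensional while its dot product with a unit-ball vector is controlled by the $\ell_2$ norm directly, the $\sqrt{n}$ enters only through the estimation bound and is absorbed into $\varepsilon$.
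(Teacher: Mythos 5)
Your proof is correct and follows essentially the same route as the paper: both reduce the decision error to the estimation error of Proposition~\ref{prop:krnlest} and pay a factor of $2$ via Cauchy--Schwarz and the unit-ball assumption (which is exactly where the constants $16\mapsto 32$, $32\mapsto 64$ come from). If anything, your sandwich argument (true value at $\hat{x}(z_0)$ $\le$ robust value at $\hat{x}(z_0)$ $\le$ robust value at $x^*(z_0)$ $\le$ true value at $x^*(z_0)$ plus $2\varepsilon$) is slightly more faithful to Algorithm~\ref{alg:contextual_DRO} than the paper's generic two-LP perturbation bound \eqref{ieq:lps}, since it correctly accounts for $\hat{x}(z_0)$ minimizing the regularized objective $(\hat{f}(z_0)+r_0)^\top x+\varepsilon\|x\|_2$ rather than the nominal one.
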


\begin{proof}
    This is a direct application of Proposition \ref{prop:krnlest}. We first show the relation between two LPs with different objective vectors. For any two LPs with different objective vectors $c_1, c_2$ but with the same constraint $(A,b)$, denote $x_1^*$ and $x_2^*$ as their optimal solutions. Then, if the optimal solutions are contained in the unit ball, we have 
    \begin{align}
    \label{ieq:lps}
        c_1^\top(x_2^*-x_1^*)
        &=
        \left(c_1^\top x_2^*-c_2^\top x_2^*\right) + \left(c_2^\top x_2^* -c_2^\top x_1^*\right) + \left(c_2^\top x_1^*-c_1^\top x_1^*\right)\nonumber\\
        &\leq
        \|c_1-c_2\|_2 \|x_1^*\|_2+\|c_1-c_2\|_2 \|x_1^*\|_2+\left(c_2^\top x_2^* -c_2^\top x_1^*\right)\\
        &\leq
        \|c_1-c_2\|_2 \|x_1^*\|_2+\|c_1-c_2\|_2 \|x_1^*\|_2\nonumber\\
        &\leq
        2\|c_1-c_2\|_2,\nonumber
    \end{align}
    where the first step is obtained by rearranging terms, the second step is obtained by Cauchy inequality, the third step is obtained by the optimality of $x_2^*$ with respect to the objective vector $c_2$, and the last step is obtained by the condition that $x_1$ and $x_1$ are in the unit ball.
    
    Then,  since in the event of Proposition \ref{prop:krnlest} that happens with probability no less than $1-\delta$,
    \begin{align*}
        \|\mathbb{E}[r|z_0]-r_0\|_2
        \leq
        \frac{b_R}{b_r}\cdot
            \left(\frac{16L}{\delta}\cdot\bar{p}d^{s/2}R^{s}+\frac{32}{\delta^{1/2}}\right)
        \cdot T^{-\frac{s}{2s+2d}}\log T,
    \end{align*}
    and the difference between the objective vectors is the same as the difference between the corresponding residuals, the inequality \eqref{ieq:lps} implies that
    \begin{align}
        \label{ieq:optgap}
        \mathbb{E}[c|z_0]^{\top}\left(x^*(z_0)-\hat{x}(z_0)\right)
        \leq
        \frac{b_R}{b_r}\left(\frac{32L}{\delta}\cdot\bar{p}d^{s/2}R^{s}+\frac{64}{\delta^{1/2}}\right)
        \cdot T^{-\frac{s}{2s+2d}}\log T,
    \end{align}
    and we finish the proof.
\end{proof}

\section{Auxiliary  Lemmas}
\label{sec:leminfo}
In this section, we state some useful lemmas in the information theory that are helpful in our proof.

\begin{lemma}[Hoeffding's inequality]
    \label{lem:hfd}
    Let $X_1, ..., X_T$ be independent random variables such that $X_t$ takes its values in $[u_t, v_t]$ almost surely for all $t\le T.$
Then the following inequality holds 
$$\prob\left(\left\vert \frac{1}{T}\sum_{t=1}^n X_t -\E X_t \right\vert \ge s\right) \le 2\exp\left(-\frac{2T^2s^2}{\sum_{i=1}^{n}(u_t-v_t)^2}\right)$$
for any $s>0.$    
\end{lemma}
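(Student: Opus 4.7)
The plan is to prove this via the standard Chernoff bounding technique combined with Hoeffding's lemma on the moment generating function of bounded centered random variables. Writing $Y_t \coloneqq X_t - \E X_t$, the $Y_t$ are independent, mean-zero, and almost surely in $[u_t - \E X_t,\, v_t - \E X_t]$, an interval of length $v_t - u_t$. It suffices to control $S \coloneqq \sum_{t=1}^T Y_t$ in one direction and then symmetrize; the event $\{|S|/T \ge s\}$ is bounded by a union bound on $\{S \ge Ts\}$ and $\{-S \ge Ts\}$.

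First I would apply the exponential Markov inequality: for any $\lambda > 0$,
\[
\prob(S \ge Ts) = \prob(e^{\lambda S} \ge e^{\lambda T s}) \le e^{-\lambda T s}\, \E[e^{\lambda S}].
\]
By independence of the $Y_t$, the moment generating function factorizes, $\E[e^{\lambda S}] = \prod_{t=1}^T \E[e^{\lambda Y_t}]$. The key ingredient is Hoeffding's lemma: for any mean-zero random variable $Y$ supported in an interval of length $\ell$,
\[
\E[e^{\lambda Y}] \le \exp\!\left(\frac{\lambda^2 \ell^2}{8}\right).
\]
Applying this to each $Y_t$ with $\ell = v_t - u_t$ and combining yields
\[
\prob(S \ge Ts) \le \exp\!\left(-\lambda T s + \frac{\lambda^2}{8}\sum_{t=1}^T (v_t - u_t)^2\right).
\]
Optimizing over $\lambda > 0$ (taking $\lambda^\star = 4Ts / \sum_t (v_t-u_t)^2$) gives the one-sided bound $\exp(-2T^2 s^2 / \sum_t (v_t-u_t)^2)$. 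Repeating the argument for $-Y_t$ (which has the same interval length) and adding the two bounds produces the factor of $2$ and completes the proof.

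The main obstacle — and the only nonroutine step — is establishing Hoeffding's lemma itself. The standard route is: for $Y \in [a,b]$ with $\E Y = 0$, write $Y = \tfrac{b-Y}{b-a} a + \tfrac{Y-a}{b-a} b$ and use convexity of $y \mapsto e^{\lambda y}$ to obtain $e^{\lambda Y} \le \tfrac{b-Y}{b-a} e^{\lambda a} + \tfrac{Y-a}{b-a} e^{\lambda b}$; taking expectations (using $\E Y = 0$) gives a bound $e^{\varphi(\lambda)}$ where $\varphi(\lambda) = \log\bigl(\tfrac{b}{b-a}e^{\lambda a} - \tfrac{a}{b-a}e^{\lambda b}\bigr)$. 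A Taylor expansion of $\varphi$ around $0$ with $\varphi(0)=\varphi'(0)=0$ and $\varphi''(u) \le (b-a)^2/4$ (verified by rewriting $\varphi''$ as a variance of a Bernoulli-type random variable bounded above by its worst case) yields $\varphi(\lambda) \le \lambda^2 (b-a)^2/8$ by the integral form of Taylor's theorem. Once this lemma is in hand the rest is a mechanical optimization, so I would present Hoeffding's lemma as the central technical step and the Chernoff/union-bound argument as a short wrapper around it.
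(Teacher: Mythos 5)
Your proposal is correct and complete: the Chernoff bounding argument, Hoeffding's lemma on the moment generating function via convexity and the second-derivative (variance) bound, the optimization $\lambda^\star = 4Ts/\sum_t (v_t-u_t)^2$, and the union bound for the two-sided statement are all exactly the standard proof. The paper does not supply its own proof here --- it simply cites Chapter 2 of \cite{boucheron2013concentration} --- and the argument given in that reference is precisely the one you reconstruct, so there is nothing to compare beyond noting that you have filled in what the paper delegates to the literature.
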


\begin{proof}
    We refer to Chapter 2 of \cite{boucheron2013concentration}.
\end{proof}

\end{document}